\newif\ifprivate
\renewcommand{\gitMark}{\jobname\,\textbullet{}\,\gitFirstTagDescribe\,\textbullet{}\,\gitAuthorName,\,\gitAuthorIsoDate}
\newcommand{\R}{\mathbb{R}}
\newcommand{\N}{\mathbb{N}}
\newcommand{\Z}{\mathbb{Z}}
\newcommand{\C}{\mathbb{C}}
\newcommand{\calG}{\mathcal{G}}
\DeclareMathOperator{\ind}{ind}
\DeclareMathOperator{\diag}{diag}
\DeclareMathOperator{\kernel}{\mathcal{K}}
\DeclareMathAlphabet{\mcal}{OMS}{cmsy}{m}{n}
\DeclarePairedDelimiter\set\{\}
\DeclarePairedDelimiterX{\setm}[2]{\{}{\}}{#1\,\delimsize\vert\,\mathopen{}#2}
\let\abs\relax
\DeclarePairedDelimiter{\abs}{\lvert}{\rvert}
\let\norm\relax
\DeclarePairedDelimiter{\norm}{\lVert}{\rVert}
\DeclarePairedDelimiter{\fractional}{\{}{\}}
\DeclarePairedDelimiter{\iverson}{\llbracket}{\rrbracket}
\DeclarePairedDelimiter{\floor}{\lfloor}{\rfloor}
\DeclarePairedDelimiter{\ceil}{\lceil}{\rceil}
\newcommand{\stirling}[2]{\genfrac{\{}{\}}{0pt}{}{#1}{#2}}
\newcommand{\twoldots}{\,.\,.\,}
\newcommand{\oeis}[1]{\cite[\href{https://oeis.org/#1}{#1}]{OEIS:2021}}
\newlength{\mb}
\newcommand{\downtosymb}[3][0em]{\setlength{\mb}{\widthof{#2}/2 - #1}%
\stackrel{\hspace*{-\mb}\stackrel{\text{#2}}{\downarrow}\hspace*{-\mb}}{#3}}
\newcommand{\downtoeq}[2][0em]{\downtosymb[#1]{#2}{=}}
\newcommand\undisp[1]{\bgroup\@displayfalse #1\egroup}
\newcommand{\tpmod}[1]{\ensuremath{\undisp{\pmod{#1}}}}
\newcommand{\tikzmark}[1]{\tikz[overlay,remember picture] \node (#1) {};}
\NewDocumentCommand{\DrawBox}{
  O{} 
  O{0em}
  O{0em}
  O{0em}
  O{0em}
  m 
  m 
  }{%
  \tikz[overlay,remember picture]{
    \draw[#1]
    ($(#6)+(-0.2em,0.8em)+(#5,#2)$) rectangle
    ($(#7)+(0.2em,-0.2em)+(#3,#4)$);}
}
\newcommand{\coolrightbrace}[2]{\mathclap{\left.\vphantom{\begin{matrix} #1 \end{matrix}}\right\}}\hspace{0.75em}#2}
\newtheorem{theorem}{Theorem}
\newtheorem{definition}{Definition}[section]
\newtheorem{lemma}[definition]{Lemma}
\newtheorem{corollary}[theorem]{Corollary}
\newtheorem{proposition}[definition]{Proposition}
\theoremstyle{remark}
\newenvironment{example}
    {\pushQED{\qed}\examplex}
    {\popQED\endexamplex}
\newenvironment{remark}
    {\pushQED{\qed}\remarkx}
    {\popQED\endremarkx}
\newcommand{\makeanchor}{\hyper@anchor{\@currentHref}}
\numberwithin{equation}{section}
\numberwithin{figure}{section}
\numberwithin{table}{section}
\begin{document}
\title{Asymptotic Analysis of $q$-Recursive Sequences}
\author{Clemens Heuberger, Daniel Krenn, Gabriel F.\ Lipnik}
\date{}

\maketitle

\begin{abstract}
  For an integer $q\ge2$, a $q$-recursive sequence is defined by recurrence
  relations on subsequences of indices modulo some powers of~$q$.
  In this article, $q$-recursive sequences are studied
  and the asymptotic behavior of their summatory functions is analyzed.
  It is shown that every $q$-recursive sequence is $q$-regular in
  the sense of Allouche and Shallit and that a $q$-linear representation of the
  sequence can be computed easily by using the coefficients from the recurrence relations.
  Detailed asymptotic results for $q$-recursive sequences are then
  obtained based on a general result on the asymptotic
  analysis of $q$-regular sequences.

  Three particular sequences are studied in detail:
  We discuss the asymptotic
  behavior of the summatory functions of\setlist{nolistsep}
  \begin{itemize}[noitemsep]
  \item Stern's diatomic sequence,
  \item the number of non-zero elements in some generalized Pascal's triangle
    and
  \item the number of unbordered factors in the Thue--Morse sequence.
  \end{itemize}
  For the first two sequences, our analysis even leads to precise formul\ae{} without error terms.
\end{abstract}

\vfill
{\footnotesize
  \begin{description}
  \item [Clemens Heuberger]
    \href{mailto:clemens.heuberger@aau.at}{\texttt{clemens.heuberger@aau.at}},
    \url{https://wwwu.aau.at/cheuberg},
    Alpen-Adria-Universität Klagenfurt, Austria
  \item [Daniel Krenn]
    \href{mailto:math@danielkrenn.at}{\texttt{math@danielkrenn.at}},
    \url{http://www.danielkrenn.at},
    Paris Lodron University of Salzburg, Austria
  \item [Gabriel F.\ Lipnik]
    \href{mailto:math@gabriellipnik.at}{\texttt{math@gabriellipnik.at}},
    \url{https://www.gabriellipnik.at},
    Graz University of Technology, Austria
  \item [Support]
    Clemens Heuberger and Daniel Krenn are supported by the
    Austrian Science Fund (FWF): P\,28466-N35. Gabriel F.\ Lipnik is supported by the Austrian Science Fund (FWF): W\,1230.
  \item [Acknowledgment]
    The authors thank Helmut Prodinger for
    drawing their attention to the counting function of unbordered
    factors in the Thue--Morse sequence.
  \item [2020 Mathematics Subject Classification]
    05A16; 
    11A63, 
    11B37, 
    30B50, 
    68Q45, 
    68R05, 
    68R15  
  \item[Key words and phrases]
    regular sequence,
    recurrence relation,
    digital function,
    summatory function,
    asymptotic analysis,
    Dirichlet series,
    Stern's diatomic sequence,
    Pascal's triangle,
    Thue--Morse sequence
  \end{description}}

\newpage
\tableofcontents
\newpage

\section{Introduction}
\label{chap:intro}

\paragraph{$q$-Recursive Sequences.}
We study a special class of recursively defined sequences, the
so-called \emph{$q$-recursive sequences}. Here $q$ is an integer and
at least $2$, and
$q$-recursive sequences are sequences which satisfy a specific type
of recurrence relation: Roughly speaking,
every subsequence whose indices run through a residue class modulo~$q^M$
is a linear combination of subsequences where for each of these subsequences, the indices run through a residue class modulo~$q^m$ for some $m < M$.

It turns out that this property is quite
natural and many combinatorial sequences are in fact $q$-recursive.
A simple nontrivial example of such a sequence%
\footnote{Throughout this paper, we let $\N$ denote the set of positive integers and write $\N_{0} \coloneqq \N\cup\set{0}$. Moreover, we write sequences in functional notation, i.e., we write a (complex-valued) sequence~$x$
  as a function~$x\colon\N_{0}\to\C$, and as a consequence, the $n$th element
  of the sequence is denoted by $x(n)$. In addition, we consider a vector of
  complex-valued sequences~$v = (x_{1}, \dots, x_{D})^{\top}$ as a
  vector-valued function~$v\colon\N_{0}\to\C^{D}$, and its evaluation is
  defined component-wise, i.e., $v(n) = (x_{1}(n),\dots,x_{D}(n))^{\top}$.}
is when $h(n)$ is the
largest power of $2$ less than or equal to~$n$; see~\oeis{A053644}.
Then we have $h(2n)=2 h(n)$ and $h(2n+1)=2 h(n)$ for $n\ge 1$ as well as $h(1)=1$, so
clearly $q=2$, and we set $M=1$ and $m=0$. This is because the
left-hand sides of the two recurrence relations contain $2^1n$
shifted by $0$ and $1$, and because the right-hand sides only contain
$2^0n$ (and no shifts). Another example are divide-and-conquer recurrences,
see~\cite[Equation~(1.2)]{Hwang-Janson-Tsai:2017:divide-conquer-half}.

\paragraph{$q$-Regular Sequences.}
The concept of $q$-recursive sequences is related to $q$-regular
sequences introduced by Allouche and
Shallit~\cite{Allouche-Shallit:1992:regular-sequences}. One definition
of a $q$-regular sequence $x$ is that every subsequence of the form
$x(q^jn+r)$ can be written as a linear combination of the same finite
number of sequences; see Section~\ref{sec:reg-seq} for more details
and a precise description. Again the sequence~$h$ (\oeis{A053644})
is an example, now for a $2$-regular sequence; it satisfies
$h(2^jn+r)=2^j h(n)$ for\footnote{By the
  definition of $q$-regular sequences as given in
  Section~\ref{sec:reg-seq}, when representing every subsequence of
  the form $h(2^jn+r)$ as a linear combination of the same finite
  number of sequences, this needs to hold for all $n\geq 0$.
  In particular, this needs to hold for
  $n = 0$, which is not fulfilled in the example. However, this is only a minor
  technical issue and can be fixed by adding some appropriate sequence
  to the linear combination. Details about this repair are provided in
  Theorem~\ref{thm:offset-correction}.}
$n \geq 1$, $j\ge 0$ and $0\le r<2^j$, so every $h(2^jn+r)$ can be written in terms of~$h(n)$.

Equivalently, every $q$-regular sequence
can be modeled by a $q$-linear representation. Here $x(n)$ is one
component of a vector $v(n)$, and there exist matrices~$A_r$ with
$v(qn+r)=A_r v(n)$ for all $0 \le r < q$ and~$n \geq 0$; see also
Section~\ref{sec:reg-seq}.

\paragraph{Linear Representation of $q$-Recursive Sequences.}

One main result of this paper is that every $q$-recursive sequence is
indeed $q$-regular; see Section~\ref{sec:rec-seq-main}. Even more can
be said: Theorem~\ref{thm:remark-2.1-general} provides an explicit
$q$-linear representation of the sequence.

In Section~\ref{sec:recursive-special}, we significantly
improve our results for a special case of $q$-recursive sequences,
namely where~$M=m+1$.

\paragraph{Asymptotics of $q$-Recursive Sequences.}

After exploring the concept of $q$-recursive sequences itself,
we investigate the asymptotic behavior of the summatory
functions of $q$-recursive sequences, i.e., sequences of partial sums.
There exist explicit results for many particular regular sequences and
also some quite general results.
Dumas~\cite{Dumas:2013:joint} as well as the first two authors
of this paper together with
Prodinger~\cite{Heuberger-Krenn-Prodinger:2018:pascal-rhombus, Heuberger-Krenn:2018:asy-regular-sequences}
studied the asymptotics of $q$-regular sequences in general. The two
works~\cite{Heuberger-Krenn-Prodinger:2018:pascal-rhombus, Heuberger-Krenn:2018:asy-regular-sequences}
will also be
one of the main ingredients for obtaining the asymptotic behavior of
$q$-recursive sequences. We present details in Section~\ref{sec:asymp}.
We investigate an important special case where the asymptotic behavior can be
directly determined from the $q$-recursive sequence without constructing the
representation as a $q$-regular sequence.

\paragraph{Explicit Precise Asymptotics for Three Particular Sequences.}

We also investigate three specific $q$-recursive sequences in-depth.
In particular, we derive
asymptotic results for their summatory functions as well as explain
and illustrate the connection between these results and the fact that
the sequences are $q$-recursive. To be more specific, we analyze
\begin{itemize}
\item Stern's diatomic sequence in Section~\ref{sec:stern-brocot},
\item the number of non-zero entries in a generalized Pascal's
  triangle in Section~\ref{sec:pascal}, and
\item the number of unbordered factors in the Thue--Morse sequence in Section~\ref{sec:ub}.
\end{itemize}
For the first two sequences, our analysis even leads to precise formul\ae{}
without error terms.

\paragraph{Proofs.}

We finally complete this paper by giving proofs of our
results; these are collected in Section~\ref{chap:proofs}.

\section{Brief Introduction to $q$-Regular Sequences}
\label{sec:reg-seq}

The concept of $q$-regular sequences\footnote{In the standard
  literature~\cite{Allouche-Shallit:1992:regular-sequences,
    Allouche-Shallit:2003:autom}, these sequences are called
  $k$-regular instead of $q$-regular.} was first introduced by Allouche and Shallit~\cite{Allouche-Shallit:1992:regular-sequences} in
1992, and a lot of research on them has
been done since then; see for
example Allouche and Shallit~\cite{Allouche-Shallit:2003:autom} and%
~\cite{Allouche-Shallit:2003:regular-sequences-2},
Bell~\cite{Bell:2005:regular-sequences-values},
and Coons and Spiegelhofer~\cite{Coons-Spiegelhofer:2018:number-theoretic-regular-sequences}.

The parameter~$q$ acts as a base (or radix); therefore the term
\emph{digital function} arises in context of such sequences.
We start by giving a definition;
see Allouche and Shallit~\cite{Allouche-Shallit:2003:autom}.

Let $q\geq 2$ be a fixed integer and $x\colon\N_{0}\to\C$ be a
sequence\footnote{The results given in Sections~\ref{sec:rec-seq-main}
  and~\ref{sec:recursive-special} are valid for sequences $x\colon \N_{0}\to R$,
  where~$R$ is a commutative Noetherian ring.  However, in places where we
  speak about the asymptotics of a sequence, we always consider complex-valued
  sequences.}.  Then~$x$ is called \emph{$q$-regular} if the complex vector
space generated by its $q$-kernel
\begin{equation*}
  \kernel_{q}(x)\coloneqq \setm[\big]{x\circ (n\mapsto q^{j}n +
    r)}{\text{integers }j\geq 0,
    0\leq r < q^{j}}
\end{equation*}
has finite dimension.
In other words, a sequence~$x$ is \emph{$q$-regular} if there are $\Delta\in\N_{0}$
and sequences $x_{1}$, $\dots$, $x_{\Delta}$ such that for every
$j\in\N_{0}$ and $0\leq r < q^{j}$ there exist $c_{1}$, $\dots$,
$c_{\Delta}\in\C$ with
\begin{equation*}
  x(q^{j}n + r) = \sum_{i=1}^{\Delta}c_{i}x_{i}(n)
\end{equation*}
for all $n \geq 0$.

By Allouche and
Shallit~\cite[Theorem~2.2]{Allouche-Shallit:1992:regular-sequences}, a
complex-valued sequence~$x$ is $q$-regular if and only if there exist a
vector-valued sequence~$v\colon \N_{0}\to \C^{D}$ for some $D\in\N$ whose first
component coincides with~$x$ and matrices $A_{0}$, \dots, $A_{q-1}$ such that
\begin{equation*}
  v(qn + r) = A_{r}v(n)
\end{equation*}
holds for all $0\leq r < q$ and $n \geq 0$.
If this is the case, the tuple $(A_{0},\dots,A_{q-1},v)$ is called
\emph{$q$-linear representation} of~$x$, and~$D$ is said to be its \emph{dimension}.

At this point, we note that a $q$-linear
representation~$(A_{0}, \dots, A_{q-1},v)$ of a sequence~$x$ immediately leads
to an explicit expression for~$x(n)$ by induction: Let $d_{L-1}\ldots d_{0}$
be the $q$-ary digit expansion of~$n$. Then we have
\begin{equation}
  \label{eq:xn-lin-rep}
  x(n) = e_{1}A_{d_{0}}\ldots A_{d_{L-1}}v(0),
\end{equation}
where $e_{1} = (1, 0, \ldots, 0)$. In particular, the $n$th element~$x(n)$ can
be computed in~$O(\log n)$ operations in $\C$.

The prototypical and probably best-known example of a $q$-regular sequence is the binary
sum of digits.

\begin{example}[Binary Sum of Digits~\oeis{A064547}]
  \label{ex:binary-sum-of-digits}
  For $n\in\N_{0}$, let $s(n)$ denote the number of ones in the binary
  expansion of~$n$. Then we clearly have
  \begin{equation}\label{eq:binary-sum-of-digits}
    s(2n) = s(n)\qq{and}
    s(2n + 1) = s(n) + 1
  \end{equation}
  for all $n \geq 0$ and $s(0)=0$. By induction we obtain
  \begin{equation*}
    s(2^{j}n + r) = s(n) + s(r)\cdot 1
  \end{equation*}
  for all $n \geq 0$, $j \geq 0$ and $0\leq r < 2^{j}$. This means that the complex
  vector space generated by the kernel~$\kernel_{q}(s)$ is also generated by~$s$ and
  $n\mapsto 1$ and thus, the sequences~$s$ is $2$-regular.

  A $2$-linear representation $(A_{0}, A_{1}, v)$ of~$s$ is given by
  \begin{equation*}
    A_{0} =
    \begin{pmatrix}
      1 & 0\\
      0 & 1
    \end{pmatrix},\quad
    A_{1} =
    \begin{pmatrix}
      1 & 1\\
      0 & 1
    \end{pmatrix}
    \qq{and}
    v =
    \begin{pmatrix}
      s\\
      n\mapsto 1
    \end{pmatrix},
  \end{equation*}
  where the corresponding recurrence relations $v(2n) = A_{0}v(n)$ as well as
  $v(2n + 1) = A_{1}v(n)$ can be checked easily by
  using~\eqref{eq:binary-sum-of-digits}.
\end{example}



\section{$q$-Recursive Sequences}
\label{chap:lin-rep}

In this section, we introduce the concept of $q$-recursive sequences,
investigate how linear representations of these sequences look like
and thereby conclude that these sequences are $q$-regular.
We will also investigate a more restricted set-up.
This special case
(as we will call it from now on in contrast to the general case)
is important; two instances will be discussed
in Sections~\ref{sec:pascal} and~\ref{sec:ub}.

\subsection{Definitions}
\label{sec:rec-seq-definition}
We start by giving the definition of our sequences of interest.

\begin{definition}[$q$-Recursive Sequence]
  \label{definition:q-reqursive-sequence}
  Let $q\geq 2$, $M > m \geq 0$, $\ell\leq u$ and $n_{0}\geq \max\set{-\ell/q^{m},0}$ be fixed
  integers. Let~$x$ be a sequence.

    If there are constants~$c_{s,k}\in\C$ for
    all~$0\leq s < q^{M}$ and $\ell\leq k\leq u$ such that
    \begin{equation}\label{eq:def-q-recursive}
      x(q^{M}n + s) = \smashoperator{\sum_{\ell\leq k\leq u}} c_{s,k}\,x(q^{m}n + k)
    \end{equation}
    holds for all $n\geq n_{0}$ and $0\le s<q^{M}$, then
    we say that the sequence~$x$
    is \emph{$q$-recursive with offset~$n_{0}$, exponents~$M$ and~$m$,
      index shift bounds~$\ell$ and~$u$, and coefficients~$(c_{s,k})_{0\le s<q^M,\ell\le k\le u}$}.
  \end{definition}

    We use the convention that
    if any of the parameters~$q$, $n_{0}$, $M$, $m$, $\ell$, $u$, $(c_{s,k})_{0\le s<q^M,\ell\le k\le u}$
    is not mentioned for a recursive sequence,
    then we assume that a value of this parameter exists
    such that the sequence is recursive with this value of the parameter.

  The sequence where~$h(n)$ is the largest power of $2$ less than or equal to~$n$
  (\oeis{A053644}), that was mentioned in the introduction, is indeed a
  $2$-recursive sequence with offset~$1$, as $h(2n)=2 h(n)$ and
  $h(2n+1)=2 h(n)$ hold for $n\ge 1$. On the other hand, the
  binary sum of digits as introduced in Example~\ref{ex:binary-sum-of-digits}
  does not directly\footnote{While the binary sum of digits~$s$ does not directly fit into the framework with $M=1$ and $m=0$, it is a
    $2$-recursive sequence with exponents $M = 2$ and $m = 1$, since the
    recurrence relations $s(4n)=s(2n)$, $s(4n + 1)=s(2n + 1)$,
    $s(4n + 2)=s(2n + 1)$ and $s(4n + 3)= -s(2n) + 2s(2n+1)$ follow
    from~\eqref{eq:binary-sum-of-digits} for all $n\geq 0$. This means that in
    this and similar cases we can get rid of the inhomogeneity by increasing the exponents.}
  fit into this framework, because the constant sequence appears on the
  right-hand side; see~\eqref{eq:binary-sum-of-digits}. For a discussion of
  such inhomogeneous $q$-recursive sequences we refer to
  Corollary~\ref{cor:inhomogeneities}.

Before considering a slightly more involved example, we clarify the role of the restriction on $n_0$.

\begin{remark}
    The condition $n_{0}\geq \max\set{-\ell/q^{m},0}$ in
    Definition~\ref{definition:q-reqursive-sequence} is necessary because for
    $n=n_0$,~\eqref{eq:def-q-recursive} reduces to
    \begin{equation*}
      x(q^{M}n_{0} + s) = \smashoperator{\sum_{\ell\leq k\leq u}} c_{s,k}\,x(q^{m}n_{0} + k),
    \end{equation*}
    and so the smallest argument of~$x$ on the right-hand side is
    $q^{m}n_{0} + \ell$, which is non-negative by the given condition and
    therefore indeed a valid argument.
\end{remark}

\begin{example}[Odd Entries in Pascal's Triangle~\oeis{A006046}]
  \label{ex:pascal-odd}
  Let $p(n)$ be the number of odd entries in the first~$n$ rows of Pascal's
  triangle. The first few elements are given in Table~\ref{tab:pascal-odd}.

  \begin{table}[htpb]
    \centering
    \begin{tabular}{c|cccccccccccccccccccccc}
      $n$ & $0$ & $1$ & $2$ & $3$ & $4$ & $5$ & $6$ & $7$ & $8$ & $9$ & $10$\\\hline
      $p(n)$ & $0$ & $1$ & $3$ & $5$ & $9$ & $11$ & $15$ & $19$ & $27$ & $29$ & $33$
    \end{tabular}
    \caption{First few elements of $p$}
    \label{tab:pascal-odd}
  \end{table}

  By Lucas' theorem on binomial coefficients modulo a prime,
  the number of odd entries in row~$n$ of Pascal's triangle is given
  by~$2^{s(n)}$, where $s(n)$ is the binary sum of digits of~$n$; see also Fine~\cite[Theorem~2]{Fine:1947:binomial-coefficients-modulo-p}. This
  implies that
  \begin{align*}
    p(2n) = \smashoperator{\sum_{0\leq k < 2n}}2^{s(k)}
    &= \smashoperator{\sum_{0\leq k < n}}2^{s(2k)} + \smashoperator{\sum_{0\leq k < n}}2^{s(2k + 1)}\\
    &\downtoeq{\eqref{eq:binary-sum-of-digits}} \smashoperator{\sum_{0\leq k < n}}2^{s(k)} + \smashoperator{\sum_{0\leq k < n}}2^{s(k) + 1}
      = p(n) + 2p(n) = 3p(n)
  \end{align*}
  as well as
  \begin{align*}
    p(2n + 1) = \smashoperator{\sum_{0\leq k < 2n + 1}}2^{s(k)}
    &= \smashoperator{\sum_{0\leq k \leq n}}2^{s(2k)} + \smashoperator{\sum_{0\leq k < n}}2^{s(2k + 1)}\\
    &\downtoeq{\eqref{eq:binary-sum-of-digits}} \smashoperator{\sum_{0\leq k \leq n}}2^{s(k)} + \smashoperator{\sum_{0\leq k < n}}2^{s(k) + 1}
    = p(n + 1) + 2p(n)
  \end{align*}
  hold for all $n \geq 0$.
  Thus, the sequence~$p$ is $2$-recursive with exponents $M=1$ and $m=0$,
  index shift bounds~$\ell=0$ and $u=1$, and offset~$n_{0}=0$.

From Allouche and Shallit~\cite[Example~14 and Theorem~3.1]{Allouche-Shallit:1992:regular-sequences}, we know that the sequence~$p$
is $2$-regular as well. This is no coincidence: In the following, we will show that each
$q$-recursive sequence is $q$-regular. Furthermore, if the recurrence relations
in~\eqref{eq:def-q-recursive} are known, we can even give an explicit
$q$-linear representation of~$x$.
\end{example}

In analogy to Definition~\ref{definition:q-reqursive-sequence}, we also introduce \emph{$q$-regular sequences
    with offset}.
  \begin{definition}[$q$-Regular Sequence with Offset]
    Let $q \geq 2$ and $n_0 \geq 0$ be fixed integers.
    A sequence~$x$ is said to be \emph{$q$-regular with
    offset~$n_{0}$} if there exist a vector-valued
  sequence~$v\colon \N_{0}\to \C^{D}$ for some $D\in\N$ whose first component
  coincides with~$x$ and matrices $A_{0}$, \dots, $A_{q-1}$ such that
  $v(qn + r) = A_{r}v(n)$ holds for all $0\leq r < q$ and $n \geq n_{0}$. In
  this case, we say that $(A_{0},\ldots, A_{q-1}, v)$ is a \emph{$q$-linear
    representation with offset~$n_{0}$} of~$x$.
\end{definition}

\begin{remark}
  A $q$-regular sequence with offset~$0$ is $q$-regular in the usual
  sense. Likewise, every $q$-linear representation with offset~$0$ is a
  $q$-linear representation as introduced in Section~\ref{sec:reg-seq}.
\end{remark}

\subsection{Reduction to $q$-Regular Sequences in the General Case}
\label{sec:rec-seq-main}

It turns out that every $q$-recursive sequence with any offset is indeed
$q$-regular (Corollary~\ref{corollary:main:is-q-regular}). This is an
implication of the following two results:
\begin{itemize}
\item Theorem~\ref{thm:remark-2.1-general} explicitly constructs a $q$-linear
  representation with offset~$n_{1}$ of $q$-recursive sequences with
  offset~$n_{0}$, where~$n_{1}\in\N$ is explicitly given. This means that such
  sequences are $q$-regular with offset~$n_{1}$.
\item Theorem~\ref{thm:offset-correction} states that every $q$-regular
  sequence with some offset is $q$-regular (without offset) as well. Also
  here, an explicit $q$-linear representation of~$x$ is given.
\end{itemize}

\begin{theorem}\label{thm:remark-2.1-general}
  Let $x$ be a $q$-recursive sequence with offset~$n_{0}$, exponents~$M$ and~$m$ and
  index shift bounds~$\ell$ and~$u$. Furthermore, set\footnote{We use Iverson's convention:
    For a statement~$S$, we set $\iverson{S} = 1$ if~$S$ is true and~$0$
    otherwise; see also Graham, Knuth and Patashnik~\cite[p.~24]{Graham-Knuth-Patashnik:1994}.}
  \begin{subequations}
    \label{eq:ell-and-u}
  \begin{align}
    \label{eq:ell}
    \ell' &\coloneqq \floor[\bigg]{\frac{(\ell+1) q^{M-m} - q^{M}}{q^{M-m}-1}} \iverson{\ell < 0}
    \intertext{and}
    \label{eq:u}u' &\coloneqq q^{m} - 1 + \ceil[\bigg]{\frac{uq^{M-m}}{q^{M-m}-1}} \iverson{u > 0}.
  \end{align}
  \end{subequations}
  Then $x$ is $q$-regular with offset~$n_{1} = n_{0} - \floor{\ell' /q^{M}}$, and a $q$-linear representation $(A_{0}, \dots,
  A_{q-1}, v)$ with offset~$n_{1}$ of~$x$ is
  given as follows:
  \begin{enumerate}[(a)]
    \item The vector-valued sequence~$v$ is given in block form by
    \begin{equation}
      \label{eq:recursive-blocks-of-v}
      v =
      \begin{pmatrix}
        v_{0}\\
        \vdots\\
        v_{M-1}
      \end{pmatrix},
    \end{equation}
    where the blocks are of the following form:
    For $0\leq j < m$, the block~$v_{j}$ has the form
    \begin{subequations}\label{eq:recursive-components-of-v}
      \begin{equation}
        \label{eq:recursive-components-of-v-1}
        v_{j} =
        \begin{pmatrix}
          x\circ(n\mapsto q^{j}n)\\
          \vdots\\
          x\circ(n\mapsto q^{j}n + q^{j} - 1)
        \end{pmatrix},
      \end{equation}
    and for $m\leq j < M$, the block~$v_{j}$ has the form\footnote{We
      set $x(n)=0$ for $n < 0$ to ensure that all blocks are well-defined.
      By the choice of $n_1$, this does not have any factual consequences.}
    \begin{equation}
      \label{eq:recursive-components-of-v-2}
      v_{j} =
      \begin{pmatrix}
        x\circ(n\mapsto q^{j}n + \ell')\\
        \vdots\\
        x\circ(n\mapsto q^{j}n + q^{j} - q^{m}  + u')
      \end{pmatrix}.
    \end{equation}
  \end{subequations}

\item The matrices $A_{0}$, \ldots, $A_{q-1}$ of
  the $q$-linear representation with offset~$n_{1}$ can be computed by
  using the coefficients in~\eqref{eq:def-q-recursive}; an explicit formula for
  the rows of these matrices is given
  in~\eqref{eq:Ar-explicitly}.
\end{enumerate}
  The linear representation $(A_{0}, \ldots, A_{q-1}, v)$ does not depend on the
  offset~$n_{0}$.
\end{theorem}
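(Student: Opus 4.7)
The plan is to verify $v(qn+r) = A_r v(n)$ block by block for all $0\le r<q$ and $n\ge n_1$, reading off the rows of the matrices $A_r$ from the verification. The definition of~$v$ in~\eqref{eq:recursive-blocks-of-v}--\eqref{eq:recursive-components-of-v} is designed around a single idea: the low blocks $v_0,\dots,v_{m-1}$ carry all shifts $x(q^j n+i)$ with $0\le i<q^j$, while the high blocks $v_m,\dots,v_{M-1}$ use the enlarged index range $[\ell',q^j-q^m+u']$ chosen precisely so that this range is stable under the rewriting step $q^M n+q^{M-1}r+i = q^M(n+t)+s$ followed by an application of the recurrence~\eqref{eq:def-q-recursive}.

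For $0\le j<M-1$ the analysis is purely combinatorial. The entries of $v_j(qn+r)$ have the form $x(q^{j+1}n+q^j r+i)$, and I would check that the shifted index $q^j r+i$ already lies in the admissible range of $v_{j+1}(n)$. If $j+1<m$, this is immediate from $0\le q^j r+i<q^{j+1}$; if $j+1=m$, the inequalities $\ell'\le 0$ and $u'\ge q^m-1$ (both direct consequences of~\eqref{eq:ell-and-u}) ensure that $v_m$ contains all of $[0,q^m)$; and if $m\le j+1<M$, the inequalities $\ell'\le q^j r+i\le q^{j+1}-q^m+u'$ follow from $r<q$ and from the range of~$i$. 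The rows of $A_r$ produced by these blocks are thus mere pointers containing a single~$1$ and make no use of the recurrence coefficients $c_{s,k}$.

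The only block that invokes the recurrence is $v_{M-1}$. An entry of $v_{M-1}(qn+r)$ is $x(q^M n+q^{M-1}r+i)$; writing $q^{M-1}r+i = q^M t+s$ with $0\le s<q^M$ turns this into $x(q^M(n+t)+s)$, and provided $n+t\ge n_0$, the recurrence expands it as $\sum_{\ell\le k\le u} c_{s,k}\,x(q^m n+q^m t+k)$. The smallest value of~$t$ that occurs arises at $i=\ell'$ and equals $\floor{\ell'/q^M}$, so requiring admissibility for every arising~$t$ is exactly the condition $n\ge n_1 = n_0-\floor{\ell'/q^M}$, which is where the offset $n_1$ comes from. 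If each shifted index $q^m t+k$ lies in $[\ell',u']$, every summand is a component of $v_m(n)$, and the corresponding row of $A_r$ is obtained by reading off the $c_{s,k}$, giving~\eqref{eq:Ar-explicitly}. Since the matrices are assembled only from these coefficients and from index arithmetic, they manifestly do not depend on~$n_0$.

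The main obstacle, and the source of the somewhat intricate formulas~\eqref{eq:ell} and~\eqref{eq:u}, is the index-closure claim $q^m t+k\in[\ell',u']$ for all admissible~$t$ and $k\in[\ell,u]$. Since $q^m t+\ell$ and $q^m t+u$ are monotone in~$t$, this reduces to the two scalar inequalities $q^m\floor{\ell'/q^M}+\ell\ge\ell'$ and $q^m\floor{(q^M-q^m+u')/q^M}+u\le u'$. I would dispatch each by splitting on the sign of $\ell$ (resp.~$u$): when $\ell\ge 0$, the Iverson factor in~\eqref{eq:ell} forces $\ell'=0$ and the inequality is trivial, while when $\ell<0$ the definition of~$\ell'$ implies $\ell'(q^{M-m}-1)\le q^{M-m}(\ell+1)-q^M$, which combined with the standard bound $\floor{x}>x-1$ gives $\ell'-q^m\floor{\ell'/q^M}<\ell+1$ and hence, by integrality, $\ell'-q^m\floor{\ell'/q^M}\le\ell$. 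A symmetric ceiling argument based on~\eqref{eq:u} handles the upper bound. These floor/ceiling manipulations are the technical heart of the proof, but they are routine once the geometry of the construction above has been made explicit.
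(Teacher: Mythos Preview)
Your proposal is correct and follows essentially the same approach as the paper: blocks $v_0,\dots,v_{M-2}$ yield pointer rows by direct index arithmetic (the paper's Parts~1 and~2), while block $v_{M-1}$ invokes the recurrence after a division-with-remainder step (Part~3), with the index-closure inequalities as the technical core. The paper isolates your floor/ceiling manipulations as a separate lemma and splits Part~3 into two explicit cases according to whether $\tilde r = q^{M-1}r + (d\bmod q^M)$ is below $q^M$ or not---your unified variable~$t$ simply absorbs this case distinction---but the argument is the same.
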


\begin{remark}
  \makeanchor
  \label{rem:main-thm}
  \begin{enumerate}
  \item We easily verify that $\ell'\le 0$ holds and it is clear that $u'\ge q^m-1 \geq 0$. Thus $\ell' \leq u'$. This implies that the blocks $v_j$ for $m\le j<M$ in~\eqref{eq:recursive-components-of-v-2} are indeed non-empty.
  \item It is easy to check that $x$ itself is a component
    of~$v$. For $m=0$, this is due to the fact that we
    have~$\ell'\leq 0 \leq u'$. However, it can happen that~$x$ is not
    the \emph{first component} of~$v$ (as it is required for a linear
    representation). Then a simple permutation of the components
    of~$v$ brings~$x$ to its first component.
  \item\label{item:minimization}
    The dimension of the $q$-linear representation is
    \begin{equation*}
      \frac{q^{M} - 1}{q - 1} + (M - m)\bigl(u' - \ell' - q^{m} + 1\bigr),
    \end{equation*}
    which is possibly very big. However, we can always apply a
    minimization algorithm in order to decrease the dimension of
    the linear representation as far as possible.
    Such an algorithm is presented
    in Berstel and
    Reutenauer~\cite[Chapter~2]{Berstel-Reutenauer:2011:noncommutative-rational-series}
    for recognizable series, but can be applied on regular sequences
    as well; see~\cite{Heuberger-Krenn-Lipnik:2022:minimisation-notes}.
    SageMath~\cite{SageMath:2021:9.4} provides an implementation
    of this minimization algorithm.
  \item The statement of Theorem~\ref{thm:remark-2.1-general}
    for $M = 1$ and $m = n_{0} = 0$
    is mentioned by the first two
    authors of this article in~\cite[Remark~5.1]{Heuberger-Krenn:2018:asy-regular-sequences}.
    \qedhere
  \end{enumerate}
\end{remark}

In order to put the main aspects of the previous result across, we present
two examples: The first one is a simple continuation of
Example~\ref{ex:pascal-odd}, and the second one discusses a $q$-recursive sequence
with more involved parameters. While the latter might not seem to be very
natural, it is an intentionally made choice
to keep things illustrative and
comprehensible. For further natural combinatorial examples we refer to
Sections~\ref{sec:stern-brocot},~\ref{sec:pascal} and~\ref{sec:ub}.

\begin{example}[Odd Entries in Pascal's Triangle, continued]
  \label{ex:odd-pascal-after-main-result}
  Let $p(n)$ again be the number of odd entries in the first~$n$ rows
  of Pascal's triangle. As already mentioned (Example~\ref{ex:pascal-odd}), $p$ is
  $2$-recursive with exponents~$M=1$ and~$m=0$ and index shift bounds $\ell=0$ and $u=1$ as well as
  \begin{equation}
    \label{eq:pascal-odd-recs}
    p(2n) = \colorbox{SpringGreen}{3}p(n) + \colorbox{LimeGreen}{0}p(n+1) \qq{and} p(2n+1) = \colorbox{Goldenrod}{2}p(n) + \colorbox{Dandelion}{1}p(n+1)
  \end{equation}
  for all $n \geq 0$. Due to Theorem~\ref{thm:remark-2.1-general}, $p$ is also
  $2$-regular (with offset $n_{1} = 0$) and a $2$-regular representation of~$p$
  can be found as follows. We have $\ell' = 0$ and $u' = 2$, and it is due to the
  relation $m = M - 1 = 0$ that the vector~$v$ only consists of one block, namely
  \begin{equation*}
    v = v_{M-1} = v_{0} =
    \begin{pmatrix}
      p\\
      p\circ(n\mapsto n + 1)\\
      p\circ(n\mapsto n + 2)
    \end{pmatrix}.
  \end{equation*}
  Moreover, we can determine the matrices~$A_{0}$ and~$A_{1}$ in various ways:
  By~\eqref{eq:Ar-explicitly}, these matrices are
  \begin{equation*}
    A_{0} =
    \begin{pmatrix}
      \colorbox{SpringGreen}{3} & \colorbox{LimeGreen}{0} & 0\\
      \colorbox{Goldenrod}{2} & \colorbox{Dandelion}{1} & 0\\
      0 & \colorbox{SpringGreen}{3} & \colorbox{LimeGreen}{0}
    \end{pmatrix}
    \qq{and}
    A_{1} =
    \begin{pmatrix}
      \colorbox{Goldenrod}{2} & \colorbox{Dandelion}{1} & 0\\
      0 & \colorbox{SpringGreen}{3} & \colorbox{LimeGreen}{0}\\
      0 & \colorbox{Goldenrod}{2} & \colorbox{Dandelion}{1}
    \end{pmatrix}.
  \end{equation*}
  However, these matrices can also be obtained in an ad hoc fashion, namely by
  inserting $2n$ and $2n + 1$ into~$v$ and then
  component-wise applying~\eqref{eq:pascal-odd-recs}. For example, let us take a look at
  the third row of~$A_{0}$: We have to consider the third component
  of~$v$, which is $p\circ(n\mapsto n+2)$. We insert $2n$, which
  results in $p\circ(n\mapsto 2n+2)$, and we obtain
  \begin{equation*}
    p\circ(n\mapsto 2n+2) = p\circ(n\mapsto 2(n+1)) =
    \colorbox{SpringGreen}{3}p\circ(n\mapsto n + 1) +
    \colorbox{LimeGreen}{0}p\circ(n\mapsto n + 2)
  \end{equation*}
  by~\eqref{eq:pascal-odd-recs}. Thus, we have a~$3$ in the second column,
  because $p\circ(n\mapsto n + 1)$ is the second component of~$v$, and a~$0$ in
  the third column, because $p\circ(n\mapsto n + 2)$ is the third component
  of~$v$. Generally speaking, the rows of~$A_{r}$ that correspond to the last
  block~$v_{M-1}$ always consist of shifted copies of the coefficients in the
  recurrence relations.

  The ``step'' between the second row and the third row of $A_{0}$ and between
  the first row and the second row of $A_{1}$ is caused by the following fact:
  After inserting $2n$ or $2n + 1$, it can happen that the remainder
  is too large to apply the given recurrence relations directly. For instance, this was the
  case when determining the third row of~$A_{0}$ above: After inserting~$2n$, we have
  obtained $p\circ(n\mapsto 2n+2)$, and we had to rewrite this to
  $p\circ(n\mapsto 2(n+1))$ to be able to
  apply~\eqref{eq:pascal-odd-recs}. This increase of the argument by~$1$ causes
  the shift of the entries in the matrix to the right by~$1$.  For a more detailed
  description of this effect, we refer to the two different cases in Part~3 of
  the proof of Theorem~\ref{thm:remark-2.1-general} in
  Section~\ref{sec:proof-main-result} and to~\eqref{eq:Ar-explicitly}.
  
  Note that the dimension of this linear representation is not minimal since
  the sequence $p\circ (n\mapsto n+2)$ can be omitted. This is
  due to the following two facts: The third columns of~$A_{0}$ and~$A_{1}$ correspond
  to $p\circ (n\mapsto n+2)$. All non-zero elements of these columns are in the last row, which again
  corresponds to $p\circ (n\mapsto n+2)$. This reduction is possible because
  the coefficient of $p(n+1)$ in the left recurrence relation
  of~\eqref{eq:pascal-odd-recs} is zero.
\end{example}

\begin{example}
  Consider the $2$-recursive sequence~$x$ with exponents $M = 3$ and
  $m = 1$ given by the recurrence relations
  \begin{equation}
    \label{eq:artificial-1}
    \begin{split}
      x(8n) &= -\phantom{0}1x(2n - 1) + \phantom{0}0x(2n) + \phantom{0}1x(2n + 1),\\
      x(8n + 1) &= -11x(2n - 1) + 10x(2n) + 11x(2n + 1),\\
      x(8n + 2) &= -21x(2n - 1) + 20x(2n) + 21x(2n + 1),\\
      x(8n + 3) &= -31x(2n - 1) + 30x(2n) + 31x(2n + 1),\\
      x(8n + 4) &= -41x(2n - 1) + 40x(2n) + 41x(2n + 1),\\
      x(8n + 5) &= -51x(2n - 1) + 50x(2n) + 51x(2n + 1),\\
      x(8n + 6) &= -61x(2n - 1) + 60x(2n) + 61x(2n + 1),\\
      x(8n + 7) &= -71x(2n - 1) + 70x(2n) + 71x(2n + 1)
    \end{split}
  \end{equation}
  for all~$n \geq 0$. So for the sake of recognition, the
  coefficients $(c_{s,k})_{0\le s<8, -1\le k\le 1}$ are given by
  $c_{s,k} = (-1)^{\iverson{k < 0}}10s + k$. The index shift bounds
  of~$x$ are $\ell = -1$ and $u = 1$, and its offset is $n_{0} =
  0$. With the notation of Theorem~\ref{thm:remark-2.1-general}, we
  further find $\ell' = -3$ and $u' = 3$.

  Due to Theorem~\ref{thm:remark-2.1-general}, $x$ is $2$-regular with offset
  $n_{1} = 1$, and by~\eqref{eq:recursive-components-of-v} and~\eqref{eq:Ar-explicitly}, a $2$-linear representation with offset $n_{1} = 1$ of~$x$
  is given by $(A_{0}, A_{1}, v)$ with
    \begin{equation*}
      v =
      {\small
      \begin{pmatrix}
        x\\
        x\circ(n\mapsto 2n - 3)\\
        x\circ(n\mapsto 2n - 2)\\
        \vdots\\
        x\circ(n\mapsto 2n + 3)\\
        x\circ(n\mapsto 4n - 3)\\
        x\circ(n\mapsto 4n - 2)\\
        \vdots\\
        x\circ(n\mapsto 4n + 5)\\
      \end{pmatrix}
      {\color{gray}
        \begin{matrix}
          \coolrightbrace{x}{v_{0}}\\
          \coolrightbrace{x\\ x\\ \vdots\\ x}{v_{1}}\\
          \coolrightbrace{x\\ x\\ \vdots\\ x}{v_{2}}
        \end{matrix}}}
    \end{equation*}
  as well as
  \begin{equation*}
    A_{0} =
    \left(\begin{smallmatrix}
        0 & 0 & 0 & 0 & 1 & 0 & 0 & 0 & 0 & 0 & 0 & 0 & 0 & 0 & 0 & 0 & 0 \\
        0 & 0 & 0 & 0 & 0 & 0 & 0 & 0 & 1 & 0 & 0 & 0 & 0 & 0 & 0 & 0 & 0 \\
        0 & 0 & 0 & 0 & 0 & 0 & 0 & 0 & 0 & 1 & 0 & 0 & 0 & 0 & 0 & 0 & 0 \\
        0 & 0 & 0 & 0 & 0 & 0 & 0 & 0 & 0 & 0 & 1 & 0 & 0 & 0 & 0 & 0 & 0 \\
        0 & 0 & 0 & 0 & 0 & 0 & 0 & 0 & 0 & 0 & 0 & 1 & 0 & 0 & 0 & 0 & 0 \\
        0 & 0 & 0 & 0 & 0 & 0 & 0 & 0 & 0 & 0 & 0 & 0 & 1 & 0 & 0 & 0 & 0 \\
        0 & 0 & 0 & 0 & 0 & 0 & 0 & 0 & 0 & 0 & 0 & 0 & 0 & 1 & 0 & 0 & 0 \\
        0 & 0 & 0 & 0 & 0 & 0 & 0 & 0 & 0 & 0 & 0 & 0 & 0 & 0 & 1 & 0 & 0 \\
        0 & \tikzmark{l1}-51 & 50 & 51 & 0 & 0 & 0 & 0 & 0 & 0 & 0 & 0 & 0 & 0 & 0 & 0 & 0 \\
        0 & -61 & 60 & 61 & 0 & 0 & 0 & 0 & 0 & 0 & 0 & 0 & 0 & 0 & 0 & 0 & 0 \\
        0 & -71 & 70 & 71 & 0 & 0 & 0 & 0 & 0 & 0 & 0 & 0 & 0 & 0 & 0 & 0 & 0 \\
        0 & 0 & 0 & -1 & 0 & 1 & 0 & 0 & 0 & 0 & 0 & 0 & 0 & 0 & 0 & 0 & 0 \\
        0 & 0 & 0 & -11 & 10 & 11 & 0 & 0 & 0 & 0 & 0 & 0 & 0 & 0 & 0 & 0 & 0 \\
        0 & 0 & 0 & -21 & 20 & 21 & 0 & 0 & 0 & 0 & 0 & 0 & 0 & 0 & 0 & 0 & 0 \\
        0 & 0 & 0 & -31 & 30 & 31 & 0 & 0 & 0 & 0 & 0 & 0 & 0 & 0 & 0 & 0 & 0 \\
        0 & 0 & 0 & -41 & 40 & 41 & 0 & 0 & 0 & 0 & 0 & 0 & 0 & 0 & 0 & 0 & 0 \\
        0 & 0 & 0 & -51 & 50 & 51 & 0 & 0\tikzmark{r1} & 0 & 0 & 0 & 0 & 0 & 0 & 0 & 0 & 0
      \end{smallmatrix}\right)
    \DrawBox[ForestGreen, thick]{l1}{r1}
    \qq{and}
    A_{1} =
    \left(\begin{smallmatrix}
        0 & 0 & 0 & 0 & 0 & 1 & 0 & 0 & 0 & 0 & 0 & 0 & 0 & 0 & 0 & 0 & 0 \\
        0 & 0 & 0 & 0 & 0 & 0 & 0 & 0 & 0 & 0 & 1 & 0 & 0 & 0 & 0 & 0 & 0 \\
        0 & 0 & 0 & 0 & 0 & 0 & 0 & 0 & 0 & 0 & 0 & 1 & 0 & 0 & 0 & 0 & 0 \\
        0 & 0 & 0 & 0 & 0 & 0 & 0 & 0 & 0 & 0 & 0 & 0 & 1 & 0 & 0 & 0 & 0 \\
        0 & 0 & 0 & 0 & 0 & 0 & 0 & 0 & 0 & 0 & 0 & 0 & 0 & 1 & 0 & 0 & 0 \\
        0 & 0 & 0 & 0 & 0 & 0 & 0 & 0 & 0 & 0 & 0 & 0 & 0 & 0 & 1 & 0 & 0 \\
        0 & 0 & 0 & 0 & 0 & 0 & 0 & 0 & 0 & 0 & 0 & 0 & 0 & 0 & 0 & 1 & 0 \\
        0 & 0 & 0 & 0 & 0 & 0 & 0 & 0 & 0 & 0 & 0 & 0 & 0 & 0 & 0 & 0 & 1 \\
        0 & \tikzmark{l2}0 & 0 & -11 & 10 & 11 & 0 & 0 & 0 & 0 & 0 & 0 & 0 & 0 & 0 & 0 & 0 \\
        0 & 0 & 0 & -21 & 20 & 21 & 0 & 0 & 0 & 0 & 0 & 0 & 0 & 0 & 0 & 0 & 0 \\
        0 & 0 & 0 & -31 & 30 & 31 & 0 & 0 & 0 & 0 & 0 & 0 & 0 & 0 & 0 & 0 & 0 \\
        0 & 0 & 0 & -41 & 40 & 41 & 0 & 0 & 0 & 0 & 0 & 0 & 0 & 0 & 0 & 0 & 0 \\
        0 & 0 & 0 & -51 & 50 & 51 & 0 & 0 & 0 & 0 & 0 & 0 & 0 & 0 & 0 & 0 & 0 \\
        0 & 0 & 0 & -61 & 60 & 61 & 0 & 0 & 0 & 0 & 0 & 0 & 0 & 0 & 0 & 0 & 0 \\
        0 & 0 & 0 & -71 & 70 & 71 & 0 & 0 & 0 & 0 & 0 & 0 & 0 & 0 & 0 & 0 & 0 \\
        0 & 0 & 0 & 0 & 0 & -1 & 0 & 1 & 0 & 0 & 0 & 0 & 0 & 0 & 0 & 0 & 0 \\
        0 & 0 & 0 & 0 & 0 & -11 & 10 & 11\tikzmark{r2} & 0 & 0 & 0 & 0 & 0 & 0 & 0 & 0 & 0
      \end{smallmatrix}\right).
    \DrawBox[Maroon, thick]{l2}{r2}
  \end{equation*}
  Again, the matrices can also be obtained ad hoc, by inserting $2n$ and $2n + 1$ into the
  components and, if needed, component-wise applying the relations
  of~\eqref{eq:artificial-1}. For example, the fourth row of~$A_{1}$
  corresponds to $x\circ(n\mapsto 2n-1)$, i.e., the fourth component
  of~$v$. Inserting $2n + 1$ yields
  $x\circ(n\mapsto 2(2n + 1) -1)= x\circ(n\mapsto 4n + 1)$, which itself is the
  $13$th component of~$v$. Thus, we have a~$1$ in the $13$th column in the
  fourth row of~$A_{1}$.

  The ``interesting'' part of the matrices~$A_{0}$ and $A_{1}$ is given by entries in rows corresponding
  to~$v_{M-1} = v_{2}$ and columns corresponding to $v_{m} = v_{1}$. It is
  marked by the green and red boxes, respectively, and the entries can be obtained
  exactly as described in the previous example. Here the application
  of~\eqref{eq:artificial-1} is indeed needed and again leads to a block of
  shifted copies of the coefficients in the recurrence relations. Also here,
  one can see the ``steps'' in the matrices that were described in
  Example~\ref{ex:odd-pascal-after-main-result}.
\end{example}

Up to now, we have reduced $q$-recursive sequences to $q$-regular
sequences with some offset. Next, we get rid of this offset; Allouche and Shallit implicitly do such an offset correction for offset~$1$ in the proof of~\cite[Lemma~4.1]{Allouche-Shallit:1992:regular-sequences}.

\begin{theorem}
  \label{thm:offset-correction}
  Let $x$ be a $q$-regular sequence with offset~$n_{0}$, and let
  $(A_{0},\ldots,A_{q-1}, v)$ be a $q$-linear
  representation with offset~$n_{0}$ of~$x$. 
  Then~$x$ is $q$-regular and a $q$-linear representation $(\widetilde{A}_{0}, \dots,
  \widetilde{A}_{q-1}, \widetilde{v})$ of~$x$ is
  given as follows:
  \begin{enumerate}[(a)]
    \item The vector-valued sequence~$\widetilde{v}$ is given in block form by
    \begin{equation}
      \label{eq:offset-v}
      \widetilde{v} =
      \begin{pmatrix}
        v\\
        \delta_{0}\\
        \vdots\\
        \delta_{n_{0}-1}
      \end{pmatrix},
    \end{equation}
    where $\delta_{k}\colon \N_{0}\to\C$ is defined by
    $\delta_{k}(n) = \iverson{n = k}$ for all $0\leq k < n_{0}$ and $n\ge 0$.
  
  \item Let~$D\in\N$ be the dimension
    of~$v$. Moreover, for $0\leq r < q$ and $0 \leq k < n_{0}$, let\footnote{We
      set $x(n)=0$ for $n < 0$ to ensure that all vectors are defined.
      Other definitions of values for negative arguments are possible, but
      would result in other values for $W_{r}$.} $w_{r,k}
    \coloneqq v(qk + r) - A_{r}v(k)\in\C^{D}$, and let $W_{r}$ be the $D\times n_{0}$
    matrix which has columns $w_{r,0}$, \ldots, $w_{r,n_{0}-1}$.  Then
    for all $0\leq r < q$, the matrix $\widetilde{A}_{r}$ is given in block form by
  \begin{equation}
    \label{eq:matrices-A-tilde}
    \widetilde{A}_{r} =
    \begin{pmatrix}
      A_{r} & W_{r}\\
      0 & J_{r}
    \end{pmatrix},
  \end{equation}
  where $J_{r}\in\set{0,1}^{n_{0}\times n_{0}}$ is the matrix defined by
  \begin{equation}
    \label{eq:matrix-J-corr-n0}
    J_{r} \coloneqq \bigl(\iverson{jq = k - r}\bigr)_{\substack{0\leq k < n_{0}\\ 0\leq j < n_{0}}}.
  \end{equation}
  The matrix $J_r$ is a lower triangular matrix with diagonal
  \begin{equation*}
    \diag(J_{r}) = \bigl(\iverson{r = 0}, 0, \dots, 0\bigr).
  \end{equation*}
\end{enumerate}
\end{theorem}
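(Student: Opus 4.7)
The plan is to directly verify the recurrence $\widetilde{v}(qn+r) = \widetilde{A}_r \widetilde{v}(n)$ for every $n \geq 0$ and $0 \leq r < q$; the first-component condition holds by construction of~\eqref{eq:offset-v}. Splitting both sides according to the block structure reduces the identity to two independent claims, one for the top $D$ rows involving $A_r$ and $W_r$, and one for the bottom $n_0$ rows involving $J_r$. The structural description of $J_r$ will then be a separate short arithmetic check.

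For the upper block, the identity to verify is
\begin{equation*}
v(qn+r) = A_r v(n) + W_r \bigl(\delta_0(n), \ldots, \delta_{n_0-1}(n)\bigr)^{\top},
\end{equation*}
and I will split on the two ranges of $n$. If $n \geq n_0$, every $\delta_k(n)$ vanishes, so the right-hand side collapses to $A_r v(n)$, which equals $v(qn+r)$ by the offset-$n_0$ recurrence of $v$. If $0 \leq n < n_0$, the vector $(\delta_0(n), \ldots, \delta_{n_0-1}(n))^{\top}$ is the $n$-th standard basis vector of $\C^{n_0}$, so $W_r$ applied to it returns its $n$-th column $w_{r,n} = v(qn+r) - A_r v(n)$, and adding $A_r v(n)$ restores the desired value $v(qn+r)$.

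For the lower block I will compare the two sides of
\begin{equation*}
\bigl(\delta_k(qn+r)\bigr)_{0 \leq k < n_0} = J_r \bigl(\delta_j(n)\bigr)_{0 \leq j < n_0}
\end{equation*}
entry by entry. The $k$-th coordinate of the right-hand side is $\sum_{0\leq j < n_0} \iverson{jq = k-r}\,\delta_j(n)$, which equals $\delta_{(k-r)/q}(n)$ when $k \geq r$ and $q$ divides $k-r$, and vanishes otherwise; the unique candidate index $j = (k-r)/q$ lies in $\{0, \ldots, n_0 - 1\}$ because $(k-r)/q \leq k < n_0$. The $k$-th coordinate of the left-hand side is $\iverson{qn+r = k}$, which vanishes under the same conditions and otherwise equals $\iverson{n = (k-r)/q} = \delta_{(k-r)/q}(n)$, matching the right-hand side. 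For the claimed shape of $J_r$, the estimate $jq \geq (k+1)q > k \geq k-r$ whenever $j > k$ yields triangularity, and on the diagonal the equation $k(q-1) = -r$ combined with $k, r \geq 0$ and $q \geq 2$ forces $k = r = 0$, which gives exactly the stated diagonal.

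All steps are straightforward bookkeeping with indicator functions, and no deeper machinery is needed beyond the definitions of $w_{r,k}$ and $J_r$. The main delicacy — rather than a genuine obstacle — is keeping the index alignment in the lower block so that every $\delta_{(k-r)/q}$ arising actually corresponds to a component present in~\eqref{eq:offset-v}, which the inequality $(k-r)/q < n_0$ above settles once and for all.
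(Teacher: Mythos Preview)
Your proof is correct and follows essentially the same route as the paper's: both verify the block recurrence $\widetilde{v}(qn+r)=\widetilde{A}_r\widetilde{v}(n)$ by treating the top block via the identity $v(qn+r)=A_r v(n)+\sum_{k}w_{r,k}\delta_k(n)$ and the bottom block via the relation $\delta_k(qn+r)=\iverson{k\ge r,\ k\equiv r\tpmod q}\,\delta_{(k-r)/q}(n)$, and then read off the triangular shape of $J_r$ from its definition. Your version is slightly more explicit in splitting the upper block into the cases $n\ge n_0$ and $n<n_0$, but the argument is the same.
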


\begin{corollary}\label{corollary:main:is-q-regular}
  Every $q$-recursive sequence~$x$ with any offset is $q$-regular and a $q$-linear representation of~$x$ is
  given as the combination of the explicit constructions of the $q$-linear representations from
  Theorem~\ref{thm:remark-2.1-general} and Theorem~\ref{thm:offset-correction}.
\end{corollary}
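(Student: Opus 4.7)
The plan is to simply chain Theorem~\ref{thm:remark-2.1-general} and Theorem~\ref{thm:offset-correction}. Given a $q$-recursive sequence~$x$ with offset~$n_{0}$, Theorem~\ref{thm:remark-2.1-general} produces an explicit $q$-linear representation $(A_{0},\dots,A_{q-1},v)$ with offset $n_{1} = n_{0} - \floor{\ell'/q^{M}}$, where $\ell'$ is as defined in~\eqref{eq:ell}. This representation can then be fed as input into Theorem~\ref{thm:offset-correction}, which outputs a $q$-linear representation $(\widetilde{A}_{0},\dots,\widetilde{A}_{q-1},\widetilde{v})$ of~$x$ in the usual sense (i.e., with offset~$0$).

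For the chaining to be legitimate, I first have to verify that $n_{1}$ is a non-negative integer, so that Theorem~\ref{thm:offset-correction} is applicable with its own offset parameter set to~$n_{1}$. By the definition of $\ell'$ in~\eqref{eq:ell} we have $\ell' \le 0$ (as also noted in Remark~\ref{rem:main-thm}), so $\floor{\ell'/q^{M}} \le 0$, and consequently $n_{1} \ge n_{0} \ge 0$. Hence the hypothesis of Theorem~\ref{thm:offset-correction} is satisfied by the representation coming out of Theorem~\ref{thm:remark-2.1-general}.

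With these pieces in place, I would state the proof as: applying Theorem~\ref{thm:remark-2.1-general} yields that $x$ is $q$-regular with offset~$n_{1}$ together with an explicit $q$-linear representation with that offset; applying Theorem~\ref{thm:offset-correction} to this representation yields that $x$ is $q$-regular in the usual sense, again with an explicit $q$-linear representation, obtained by the block construction~\eqref{eq:offset-v}--\eqref{eq:matrices-A-tilde} on top of the blocks~\eqref{eq:recursive-blocks-of-v}--\eqref{eq:recursive-components-of-v}. If in the output of Theorem~\ref{thm:remark-2.1-general} the component~$x$ is not already the first entry of~$v$, a harmless permutation of rows and columns (as remarked after Theorem~\ref{thm:remark-2.1-general}) brings it into the first position before invoking Theorem~\ref{thm:offset-correction}.

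There is essentially no genuine obstacle here: both theorems are already the substantive results, and the corollary is a bookkeeping statement combining them. The only points that need explicit mention in the write-up are the non-negativity of~$n_{1}$ (which justifies the chaining) and, if desired, the permutation making~$x$ the first component of~$v$ (which ensures that the combined construction fits the formal definition of a $q$-linear representation).
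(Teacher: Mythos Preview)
Your proposal is correct and matches the paper's own proof, which simply states that the corollary follows directly by combining Theorem~\ref{thm:remark-2.1-general} and Theorem~\ref{thm:offset-correction}. Your additional checks (non-negativity of~$n_{1}$ and the permutation bringing~$x$ into the first component) are useful clarifications but are not spelled out in the paper.
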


While Section~\ref{chap:lin-rep} up to this point (in particular Definition~\ref{definition:q-reqursive-sequence}) 
considered homogeneous recursive sequences, 
also inhomogeneities can occur. An example is, as already mentioned, the binary sum of digits,
where the constant sequence appears. In the following corollary,
we deal with such inhomogeneous recursive sequences.

\begin{corollary}
  \label{cor:inhomogeneities}
  Let $q\geq 2$, $M > m \geq 0$, $\ell\leq u$ and $n_{0}\geq \max\set{-\ell/q^{m},0}$ be
  fixed integers.
  Furthermore, let $x$ be a
  sequence such that for all $0\leq s < q^{M}$
  there exist $q$-regular sequences $g_{s}$ and constants
  $c_{s,k}\in\C$ for $\ell\leq k \leq u$ with
  \begin{equation}\label{eq:recursions-with-inhomogeneities}
    x(q^{M}n + s) = \smashoperator{\sum_{\ell\leq k\leq u}} c_{s,k}\,x(q^{m}n + k) + g_{s}(n)
  \end{equation}
  for all $n\geq n_{0}$.

  Then~$x$ is $q$-regular and a $q$-linear representation of~$x$ can
  be constructed straightforwardly by combining the explicit constructions of the
  $q$-linear representations from Theorem~\ref{thm:remark-2.1-general}
  and Theorem~\ref{thm:offset-correction} with $q$-linear
  representations of shifted versions of the sequences~$g_{s}$.
\end{corollary}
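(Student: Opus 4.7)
The plan is to absorb the inhomogeneities into an enlarged vector-valued sequence so that the construction of Theorem~\ref{thm:remark-2.1-general} still closes into a linear recurrence, and then to invoke Theorem~\ref{thm:offset-correction} to dispose of the resulting offset.

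First I would form the vector-valued sequence~$v$ and the matrices $A_{0},\dots,A_{q-1}$ produced by the construction of Theorem~\ref{thm:remark-2.1-general} applied to~\eqref{eq:recursions-with-inhomogeneities} as if the inhomogeneities $g_{s}$ were absent. Inspecting that construction (and the worked computation in Example~\ref{ex:odd-pascal-after-main-result}), the only rows of the matrices $A_{r}$ in which the coefficients $c_{s,k}$ of the recurrence relations actually appear are those corresponding to the last block $v_{M-1}$: for every such row one rewrites an argument of the form $q^{M-1}(qn+r)+j$ as $q^{M}(n+\alpha)+s$ with $0\le s<q^{M}$ and a uniquely determined integer shift~$\alpha$, and then substitutes via~\eqref{eq:recursions-with-inhomogeneities}. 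Under the inhomogeneous recurrence, this substitution now produces an additional summand $g_{s}(n+\alpha)$ that must be expressed linearly in terms of the new state.

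Next, for each of the finitely many pairs $(s,\alpha)$ that actually occur in this computation, I would fix a $q$-linear representation $(B_{s,\alpha,0},\dots,B_{s,\alpha,q-1},w_{s,\alpha})$ of the shifted sequence $n\mapsto g_{s}(n+\alpha)$. Such a representation exists because the class of $q$-regular sequences is closed under shifts and, given an explicit $q$-linear representation of $g_{s}$, one obtains a representation of $n\mapsto g_{s}(n+\alpha)$ via a standard, fully explicit construction (for instance, by enlarging the state vector of $g_{s}$ to contain $g_{s}(n),\,g_{s}(n+1),\dots,g_{s}(n+\alpha)$ as additional components, exactly analogous to Theorem~\ref{thm:remark-2.1-general} with $M=1$ and $m=0$). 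I then form the augmented state
\begin{equation*}
  \widetilde{v} = \begin{pmatrix} v\\ (w_{s,\alpha})_{(s,\alpha)} \end{pmatrix}
\end{equation*}
and define augmented matrices $\widetilde{A}_{r}$ by copying $A_{r}$ into the upper-left corner, placing the matrices from the representations of the $w_{s,\alpha}$ blockwise into the lower-right corner, filling the upper-right block with entries that inject each required value $g_{s}(n+\alpha)$ (a distinguished component of $w_{s,\alpha}(n)$) into the appropriate row of the former $A_{r}$, and setting the lower-left block to zero. A row-by-row verification then confirms that $\widetilde{v}(qn+r)=\widetilde{A}_{r}\widetilde{v}(n)$ for all $n\ge n_{1}$, where $n_{1}$ is the offset produced by Theorem~\ref{thm:remark-2.1-general}. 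Applying Theorem~\ref{thm:offset-correction} to this representation removes the offset and yields the desired $q$-linear representation of~$x$.

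The main obstacle is the bookkeeping: one has to enumerate all pairs $(s,\alpha)$ arising from the rewriting $q^{M-1}r+j=\alpha q^{M}+s$ (noting that $\alpha$ is bounded because $j$ lies in the finite range $[\ell',\,q^{M-1}-q^{m}+u']$ and $0\le r<q$), match each injected value $g_{s}(n+\alpha)$ with the correct component of $w_{s,\alpha}$, and ensure that the block dimensions line up consistently. Once this administrative work is carried out, correctness of the combined construction follows directly from the correctness of Theorems~\ref{thm:remark-2.1-general} and~\ref{thm:offset-correction}.
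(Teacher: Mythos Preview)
Your proposal is correct and follows essentially the same route as the paper's proof: both adapt only the rows of the construction in Theorem~\ref{thm:remark-2.1-general} corresponding to the last block~$v_{M-1}$, append $q$-linear representations of the finitely many shifted sequences $n\mapsto g_{s}(n+\alpha)$ that arise (the paper writes these shifts as $d'$ and $d'+1$ and gives the explicit range $\floor{\ell'/q^{M}}\le d''\le \floor{(q^{M-1}-q^{m}+u')/q^{M}}+1$), and then invoke Theorem~\ref{thm:offset-correction} to remove the offset. Your identification of the bookkeeping as the only real work matches the paper's treatment.
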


\begin{remark}
  The construction of a $q$-linear representation of a $q$-recursive sequence (given by
  recurrence relations as in~\eqref{eq:def-q-recursive} or in~\eqref{eq:recursions-with-inhomogeneities}) with offset
  has been included~\cite{Krenn-Lipnik:2019:trac-regular-sequence-from-recurrences} in SageMath~\cite{SageMath:2021:9.4}.
\end{remark}

\subsection{Reduction to $q$-Regular Sequences in a Special Case}
\label{sec:recursive-special}

We now study a specific case of $q$-recursive sequences,
namely $q$-recursive sequences with exponents~$M=m+1$ and~$m$ and index shift bounds
$\ell=0$ and $u=q^{m}-1$ for some $m\in\N_{0}$. The study of this case
is well-motivated: First of all, it will turn out in
Sections~\ref{sec:pascal} and~\ref{sec:ub} that this choice of parameters is quite natural,
i.e., we will see examples where subsequences of indices modulo~$q^{m+1}$
equal linear combinations of subsequences of indices modulo~$q^{m}$. Moreover, we can give the matrices of the linear
representation in a simpler form than in
Theorem~\ref{thm:remark-2.1-general}, and the upper bound~$u'$ can be
improved significantly. Finally, we show that the asymptotics of the
summatory functions of this special case of sequences can
be obtained directly from the recurrence relations
in~\eqref{eq:def-q-recursive}, without knowing a linear representation of the
sequence explicitly.

Note that in this section we assume the offset to be $n_{0} = 0$, mainly for
the sake of readability. However, we want to emphasize that all results can
be stated for arbitrary offset~$n_{0}\in\N_{0}$ as well, using
Theorem~\ref{thm:offset-correction}.

We start by giving an analogon of Theorem~\ref{thm:remark-2.1-general}
for our special case.

\begin{theorem}
  \label{prop:recursive-special-case}
  Let $x$ be a $q$-recursive sequence with exponents~$M=m+1$ and~$m$ and index shift bounds $\ell=0$ and
  $u=q^{m} - 1$ and coefficients $(c_{s,k})_{0\leq s < q^{m + 1}, 0\leq k < q^m}$.
  We define the matrices
  \begin{equation}
    \label{eq:prop-recursive-Bs}
    B_{r} = (c_{rq^m+d,k})_{\substack{0 \leq d < r q^{m} \\ 0\leq k < q^{m}}}
  \end{equation}
  for $0 \leq r < q$. Then $x$ is $q$-regular and a $q$-linear
  representation~$(A_{0}, \dots, A_{q-1}, v)$ of~$x$ is given as follows:

  \begin{enumerate}[(a)]
  \item The vector-valued sequence~$v$ is given in block form by
  \begin{equation}
    \label{eq:prop-recursive-v}
    v =
    \begin{pmatrix}
      v_{0}\\
      \vdots\\
      v_{m}
    \end{pmatrix},
  \end{equation}
  where for
  $0\leq j \le m$, the block $v_{j}$ has the form
    \begin{equation}
      \label{eq:prop-recursive-vj}
      v_{j} =
      \begin{pmatrix}
        x\circ(n\mapsto q^{j}n)\\
        \vdots\\
        x\circ(n\mapsto q^{j}n + q^{j} - 1)
      \end{pmatrix}.
    \end{equation}
  \item For $0 \leq r < q$, the matrices $A_{r}$ are given in block
  form by
  \begin{equation}
    \label{eq:prop-recursive-matrices}
    A_{r} =
    \begin{pmatrix}
      J_{r0} & J_{r1}\\
      0 & B_{r}
    \end{pmatrix},
  \end{equation}
  where $J_{r0}\in\set{0,1}^{\frac{q^{m} - 1}{q - 1}\times \frac{q^{m} - 1}{q - 1}}$ and
  $J_{r1}\in\set{0,1}^{\frac{q^{m} - 1}{q - 1}\times q^{m}}$. Furthermore, for $0\leq r < q$, the
  matrices~$J_{r0}$ are upper triangular matrices with
  zeros on the diagonal, and the matrices $J_{r0}$ and $J_{r1}$ are
  given explicitly by the first
  case of~\eqref{eq:Ar-explicitly} (with $u'$ replaced by $q^m-1$).
\end{enumerate}
\end{theorem}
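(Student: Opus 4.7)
My plan is to verify the claimed representation directly from the definitions rather than to invoke Theorem~\ref{thm:remark-2.1-general} as a special case. The advantage is that in the setting $M=m+1$, $\ell=0$, $u=q^m-1$, the block $v_m$ already collects all of the shifts $x(q^m n + k)$, $0\le k<q^m$, that appear on the right-hand side of the recurrence, so no enlargement of $v_m$ beyond size~$q^m$ is necessary (this is where the improvement over Theorem~\ref{thm:remark-2.1-general} enters). Concretely, for each $0\le r<q$, I would check the equation $v(qn+r)=A_r v(n)$ (valid for every $n\ge 0$ since $n_0=0$) block by block, splitting the verification into the bottom block~$v_m$, where the recurrence~\eqref{eq:def-q-recursive} is actually used, and the top blocks $v_0,\dots,v_{m-1}$, which are pure re-indexing.

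For the bottom block, I would expand
\begin{equation*}
v_m(qn+r) = \bigl(x(q^m(qn+r) + d)\bigr)_{0\le d<q^m}^{\top} = \bigl(x(q^{m+1}n + rq^m + d)\bigr)_{0\le d<q^m}^{\top}
\end{equation*}
and apply~\eqref{eq:def-q-recursive} with $s=rq^m+d$ to each entry. This gives that the $d$-th entry equals $\sum_{0\le k<q^m} c_{rq^m+d,k}\,x(q^m n + k)$, which matches $(B_r v_m(n))_d$ with $B_r$ as in~\eqref{eq:prop-recursive-Bs}. The zero bottom-left block of $A_r$ is then automatic: the bottom block of $v(qn+r)$ depends only on $v_m(n)$.

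For the top blocks, the key observation is that for $0\le j<m$ we have
\begin{equation*}
v_j(qn+r) = \bigl(x(q^{j+1}n + rq^j + i)\bigr)_{0\le i<q^j}^{\top},
\end{equation*}
which is literally a contiguous sub-vector of length $q^j$ of $v_{j+1}(n) = (x(q^{j+1}n+k))_{0\le k<q^{j+1}}$, namely the one at positions $rq^j,\dots,(r+1)q^j-1$. No recurrence is invoked. This forces the block row of $A_r$ corresponding to $v_j$ to have exactly one nonzero block, sitting in the column of $v_{j+1}$ and acting as an extraction map. If $j+1<m$, this block belongs to $J_{r0}$; if $j+1=m$, it belongs to $J_{r1}$. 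In particular $J_{r0}$ is strictly block-upper-triangular, so upper triangular with zero diagonal as claimed. Matching the resulting extraction rows against the first case of~\eqref{eq:Ar-explicitly} (with $u'$ replaced by $q^m-1$) is then an indexing check. The only real difficulty in the whole argument is combinatorial bookkeeping — tracking how the block decomposition of~$v$ indexes into the rows and columns of~$A_r$, and confirming that no coefficient from~\eqref{eq:def-q-recursive} leaks into the $J_{r\bullet}$ blocks — but there is no analytic or structural obstacle.
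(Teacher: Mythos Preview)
Your proposal is correct and follows essentially the same route as the paper: the paper's proof also splits into the top blocks $v_0,\dots,v_{m-1}$ (pure re-indexing into the next block $v_{j+1}$, forcing the strictly upper triangular $J_{r0}$) and the bottom block $v_m$ (where the recurrence~\eqref{eq:def-q-recursive} gives $v_m(qn+r)=B_r v_m(n)$ directly). The only cosmetic difference is that the paper handles the top blocks by citing Part~1 of the proof of Theorem~\ref{thm:remark-2.1-general} rather than writing out the sub-vector observation explicitly as you do.
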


\begin{remark}\makeanchor\label{rem:result-special-case}
  \begin{enumerate}
  \item The structure of~$v$ is the same as in
    Theorem~\ref{thm:remark-2.1-general}. In particular, the
    blocks~$v_{j}$ with $0\leq j< m$ coincide with the blocks~$v_{j}$
    from Theorem~\ref{thm:remark-2.1-general} given
    in~\eqref{eq:recursive-components-of-v-1}.
  \item The matrices $J_{r0}$ and $J_{r1}$ can be decomposed into
    blocks of identity matrices and zero matrices of smaller dimensions, which
    are horizontally shifted depending on~$r$. For an illustration we refer to
    Example~\ref{ex:artificial-example-special-case}.
  \item The last component of $v$ is $x\circ(n\mapsto q^mn+q^m-1)$ in contrast
    to $x\circ(n\mapsto q^mn+u')$ when using Theorem~\ref{thm:remark-2.1-general}.
    This means that using Theorem~\ref{prop:recursive-special-case} leads to a linear
    representation whose dimension is $\frac{q^{m+1} - q}{q-1}$ less than the dimension
    achieved by Theorem~\ref{thm:remark-2.1-general}.
  \item In the case $m = 0$, only rather special sequences can be handled by 
    Theorem~\ref{prop:recursive-special-case}. For instance, for $q=2$ and $m=0$,
    only sequences of the form $x(n) = x(0)a^{s(n)}$,    
    where~$s(n)$ is the binary sum of digits of~$n$ and~$a$ is some constant,
    fulfill the assumptions of this theorem. For all other $q$-recursive
    sequences with $m = 0$, Theorem~\ref{thm:remark-2.1-general} has to be
    used.
    \qedhere
  \end{enumerate}
\end{remark}

The following example will allow us to illustrate
Theorem~\ref{prop:recursive-special-case}. For the sake of simplicity, we again choose an artificial example.

\begin{example}
  \label{ex:artificial-example-special-case}
  Let us study the $2$-recursive sequence~$x$ with exponents~$M=3$ and $m=2$ given by the
  recurrence relations
  \begin{align*}
    f(8n) &= f(4n) + f(4n+1) + f(4n+2) + f(4n+3),\\
    f(8n + 1) &= f(4n) + f(4n+1) + f(4n+2) + f(4n+3),\\
    f(8n + 2) &= f(4n) + f(4n+1) + f(4n+2) + f(4n+3),\\
    f(8n + 3) &= f(4n) + f(4n+1) + f(4n+2) + f(4n+3),\\
    f(8n + 4) &= 2f(4n) + 2f(4n+1) + 2f(4n+2) + 2f(4n+3),\\
    f(8n + 5) &= 2f(4n) + 2f(4n+1) + 2f(4n+2) + 2f(4n+3),\\
    f(8n + 6) &= 2f(4n) + 2f(4n+1) + 2f(4n+2) + 2f(4n+3),\\
    f(8n + 7) &= 2f(4n) + 2f(4n+1) + 2f(4n+2) + 2f(4n+3)
  \end{align*}
  for all $n \geq 0$. Then we have
  \begin{equation*}
    B_{0} =
    \begin{pmatrix}
      \tikzmark{l1}1 & 1 & 1 & 1\\
      1 & 1 & 1 & 1\\
      1 & 1 & 1 & 1\\
      1 & 1 & 1 & 1\tikzmark{r1}\\
    \end{pmatrix}
    \qq{and} B_{1} =
    \begin{pmatrix}
      \tikzmark{l2}2 & 2 & 2 & 2\\
      2 & 2 & 2 & 2\\
      2 & 2 & 2 & 2\\
      2 & 2 & 2 & 2\tikzmark{r2}\\
    \end{pmatrix}.
    \DrawBox[ForestGreen, thick]{l1}{r1}
    \DrawBox[Maroon, thick]{l2}{r2}
  \end{equation*}

  By Theorem~\ref{prop:recursive-special-case}, $x$ is $2$-regular
  and a $2$-linear representation~$(A_{0}, A_{1}, v)$ of~$x$ is given
  by
  \begin{equation*}
    v =
    \begin{pmatrix}
      x\\
      x\circ(n\mapsto 2n)\\
      x\circ(n\mapsto 2n + 1)\\
      x\circ(n\mapsto 4n)\\
      x\circ(n\mapsto 4n+1)\\
      x\circ(n\mapsto 4n+2)\\
      x\circ(n\mapsto 4n+3)
    \end{pmatrix}
  \end{equation*}
  as well as
  \begingroup
  \renewcommand*{\arraystretch}{0.9}
  \setlength\arraycolsep{4pt} 
  \setcounter{MaxMatrixCols}{30}
  \begin{equation*}
    A_{0}=
    \begin{pmatrix}
      \tikzmark{l3}0 & \tikzmark{l5}1\tikzmark{r5} & 0 & \tikzmark{l4}0 & 0 & 0 & 0\\
      0 & 0 & 0 & \tikzmark{l6}1 & 0 & 0 & 0\\
      0 & 0 & 0\tikzmark{r3} & 0 & 1\tikzmark{r6} & 0 & 0\tikzmark{r4}\\
      0 & 0 & 0 & \tikzmark{left}1 & 1 & 1 & 1\\
      0 & 0 & 0 & 1 & 1 & 1 & 1\\
      0 & 0 & 0 & 1 & 1 & 1 & 1\\
      0 & 0 & 0 & 1 & 1 & 1 & 1\tikzmark{right}\\
    \end{pmatrix}
    \DrawBox[ForestGreen, thick]{left}{right}
    \DrawBox[lightgray, thick]{l5}{r5}
    \DrawBox[lightgray, thick]{l6}{r6}
    \DrawBox[gray, thick]{l3}{r3}
    \DrawBox[gray, thick]{l4}{r4}
    \qq{and} A_{1} =
    \begin{pmatrix}
      \tikzmark{l1}0 & 0 & \tikzmark{l4}1\tikzmark{r4} & \tikzmark{l2}0 & 0 & 0 & 0\\
      0 & 0 & 0 & 0 & 0 & \tikzmark{l5}1 & 0\\
      0 & 0 & 0\tikzmark{r1} & 0 & 0 & 0 & 1\tikzmark{r2}\\
      0 & 0 & 0 & \tikzmark{l3}2 & 2 & 2 & 2\\
      0 & 0 & 0 & 2 & 2 & 2 & 2\\
      0 & 0 & 0 & 2 & 2 & 2 & 2\\
      0 & 0 & 0 & 2 & 2 & 2 & 2\tikzmark{r3}\\
    \end{pmatrix}.
    \DrawBox[lightgray, thick]{l4}{r4}
    \DrawBox[lightgray, thick]{l5}{r2}
    \DrawBox[gray, thick]{l1}{r1}
    \DrawBox[gray, thick]{l2}{r2}
    \DrawBox[Maroon, thick]{l3}{r3}
  \end{equation*}
  \endgroup The dark gray boxes mark the matrices $J_{r0}$ and $J_{r1}$,
  whereas the smaller, light gray boxes mark the shifted identity matrices
  mentioned in Remark~\ref{rem:result-special-case}.
\end{example}

\section{Asymptotics}
\label{sec:asymp}

We want to study the asymptotic behavior for $q$-recursive sequences
(or, to be precise, of their summatory functions). As we have already
seen that such sequences are $q$-regular, we can apply the results of
\cite{Heuberger-Krenn:2018:asy-regular-sequences}. This is indeed what
we do, however, our set-up here is more specific than $q$-regular
sequences in general, because the sequences are given by particular
recurrence relations. This leads to more specific results here.

We start by briefly discussing the growth of matrix products, in
particular in conjunction with the joint spectral radius. This is one
important quantity determining the asymptotics of a sequence. Beside
that, the eigenvalues of the sum of matrices of a $q$-linear
representation play an important role.

Again, we will distinguish between the general case and the special
case introduced in Section~\ref{sec:recursive-special}.

\subsection{Growth of Matrix Products}
\label{sec:growth-of-matrix-products}

Before presenting previous results and adapting them to our purposes, we recall
the notion of the joint spectral radius
and introduce some related notions.

We fix a vector norm~$\norm{\,\cdot\,}$ on $\C^{D}$ and consider its
induced matrix norm.

\begin{definition}Let $\calG$ be a finite set of $D\times
  D$ matrices over $\C$.

  \begin{enumerate}[(a)]
  \item The joint spectral radius of~$\calG$ is defined as
    \begin{equation*}
      \rho(\calG)\coloneqq \lim_{k\to\infty} \sup\{\norm{G_{1}\ldots
        G_{k}}^{1/k}\mid G_1,\ldots, G_k\in\calG\}.
    \end{equation*}
  \item We say that $\calG$ has the \emph{finiteness property} if there exists a
    $k\in\N$ such that
    \begin{equation*}
      \rho(\calG)=\sup\{\norm{G_{1}\ldots
        G_{k}}^{1/k}\mid G_1,\ldots, G_k\in\calG\}.
    \end{equation*}
  \item We say that $\calG$ has the \emph{simple growth property} if
    \begin{equation*}
      \norm{G_1\ldots G_k}=O(\rho(\calG)^k)
    \end{equation*}
    holds for all $G_1$, \ldots, $G_k\in\calG$ and $k\to\infty$.
  \end{enumerate}
\end{definition}

\begin{remark}\makeanchor\label{remark:joint-spectral-radius-etc}
  \begin{enumerate}
  \item In the definition of the joint spectral radius, the limit can be
    replaced by an infimum over all $k \geq 1$; see Rota and Strang~\cite{Rota-Strang:1960}
    and also~\cite[Section~7.2]{Heuberger-Krenn:2018:asy-regular-sequences}. In
    particular, the limit in the definition of the joint spectral radius always exists.
  \item As any two norms on $\C^{D\times D}$ are equivalent, the definitions of
    the joint spectral radius and the simple growth property do not depend on
    the chosen norm. The finiteness property, however, depends on the chosen
    norm; see
    Remark~\ref{remark:unbordered-factors-finiteness-property} for an example.
  \item The finiteness property implies the simple growth property; see
    \cite[Section~7.2]{Heuberger-Krenn:2018:asy-regular-sequences}.
  \item The set
    \begin{equation*}
      \calG\coloneqq \left\{ \begin{pmatrix}1&1\\0&1\end{pmatrix}\right\}
    \end{equation*}
    has joint spectral radius $1$, but not the simple growth property, because the
    $k$th power of the only matrix in $\calG$ equals
    \begin{equation*}
      \begin{pmatrix}1&1\\0&1\end{pmatrix}^k = \begin{pmatrix}1&k\\0&1\end{pmatrix}.
      \qedhere
    \end{equation*}
  \end{enumerate}
\end{remark}

In Lemma~\ref{lemma:simple-growth-property-block-triangular-matrices}, we will
study sufficient conditions under which sets of block triangular matrices have the
simple growth property.

\subsection{Asymptotics for Regular Sequences}
\label{sec:rec-sec-asymp}

In order to obtain the asymptotics for the summatory function of $q$-recursive
sequences, we now apply a result of the first two authors of this article on the
asymptotic behavior of $q$-regular
sequences~\cite[Theorem~A]{Heuberger-Krenn:2018:asy-regular-sequences}. So
let~$x$ be a $q$-regular sequence with $q$-linear representation
$(A_{0}, \dots, A_{q-1}, v)$, and set
\begin{equation*}
  C\coloneqq \smashoperator{\sum_{0\leq r< q}}A_{r}.
\end{equation*}

For a square matrix~$G$, let $\sigma(G)$ denote the set of eigenvalues of~$G$
and by $m_{G}(\lambda)$ the size of the largest Jordan block of~$G$
associated with some~$\lambda\in\C$. In particular, we
have $m_{G}(\lambda) = 0$ if $\lambda\notin\sigma(G)$.

 Then we choose $R > 0$ as follows: If the set
$\mathcal{A} = \set{A_{0},\dots,A_{q-1}}$ has the simple growth property,
then we set $R=\rho(\mathcal{A})$. Otherwise, we choose
$R > \rho(\mathcal{A})$ such that there is no
eigenvalue~$\lambda\in\sigma(C)$ with $\rho(\mathcal{A}) < \abs{\lambda}\leq
R$. Furthermore, we let
\begin{equation*}
  \mathcal{X}(s) = \sum_{n\geq 1}n^{-s}x(n) \qq{and} \mathcal{V}(s) = \sum_{n\geq 1}n^{-s}v(n)
\end{equation*}
denote the Dirichlet series corresponding to~$x$ and~$v$.
Now we are ready to state the result.

\begin{theorem}[Asymptotic Analysis of $q$-Regular Sequences~{\cite[Theorem~A]{Heuberger-Krenn:2018:asy-regular-sequences}}]
  \label{thm:asymp}
  With the notations above, we have%
  \footnote{We let $\fractional{z} \coloneqq z - \floor{z}$ denote the
    fractional part of a real number~$z$.}
  \begin{multline}\label{eq:main-asymptotic-expansion}
    X(N) = \smashoperator{\sum_{0\leq n < N}} x(n) = \smashoperator{\sum_{\substack{\lambda\in\sigma(C)\\ \abs{\lambda} > R}}} N^{\log_{q}\lambda}
    \quad\smashoperator{\sum_{0\leq k < m_{C}(\lambda)}}\quad\frac{(\log N)^{k}}{k!}\ \Phi_{\lambda k}(\fractional{\log_{q}N})\\
    + O\bigl(N^{\log_{q}R}(\log N)^{\max\set{m_{C}(\lambda)\colon \abs{\lambda} = R}}\bigr)
  \end{multline}
  as $N\to\infty$, where $\Phi_{\lambda k}$ are suitable $1$-periodic functions. If
  there are no eigenvalues $\lambda\in\sigma(C)$ with
  $\abs{\lambda} \le R$, the $O$-term can be omitted.

  For $\abs{\lambda} > R$ and $0\leq k < m_{C}(\lambda)$, the
  function~$\Phi_{\lambda k}$ is Hölder continuous with any exponent
  smaller than~$\log_{q}(\abs{\lambda}/R)$.

  The Dirichlet series $\mathcal{V}(s)$ converges absolutely and
  uniformly on compact subsets of the half plane
  $\Re s > \log_{q}R + 1$ and can be continued to a meromorphic
  function on the half plane $\Re s > \log_{q}R$. It satisfies the
  functional equation
  \begin{equation}
    \label{eq:dirichlet-equation}
    (I - q^{-s}C)\mathcal{V}(s) = \smashoperator{\sum_{1\leq n < q}} n^{-s}v(n) + q^{-s}\smashoperator{\sum_{0\leq r < q}}A_{r}
    \sum_{k\geq 1}\binom{-s}{k}\biggl(\frac{r}{q}\biggr)^{k}\mathcal{V}(s+k)
  \end{equation}
  for $\Re s > \log_{q}R$. The right-hand side
  of~\eqref{eq:dirichlet-equation} converges absolutely and uniformly
  on compact subsets of $\Re s > \log_{q}R$. In particular,
  $\mathcal{V}(s)$ can only have poles where $q^{s}\in\sigma(C)$.

  For $\lambda\in\sigma(C)$ with $\abs{\lambda} > R$ and $0\le k<m_{C}(\lambda)$, the Fourier series
  \begin{equation}
    \Phi_{\lambda k}(u) = \sum_{\mu\in\Z}\varphi_{\lambda k\mu}\exp(2\mu\pi iu)
  \end{equation}
  converges pointwise for $u\in\R$ where the Fourier
  coefficients~$\varphi_{\lambda k\mu}$ are given by the singular
  expansion
  \begin{equation}
    \label{eq:fourier-coeffs-singular-expansion}
    \frac{x(0) + \mathcal{X}(s)}{s}
    \asymp \sum_{\substack{\lambda\in\sigma(C)\\ \abs{\lambda} > R}} \;
    \sum_{\mu\in\Z}\; \sum_{0\leq k< m_C(\lambda)}
    \frac{\varphi_{\lambda k\mu}}{\bigl(s - \log_{q}\lambda - \frac{2\mu\pi i}{\log q}\bigr)^{k+1}}
  \end{equation}
  for $\Re s > \log_{q}R$.  
\end{theorem}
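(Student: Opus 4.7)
The plan is to recall and adapt the Mellin--Perron strategy from \cite{Heuberger-Krenn:2018:asy-regular-sequences}, from which this theorem is reproduced verbatim. The central idea is to encode the summatory function $X(N)$ as a contour integral of $\mathcal{X}(s)\,N^{s}/s$ and shift the line of integration past the singularities of $\mathcal{X}(s)$; each residue contributes a term of the asymptotic expansion, and the shifted contour yields the error term. The preparatory work is to establish the functional equation for $\mathcal{V}(s)$ and to control its meromorphic continuation.

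First I would derive the functional equation~\eqref{eq:dirichlet-equation}. Splitting $\mathcal{V}(s) = \sum_{n \geq 1} n^{-s}\, v(n)$ according to residues modulo~$q$ and substituting $v(qn + r) = A_{r} v(n)$ gives
\[
 \mathcal{V}(s) = \sum_{1 \leq n < q} n^{-s}\, v(n) + \sum_{0 \leq r < q} A_{r} \sum_{m \geq 1} (qm + r)^{-s}\, v(m).
\]
Expanding $(qm+r)^{-s} = q^{-s} m^{-s} (1 + r/(qm))^{-s}$ by the binomial series and isolating the $k=0$ term produces~\eqref{eq:dirichlet-equation}. For convergence and continuation, I would use the matrix-product bound $\norm{A_{d_{0}} \cdots A_{d_{L-1}}} = O(R^{L})$, available from the simple growth property of $\mathcal{A}$ or from the choice $R > \rho(\mathcal{A})$ together with the absence of eigenvalues of $C$ in the critical annulus. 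This bound combined with~\eqref{eq:xn-lin-rep} gives $\norm{v(n)} = O(n^{\log_{q} R})$ up to logarithmic factors, whence $\mathcal{V}(s)$ converges absolutely for $\Re s > \log_{q} R + 1$. The right-hand side of~\eqref{eq:dirichlet-equation} already converges for $\Re s > \log_{q} R$ because the shifted series $\mathcal{V}(s+k)$ are analytic there, so inverting $I - q^{-s} C$ extends $\mathcal{V}(s)$ meromorphically with poles confined to $q^{s} \in \sigma(C)$.

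Next I would apply Mellin--Perron summation. Starting with a suitably large $\sigma$, the formula
\[
  X(N) = \frac{1}{2\pi i} \int_{\sigma - i\infty}^{\sigma + i\infty} \mathcal{X}(s)\, \frac{N^{s}}{s}\, ds
\]
is shifted to the line $\Re s = \log_{q} R + \varepsilon$. Each eigenvalue $\lambda \in \sigma(C)$ with $\abs{\lambda} > R$ produces a vertical chain of poles at $s = \log_{q}\lambda + 2\mu\pi i/\log q$ for $\mu \in \Z$; collecting the corresponding residues into a Fourier series yields the main term $N^{\log_{q}\lambda}$ multiplied by the $1$-periodic function $\Phi_{\lambda k}(\fractional{\log_{q} N})$, with the singular expansion~\eqref{eq:fourier-coeffs-singular-expansion} identifying the Fourier coefficients $\varphi_{\lambda k \mu}$. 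The Hölder regularity of $\Phi_{\lambda k}$ then follows from a standard estimate of the decay of $\varphi_{\lambda k \mu}$ in~$\mu$, controlled by the growth of $\mathcal{V}(s)$ along vertical lines.

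The principal obstacle is quantitative control of $\mathcal{V}(\sigma + it)$ as $\abs{t} \to \infty$, uniformly in $\sigma$ across the strip $\log_{q} R \leq \sigma \leq \log_{q} R + 1$. One has to iterate the functional equation~\eqref{eq:dirichlet-equation} to bootstrap polynomial bounds on $\mathcal{V}$, and balance these against the density of poles on the critical line in order to justify both the contour shift and the pointwise convergence of the Fourier series. A secondary subtlety is the case in which $\mathcal{A}$ lacks the simple growth property: there the strict inequality $R > \rho(\mathcal{A})$ combined with the absence of eigenvalues of $C$ in $\rho(\mathcal{A}) < \abs{\lambda} \leq R$ is exactly what keeps the residue contributions separated from the error term and makes the shift of contour unambiguous.
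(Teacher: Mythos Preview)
The paper does not contain a proof of this theorem at all: it is quoted verbatim from \cite[Theorem~A]{Heuberger-Krenn:2018:asy-regular-sequences} and used as a black box throughout Sections~\ref{sec:stern-brocot}--\ref{sec:ub}. You correctly recognise this in your opening sentence, so there is nothing in the present paper to compare your sketch against.

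As a sketch of the argument in the cited source your outline is broadly right---functional equation from the linear representation, meromorphic continuation via inversion of $I-q^{-s}C$, Mellin--Perron with contour shift, residues grouped into Fourier series. One small inaccuracy: the Perron integral you write down should involve $(x(0)+\mathcal{X}(s))/s$ rather than $\mathcal{X}(s)/s$, consistent with the singular expansion~\eqref{eq:fourier-coeffs-singular-expansion}; the Dirichlet series starts at $n=1$ while $X(N)$ includes $x(0)$. More substantively, the actual proof in \cite{Heuberger-Krenn:2018:asy-regular-sequences} does not shift a single vertical contour through an infinite chain of poles (which would require delicate uniform vertical bounds you allude to as ``the principal obstacle''); instead it uses a pseudo-Tauberian argument to handle the fluctuation directly and avoid that analytic difficulty. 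So your identification of the obstacle is accurate, but the resolution in the source is different from the bootstrap you propose.
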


\begin{remark}
  \begin{enumerate}
  \item \cite[Theorem~A]{Heuberger-Krenn:2018:asy-regular-sequences}
    only uses the
    simple growth property implicitly; the full details are contained
    in~\cite[Section~6]{Heuberger-Krenn:2018:asy-regular-sequences}. Note that there,
    the only property of the joint spectral radius used
    is \cite[Equation~(7.1)]{Heuberger-Krenn:2018:asy-regular-sequences}.
  \item The given expressions for the Fourier coefficients allow their
    computation with high precision;
    see~\cite[Part~IV]{Heuberger-Krenn:2018:asy-regular-sequences}. Furthermore,
    an implementation is available
    at \url{https://gitlab.com/dakrenn/regular-sequence-fluctuations}. We
    will use this implementation to compute the Fourier coefficients
    for the examples in Sections~\ref{sec:stern-brocot},~\ref{sec:pascal} and~\ref{sec:ub}.
  \item The motivation for analyzing the summatory function instead of
    the sequence itself is the following: The asymptotic behavior of
    regular sequences is often not smooth (which would imply that in any asymptotic expansion as given in~\cite{Heuberger-Krenn:2018:asy-regular-sequences}, everything is absorbed by the error term), whereas the asymptotic
    behavior of the summatory function is.

    However, it is also possible to apply Theorem~\ref{thm:asymp} to a
    $q$-regular sequence~$x$ itself: Let us write
    \begin{equation*}
      x(N) = x(0) + \smashoperator{\sum_{0\leq n < N}}\bigl(x(n+1) - x(n)\bigr).
    \end{equation*}
    So $x$ can be represented as the summatory function of the
    sequence of differences
    \begin{equation*}
      f(n) \coloneqq x(n+1) - x(n),
    \end{equation*}
    which is again $q$-regular by~\cite[Theorems~2.5 and~2.6]{Allouche-Shallit:1992:regular-sequences}. Consequently,
    applying Theorem~\ref{thm:asymp} to~$f$ yields an asymptotic
    analysis for
    \begin{equation*}
      F(N) = \smashoperator{\sum_{0\leq n< N}}f(n) = x(N) - x(0),
    \end{equation*}
    which differs from the asymptotic behavior of~$x$ only by an additive constant.
    \qedhere
  \end{enumerate}
\end{remark}

\subsection{Spectral Results in the General Case}
In this section, we show that in most cases, the asymptotic behavior of a
regular sequence can be deduced directly from a linear representation which is valid
from some offset $n_0\ge 1 > 0$. In these cases, it is not necessary to use Theorem~\ref{thm:offset-correction}
to construct an augmented linear representation valid for all non-negative integers.
So, we will assume that $n_0\ge 1$ because otherwise, there is nothing to do.

We first consider the significant eigenvalues and then
the significant joint spectral radii (significant with respect to
Theorem~\ref{thm:asymp}).

\begin{proposition}\label{cor:n0-same-eigenvalues}
  Let $A_{0}$, \dots, $A_{q-1}$, $\widetilde{A}_{0}$, \dots,
  $\widetilde{A}_{q-1}$ and $n_{0}$ as in
  Theorem~\ref{thm:offset-correction}. Assume that $n_0\ge 1$. Set
  \begin{equation*}
    C \coloneqq \smashoperator{\sum_{0\leq r < q}}A_{r} \qq{and}
    \widetilde{C} \coloneqq \smashoperator{\sum_{0\leq r < q}}\widetilde{A}_{r}.
  \end{equation*}
  Then $\sigma(\widetilde{C}) \subseteq \sigma(C) \cup \set{0, 1}$
  holds. In particular,
  \begin{enumerate}[(a)]
  \item if $n_{0} = 1$, then
    $\sigma(\widetilde{C}) = \sigma(C) \cup \set{1}$ and for all $\lambda\in\C\setminus\set{1}$, we have $m_C(\lambda) = m_{\widetilde{C}}(\lambda)$; and
  \item if $n_{0} \geq 2$, then
    $\sigma(\widetilde{C}) = \sigma(C) \cup \set{0, 1}$ and for all $\lambda\in\C\setminus\set{0,1}$, we have $m_C(\lambda) = m_{\widetilde{C}}(\lambda)$.
  \end{enumerate}
\end{proposition}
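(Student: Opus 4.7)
The plan is to exploit the block triangular structure of $\widetilde{C}$ inherited from Theorem~\ref{thm:offset-correction} and reduce the problem to analyzing the spectrum and Jordan structure of the ``correction block'' obtained by summing the matrices $J_r$.

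Summing the matrices in~\eqref{eq:matrices-A-tilde} over $0 \le r < q$ gives
\begin{equation*}
  \widetilde{C} = \begin{pmatrix} C & W \\ 0 & J \end{pmatrix},
\end{equation*}
where $W \coloneqq \sum_{0\le r<q} W_r$ and $J \coloneqq \sum_{0\le r<q} J_r$. The spectrum of a block upper triangular matrix is the union of the spectra of its diagonal blocks (with the appropriate multiplicities), so $\sigma(\widetilde{C}) = \sigma(C) \cup \sigma(J)$. The strategy is then (i) to compute $\sigma(J)$ and (ii) to match Jordan block sizes outside $\sigma(J)$.

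For step~(i), I would compute $J$ entrywise from~\eqref{eq:matrix-J-corr-n0}: the $(k,j)$-entry of $J$ equals $\sum_{0\le r<q}\iverson{jq = k-r} = \iverson{j = \lfloor k/q\rfloor}$. Since $q\ge 2$, we have $\lfloor k/q\rfloor \le k$ with equality only at $k=0$, so $J$ is lower triangular with diagonal $(1,0,\dots,0)$. Its characteristic polynomial is therefore $(x-1)x^{n_0-1}$, whence $\sigma(J) = \set{1}$ when $n_0=1$ and $\sigma(J) = \set{0,1}$ when $n_0\ge 2$. This already establishes the set-theoretic spectrum assertions in both cases (a) and (b), as well as the inclusion $\sigma(\widetilde{C}) \subseteq \sigma(C)\cup\set{0,1}$ in general.

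For step~(ii), I would invoke the standard fact that if $M = \bigl(\begin{smallmatrix}A & B\\ 0 & D\end{smallmatrix}\bigr)$ and $\lambda\notin\sigma(D)$, then $D-\lambda I$ is invertible, so any $(u,v)^\top\in\ker(M-\lambda I)^k$ must satisfy $(D-\lambda I)^k v = 0$, forcing $v=0$, and consequently $(A-\lambda I)^k u = 0$. Hence $\dim\ker(M-\lambda I)^k = \dim\ker(A-\lambda I)^k$ for every $k\ge 0$, which means that $M$ and $A$ share the same Jordan block sizes at $\lambda$; in particular $m_M(\lambda)=m_A(\lambda)$. Applied with $M=\widetilde{C}$, $A=C$, $D=J$ and $\lambda\notin\sigma(J)$, this completes both (a) and (b). The only mildly delicate point is the explicit computation of $J$ and its triangular shape; once that is in place, the block-triangular argument is entirely routine and there is no real obstacle.
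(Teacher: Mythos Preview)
Your proposal is correct and follows essentially the same route as the paper: both exploit the block upper triangular form $\widetilde{C}=\bigl(\begin{smallmatrix}C&W\\0&J\end{smallmatrix}\bigr)$, identify $J$ as lower triangular with diagonal $(1,0,\dots,0)$, and read off the spectrum. You recompute this triangular shape of~$J$ directly from~\eqref{eq:matrix-J-corr-n0}, whereas the paper simply cites the statement about $\diag(J_r)$ already recorded in Theorem~\ref{thm:offset-correction}; and you spell out the Jordan-block argument via $\dim\ker(M-\lambda I)^k$ that the paper leaves implicit---but these are differences in level of detail, not in approach.
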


Before stating the second result, we state a lemma dealing with the simple
growth property for sets of block triangular matrices. This is a refinement
of Jungers~\cite[Proposition~1.5]{Jungers:2009:joint-spectral-radius}, which deals with
the joint spectral radius only (here restated as the first statement of the lemma).

\begin{lemma}
  \label{lemma:simple-growth-property-block-triangular-matrices}
  Let $\calG$ be a finite set of $(D_1+D_2+\cdots +D_s)\times (D_1+D_2+\cdots +
  D_s)$ block upper triangular
  matrices. For $G\in\calG$ write
  \begin{equation*}
    G =
    \begin{pmatrix}
      G^{(11)}& G^{(12)}&\ldots&G^{(1s)}\\
      0& G^{(22)}&\ldots&G^{(2s)}\\
      \vdots&\vdots&\ddots&\vdots\\
      0& 0&\ldots& G^{(ss)}
    \end{pmatrix}
  \end{equation*}
  where the block~$G^{(ij)}$ is a $D_i\times D_j$ matrix for $1\le i\le j\le s$.
  Set $\calG^{(i)}\coloneqq \{G^{(ii)} \mid G\in\calG\}$.
  Then $\rho(\calG)=\max_{1\le i\le s}\rho(\calG^{(i)})$.

  If there is a unique
  $i_0\in\{1, \ldots, s\}$ such that $\rho(\calG^{(i_0)})=\rho(\calG)$ and
  $\calG^{(i_0)}$ has the simple growth property, then $\calG$ has the simple
  growth property.
\end{lemma}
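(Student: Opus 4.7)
The first assertion is Jungers~\cite[Proposition~1.5]{Jungers:2009:joint-spectral-radius} and could be reproven in a few lines: the block identity $(G_1\cdots G_k)^{(ii)}=G_1^{(ii)}\cdots G_k^{(ii)}$ yields $\rho(\calG)\ge\rho(\calG^{(i)})$ for every $i$, while the reverse inequality falls out of the path expansion used below. For the main assertion, the plan is to expand each block of a product as
\[
  (G_1\cdots G_k)^{(ij)} = \sum_{i=l_0\le l_1\le\cdots\le l_k=j} G_1^{(l_0,l_1)}G_2^{(l_1,l_2)}\cdots G_k^{(l_{k-1},l_k)}
\]
and to bound each summand. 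A useful general input, which follows from the infimum characterization of $\rho$ recalled in Remark~\ref{remark:joint-spectral-radius-etc} by a standard division-with-remainder argument, is that for any finite set $\mathcal{H}$ of matrices and any $\varepsilon>0$ there exists $K_\varepsilon$ with $\norm{H_1\cdots H_n}\le K_\varepsilon(\rho(\mathcal{H})+\varepsilon)^n$ for all $n$ and all $H_1,\ldots,H_n\in\mathcal{H}$.

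Each non-decreasing path is encoded by the distinct levels $l^{(1)}<\cdots<l^{(p)}$ it visits (with $l^{(1)}=i$, $l^{(p)}=j$, so $p\le s$) and the numbers $b_1,\dots,b_p\ge 0$ of diagonal steps spent at them, subject to $\sum_{q}b_q=k-(p-1)$. Set $\rho_{\max}\coloneqq\max_{l\ne i_0}\rho(\calG^{(l)})<\rho(\calG)$ and pick $\varepsilon>0$ with $\rho_{\max}+\varepsilon<\rho(\calG)$. Invoking the simple growth property of $\calG^{(i_0)}$ at level $i_0$, the uniform $\varepsilon$-bound at the other levels, and the crude bound $M^{s-1}$ on the $p-1$ transition entries (with $M\coloneqq\max_{G\in\calG}\norm{G}$), one obtains for a single path a bound of the form (constant)~times $\rho(\calG)^{b_{q^{\ast}}}(\rho_{\max}+\varepsilon)^{k-(p-1)-b_{q^{\ast}}}$ if the path visits $i_0$ at some index $q^{\ast}$, and $(\rho_{\max}+\varepsilon)^{k-(p-1)}$ otherwise. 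Paths that avoid $i_0$ number only $O(k^{s-1})$ in total and so contribute $o(\rho(\calG)^k)$ altogether, which is harmless.

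The main obstacle is that the count of paths through $i_0$ with a given level set also grows polynomially (of degree $p-1$) in $k$, so a naive bound would destroy the simple growth property. The trick is to sum over such paths cleverly: factoring out $\rho(\calG)^{k-(p-1)}$ leaves the factor $\eta^{S}$ with $S\coloneqq k-(p-1)-b_{q^{\ast}}=\sum_{q\ne q^{\ast}}b_q$ and $\eta\coloneqq(\rho_{\max}+\varepsilon)/\rho(\calG)<1$; for a fixed level set the number of admissible compositions yielding a given $S$ equals $\binom{S+p-2}{p-2}$, so summation over $S\ge 0$ produces the convergent series $(1-\eta)^{-(p-1)}$, in which the polynomial count is absorbed precisely by the geometric factor coming from the strictly smaller spectral radii at the non-$i_0$ levels. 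This is exactly where the uniqueness of $i_0$ is used. Adding the resulting $O(\rho(\calG)^k)$ bound over the finitely many admissible level sets and over all blocks $(i,j)$, and invoking equivalence of norms on $\C^{D\times D}$, yields $\norm{G_1\cdots G_k}=O(\rho(\calG)^k)$, i.e., the simple growth property for $\calG$.
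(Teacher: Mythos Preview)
Your argument is correct; the path expansion, the encoding by level sets and durations, and the geometric-series trick that absorbs the polynomial count of compositions all go through as you describe.

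The paper takes a different, somewhat tidier route. Rather than expanding products into all $s$-level paths at once, it first isolates the two-block case as a separate auxiliary lemma: if $\norm{G_1^{(ii)}\cdots G_k^{(ii)}}=O(R_i^k)$ for $i\in\{1,2\}$ with $R_1\neq R_2$, then $\norm{G_1\cdots G_k}=O(\max\{R_1,R_2\}^k)$. The proof of that lemma is exactly your summation argument specialized to a single off-diagonal block, so the key geometric series appears in the same place. The general statement then follows by choosing pairwise distinct constants $R_i>\rho(\calG^{(i)})$ for $i\neq i_0$ and $R_{i_0}=\rho(\calG^{(i_0)})$ (using the simple growth hypothesis), and applying the two-block lemma $s-1$ times to successively absorb one diagonal block at a time. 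Your direct approach avoids the induction at the price of a more elaborate combinatorial bookkeeping; the paper's approach hides the bookkeeping inside the iteration but requires the artificial perturbation to make the $R_i$ distinct at each step. Both arguments ultimately exploit the uniqueness of $i_0$ in the same way: it guarantees a strict ratio $\eta<1$ so that the geometric factor beats the polynomial growth in the number of paths.
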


We now state the result on the joint spectral radius in the context of
Theorem~\ref{thm:offset-correction}.

\begin{proposition}
  \label{prop:jsr-A}
  Let
  $\mathcal{A} \coloneqq \set{A_{0},\dots, A_{q-1}}$,
  $\widetilde{\mathcal{A}} \coloneqq
      \set{\widetilde{A}_{0},\dots,\widetilde{A}_{q-1}}$ and
  $\mathcal{J} \coloneqq \set{J_{0}, \dots, J_{q-1}}$
    be the sets of matrices and $n_0$ the offset as given
  in Theorem~\ref{thm:offset-correction}, and assume $n_0\ge 1$. Then the joint spectral radii
  of~$\widetilde{\mathcal{A}}$ and~$\mathcal{J}$ satisfy
  \begin{equation}
    \label{eq:prop-jsr-A-1}
    \rho(\widetilde{\mathcal{A}}) = \max\set[\big]{\rho(\mathcal{A}), 1} \qq{and} \rho(\mathcal{J})= 1,
  \end{equation}
  respectively. In particular, if $\rho(\mathcal{A}) \geq 1$ holds, then we have
  $\rho(\widetilde{\mathcal{A}}) = \rho(\mathcal{A})$.

  Furthermore, if $\rho(\mathcal{A}) > 1$ holds and $\mathcal{A}$ has
  the simple growth property, then $\widetilde{\mathcal{A}}$ has the simple
  growth property.
\end{proposition}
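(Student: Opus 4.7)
The plan is to apply Lemma~\ref{lemma:simple-growth-property-block-triangular-matrices} to the block upper triangular matrices $\widetilde{A}_r$ as given by~\eqref{eq:matrices-A-tilde}, whose diagonal blocks are precisely $A_r$ and $J_r$. Thus the first statement of that lemma will immediately yield $\rho(\widetilde{\mathcal{A}}) = \max\set{\rho(\mathcal{A}), \rho(\mathcal{J})}$, and the whole problem reduces to computing $\rho(\mathcal{J}) = 1$ and then invoking the simple-growth part of the lemma.

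For $\rho(\mathcal{J}) = 1$ I would argue the two bounds separately. The lower bound is immediate: by the explicit description of $\diag(J_r)$ in Theorem~\ref{thm:offset-correction}, $J_0$ is lower triangular with a $1$ as its first diagonal entry, so $\rho(J_0) = 1$, and hence $\rho(\mathcal{J}) \geq \rho(J_0) = 1$ from the standard bound of the joint spectral radius by the spectral radius of a single member. For the upper bound, I would analyze the combinatorial structure from~\eqref{eq:matrix-J-corr-n0}: for each column index $j$, the entry $(J_r)_{k,j}$ is non-zero iff $k = jq + r$, so each column of $J_r$ contains at most one non-zero entry, which then equals $1$. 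By induction on the length~$k$ of a product $P = J_{r_1}\cdots J_{r_k}$, this property is preserved, because composition just iterates the affine map $j\mapsto jq + r$, and each column of $P$ still has at most one $1$. Consequently, the column-sum norm satisfies $\norm{P}_1 \leq 1$ uniformly in $k$ and in the choice of $r_1, \dots, r_k$, which forces $\rho(\mathcal{J}) \leq 1$.

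With $\rho(\mathcal{J}) = 1$ established, the identity $\rho(\widetilde{\mathcal{A}}) = \max\set{\rho(\mathcal{A}), 1}$ follows directly from Lemma~\ref{lemma:simple-growth-property-block-triangular-matrices}, and the ``in particular'' assertion (that $\rho(\mathcal{A}) \geq 1$ implies $\rho(\widetilde{\mathcal{A}}) = \rho(\mathcal{A})$) is then trivial. For the final statement on simple growth, note that if $\rho(\mathcal{A}) > 1 = \rho(\mathcal{J})$, then $\rho(\widetilde{\mathcal{A}}) = \rho(\mathcal{A})$ is attained \emph{uniquely} among the two diagonal block sets by $\mathcal{A}$. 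Since $\mathcal{A}$ has the simple growth property by hypothesis, the second statement of Lemma~\ref{lemma:simple-growth-property-block-triangular-matrices} transfers the simple growth property to $\widetilde{\mathcal{A}}$.

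The main obstacle is the upper bound $\rho(\mathcal{J}) \leq 1$; this is the one place where the concrete structure of the matrices $J_r$ from~\eqref{eq:matrix-J-corr-n0} has to be exploited. Everything else is a formal application of the block-triangular lemma, and the bound itself becomes routine once one observes that $k = jq + r$ determines~$k$ uniquely from $(j, r)$, so that multiplication of $J$-matrices can never increase the number of non-zero entries per column.
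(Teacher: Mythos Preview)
Your proof is correct and follows the same overall strategy as the paper: apply Lemma~\ref{lemma:simple-growth-property-block-triangular-matrices} to the block structure~\eqref{eq:matrices-A-tilde} of the $\widetilde{A}_r$, which reduces everything to showing $\rho(\mathcal{J})=1$, and then invoke the simple-growth part of that lemma for the final statement. The only place you diverge is in the upper bound $\rho(\mathcal{J})\le 1$: the paper simply observes that each $J_r$ is lower triangular with diagonal entries in $\{0,1\}$ (as recorded in Theorem~\ref{thm:offset-correction}) and applies Lemma~\ref{lemma:simple-growth-property-block-triangular-matrices} once more, now with $1\times 1$ diagonal blocks, to conclude $\rho(\mathcal{J})\le\max\{0,1\}=1$. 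Your column-sum argument is equally valid and arguably more self-contained; note, however, that the induction tracking the column structure through products is unnecessary---once you have $\norm{J_r}_1\le 1$ for each $r$, submultiplicativity of the operator norm already gives $\norm{J_{r_1}\cdots J_{r_k}}_1\le 1$ for all $k$.
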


Combining Propositions~\ref{cor:n0-same-eigenvalues} and~\ref{prop:jsr-A} with Theorem~\ref{thm:asymp} implies that
the asymptotics can also be determined by using the matrices~$A_{0}$,
\dots, $A_{q-1}$ (which do not contain the correction for the offset; see
Theorem~\ref{thm:offset-correction}) instead of the
matrices~$\widetilde{A}_{0}$, \ldots, $\widetilde{A}_{q-1}$ from the linear
representation.

Note that if $\rho(\mathcal{A})<1$, then the error
in~\eqref{eq:main-asymptotic-expansion} is $o(1)$. This implies that
adding constants (created by correction terms if the recurrence
relation is not valid for some $n\ge 0$) is visible in the asymptotic
expansion.

\subsection{Spectral Results in the Special Case}

Next, we are interested in the eigenvalues of the matrix
$C = \sum_{0\leq r < q}A_{r}$ for the special case. It turns out that
the eigenvalues of~$C$ can be obtained from the recurrence
relations~\eqref{eq:def-q-recursive} more directly than via the detour to
linear representations.

Note that also here we assume the offset to be $n_{0} = 0$ for the sake of
readability, analogous to Section~\ref{sec:recursive-special}. The following
results can be generalized easily for arbitrary offset.

\begin{proposition}
  \label{cor:sprectrum-C-Bs}
  Let $A_{0}$, \ldots, $A_{q-1}$ and $B_{0}$, \ldots, $B_{q-1}$ be the matrices
  as given in Theorem~\ref{prop:recursive-special-case}, let~$M = m+1$ and~$m$ be the
  exponents of the corresponding $q$-recursive sequence with $m \geq 1$ and set
  $C = \sum_{0\leq r < q}A_{r}$. Then we have
  \begin{equation*}
    \sigma(C) = \sigma(B_{0} + \cdots + B_{q-1})\cup\set{0}.
  \end{equation*}
  Moreover, we have
  $m_{C}(\lambda) = m_{B_{0} + \cdots + B_{q-1}}(\lambda)$
  for all $\lambda\in\C\setminus\set{0}$.
\end{proposition}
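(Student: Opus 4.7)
The plan is to exploit the block upper triangular structure of the matrices $A_r$ provided by Theorem~\ref{prop:recursive-special-case}. Summing~\eqref{eq:prop-recursive-matrices} over $0 \le r < q$ yields
\[
C = \sum_{0\le r<q} A_r = \begin{pmatrix} N & X \\ 0 & B \end{pmatrix},
\]
where $N \coloneqq \sum_{r} J_{r0}$, $X \coloneqq \sum_{r} J_{r1}$ and $B \coloneqq \sum_{r} B_r$. By Theorem~\ref{prop:recursive-special-case} each $J_{r0}$ is upper triangular with zeros on the diagonal, so the same holds for the sum~$N$; in particular $N$ is nilpotent with $\sigma(N) = \set{0}$. (This is also where the hypothesis $m \ge 1$ is used: it guarantees that the $\frac{q^m-1}{q-1}$-dimensional block $N$ is actually present.) The spectral identity $\sigma(C) = \sigma(N) \cup \sigma(B) = \set{0} \cup \sigma(B)$ then follows from the standard fact that the characteristic polynomial of a block upper triangular matrix factors as the product of the characteristic polynomials of its diagonal blocks.

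For the statement on Jordan block sizes, I fix $\lambda \neq 0$. A straightforward induction on $k$ shows that $(C - \lambda I)^k$ is block upper triangular with diagonal blocks $(N - \lambda I)^k$ and $(B - \lambda I)^k$. Since $\lambda \notin \sigma(N) = \set{0}$, the block $(N - \lambda I)^k$ is invertible for every $k \ge 0$. Reading the condition $(v_1, v_2)^{\top} \in \ker(C - \lambda I)^k$ block-wise, the second component must satisfy $(B - \lambda I)^k v_2 = 0$, and then $v_1$ is uniquely determined by $v_2$ through the invertibility of $(N - \lambda I)^k$. Consequently
\[
\dim \ker(C - \lambda I)^k = \dim \ker(B - \lambda I)^k
\]
for every $k \ge 0$, so the smallest $k$ at which this dimension stabilises agrees for $C$ and $B$. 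Characterising $m_M(\lambda)$ for any square matrix $M$ as this stabilisation index yields $m_C(\lambda) = m_B(\lambda)$, which is the desired identity.

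I do not anticipate any substantial obstacle: the argument is elementary linear algebra, and the only genuinely structural input is the strict upper triangularity of the blocks $J_{r0}$ provided by Theorem~\ref{prop:recursive-special-case}, which propagates to their sum~$N$ and pins down $\sigma(N)$ entirely. Everything else is bookkeeping around block upper triangular matrices and kernels of their powers.
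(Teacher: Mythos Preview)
Your proof is correct and follows essentially the same approach as the paper: both exploit the block upper triangular form of $C$ from Theorem~\ref{prop:recursive-special-case} and the nilpotency of $\sum_r J_{r0}$ to identify the spectrum. The paper's proof actually leaves the Jordan block statement $m_C(\lambda)=m_B(\lambda)$ implicit, whereas you supply an explicit kernel-dimension argument for it; this addition is welcome and correct.
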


\begin{proposition}
  \label{cor:jsr-special-case}
  Let
  $\mathcal{A} \coloneqq \set{A_{0}, \dots, A_{q-1}}$,
  $\mathcal{J} \coloneqq \set{J_{00}, \dots, J_{(q-1)0}}$ and
  $\mathcal{B} \coloneqq \set{B_{0}, \dots, B_{q-1}}$
  be the sets of matrices as given in
  Theorem~\ref{prop:recursive-special-case}. Then the joint spectral radii
  of~$\mathcal{A}$ and~$\mathcal{J}$ satisfy
  \begin{equation*}
    \rho(\mathcal{A}) = \rho(\mathcal{B}) \qq{and}
    \rho(\mathcal{J}) = 0,
  \end{equation*}
  respectively.

  Furthermore, if $\rho(\mathcal{B}) > 0$ holds and $\mathcal{B}$ has
  the simple growth property, then $\mathcal{A}$ has the simple
  growth property.
\end{proposition}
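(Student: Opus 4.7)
My plan is to read off the block upper triangular structure
\[
A_{r} = \begin{pmatrix} J_{r0} & J_{r1}\\ 0 & B_{r} \end{pmatrix}
\]
from~\eqref{eq:prop-recursive-matrices} and apply Lemma~\ref{lemma:simple-growth-property-block-triangular-matrices}. With the notation of that lemma, the diagonal blocks of $\mathcal{A}$ yield $\mathcal{A}^{(1)}=\mathcal{J}$ and $\mathcal{A}^{(2)}=\mathcal{B}$, so the first statement of the lemma immediately gives $\rho(\mathcal{A})=\max\set{\rho(\mathcal{J}),\rho(\mathcal{B})}$. Hence, once $\rho(\mathcal{J})=0$ is established, $\rho(\mathcal{A})=\rho(\mathcal{B})$ follows.

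The key input is therefore $\rho(\mathcal{J})=0$. By Theorem~\ref{prop:recursive-special-case}(b) each matrix $J_{r0}$ is strictly upper triangular of fixed size $D_{1}\coloneqq(q^{m}-1)/(q-1)$. A standard "superdiagonal level" argument then shows that any product of at least $D_{1}$ such matrices vanishes: if $N_{j}$ denotes the set of matrices whose nonzero entries lie only in positions $(i,i')$ with $i'\geq i+j$, then $N_{j}N_{j'}\subseteq N_{j+j'}$, and $N_{D_{1}}=\set{0}$ for matrices of size $D_{1}$; since $\mathcal{J}\subseteq N_{1}$, every product $J_{i_{1}0}\cdots J_{i_{k}0}$ with $k\geq D_{1}$ equals the zero matrix. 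This forces $\rho(\mathcal{J})=0$, and it simultaneously shows that $\mathcal{J}$ has the simple growth property (the quantity $\norm{J_{i_{1}0}\cdots J_{i_{k}0}}$ is identically $0$ for $k\geq D_{1}$).

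For the second assertion, assume $\rho(\mathcal{B})>0$ and that $\mathcal{B}$ has the simple growth property. Then $\rho(\mathcal{B})>0=\rho(\mathcal{J})$, so the index $i_{0}=2$ is the \emph{unique} one at which the maximum of the diagonal-block joint spectral radii is attained. The second statement of Lemma~\ref{lemma:simple-growth-property-block-triangular-matrices} therefore applies and yields the simple growth property for~$\mathcal{A}$, completing the proof.

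I do not expect any real obstacle here: the block-triangular lemma does all the heavy lifting, and the only genuine computation is the nilpotency argument for~$\mathcal{J}$, which is essentially combinatorial and relies only on the structural property of $J_{r0}$ already recorded in Theorem~\ref{prop:recursive-special-case}(b). The mild subtlety is to make sure that ``simple growth'' is interpreted correctly when $\rho(\mathcal{J})=0$, namely that products are eventually identically zero, which is exactly what the superdiagonal argument delivers.
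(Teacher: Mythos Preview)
Your proof is correct and follows essentially the same route as the paper: both read off the block upper triangular structure~\eqref{eq:prop-recursive-matrices} and invoke Lemma~\ref{lemma:simple-growth-property-block-triangular-matrices} for $\rho(\mathcal{A})=\max\set{\rho(\mathcal{J}),\rho(\mathcal{B})}$ and for the simple growth property. The only cosmetic difference is that the paper derives $\rho(\mathcal{J})=0$ by applying the same lemma once more to the strictly upper triangular $J_{r0}$ (viewed as block triangular with $1\times1$ zero diagonal blocks), whereas you give the equivalent direct nilpotency argument.
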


The two propositions of this section provide the possibility to obtain the asymptotics of
the summatory function without knowing a linear representation of the sequence;
the asymptotics are fully determined by the matrices~$B_{0}$, \ldots, $B_{q-1}$.

\subsection{Functional Equation for the Dirichlet Series in the Special Case}
Theorem~\ref{thm:asymp} indicates that functional equations for the Dirichlet
series corresponding to the sequence of interest are essential for
computing Fourier coefficients of the periodic fluctuations. We now give a
variant for our special case of a $q$-recursive sequence which does not require constructing the $q$-linear representation first.

\begin{proposition}
  \label{prop:dirichlet}
  Let~$x$ be a $q$-recursive sequence with exponents~$M=m+1$ and~$m$, index shift bounds
  $\ell = 0$ and $u = q^{m} - 1$ and coefficients $(c_{j,k})_{0\leq j < q^{m+1}, 0\leq k < q^{m}}$, and
  let~$B_{0}$, \dots, $B_{q-1}$ be the matrices introduced
  in~\eqref{eq:prop-recursive-Bs}.
  Let $\rho > 0$ be such that
  $x(n) = O(n^{\log_{q}R})$ as $n \to \infty$ holds for all $R > \rho$, and let $\eta \geq 1$ be an integer. We define the Dirichlet
  series
  \begin{equation}
    \label{eq:dirichlet-special-case}
    \mathcal{X}_{j}(s) \coloneqq \smashoperator{\sum_{n \geq \eta}}\ \frac{x(q^{m}n + j)}{(q^{m}n + j)^{s}}
    = \ \smashoperator{\sum_{n\geq q^{m}\eta + j}}\ \frac{x(n)\iverson{n\equiv j \tpmod{q^{m}}}}{n^{s}}
  \end{equation}
  for $0\leq j < q^{m}$ and $\Re s>\log_q \rho + 1$ and set
  \begin{equation*}
    \mathcal{X}(s) \coloneqq
    \begin{pmatrix}
      \mathcal{X}_{0}(s)\\
      \vdots\\
      \mathcal{X}_{q^{m}-1}(s)
    \end{pmatrix}.
  \end{equation*}
  Then the functional equation
  \begin{equation}
    \label{eq:prop-functional-equation}
    \bigl(I - q^{-s}(B_{0} + \cdots + B_{q-1})\bigr)\mathcal{X}(s) =
    \begin{pmatrix}
      \mathcal{Y}_{0}(s)\\
      \vdots\\
      \mathcal{Y}_{q^{m}-1}(s)
    \end{pmatrix}
  \end{equation}
  holds for $\Re s > \log_{q}\rho$ with
  \begin{equation}
    \label{prop:dirichlet-error}
    \mathcal{Y}_{j}(s) = q^{-s}\sum_{k=0}^{q^{m}-1}\Biggl(\sum_{n\geq 1}\binom{-s}{n}
    \biggl(\sum_{\mu=0}^{q-1}c_{\mu q^{m} + j,k}\Bigl(\frac{\mu q^{m} + j}{q} - k\Bigr)^{n}\biggr)\mathcal{X}_{k}(s+n)\Biggr)
    +\ \smashoperator[l]{\sum_{\eta\leq n < q\eta}}\frac{x(q^{m}n + j)}{(q^{m}n + j)^{s}}.
  \end{equation}
  Moreover, $\mathcal{Y}_{j}(s)$ is analytic for $\Re s>\log_q \rho$ and
  all $0\leq j < q^{m}$, and, in particular,
  $\mathcal{X}(s)$ is meromorphic for $\Re s>\log_q \rho$ and can
  only have poles~$s$ where $q^{s}\in\sigma(B_{0} + \cdots + B_{q-1})$.
\end{proposition}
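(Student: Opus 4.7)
My plan is to derive the functional equation by splitting each series $\mathcal{X}_j(s)$ into a finite ``head'' (the range $\eta\le n<q\eta$) and an infinite ``tail'' ($n\ge q\eta$), processing the tail by means of the recurrence relation, and then expanding the resulting denominators via the binomial series to exhibit the $\mathcal{X}_k(s+n)$ for $n\ge 1$. All identities are first established in the region of absolute convergence $\Re s>\log_{q}\rho+1$, and the extension to $\Re s>\log_{q}\rho$ is obtained by proving that $\mathcal{Y}_j(s)$ is analytic there.

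First I would split indices as $s=\mu q^m+j$ with $0\le\mu<q$ and $0\le j<q^m$, so that the hypothesis takes the form $x(q^{m+1}n+\mu q^m+j)=\sum_{k=0}^{q^m-1}c_{\mu q^m+j,k}\,x(q^mn+k)$. For $n\ge q\eta$, writing $n=q\nu+\mu$ with $\nu\ge\eta$ and $0\le\mu<q$ gives a bijection, so the tail of $\mathcal{X}_j(s)$ equals $\sum_{\mu=0}^{q-1}\sum_{\nu\ge\eta}x(q^{m+1}\nu+\mu q^m+j)(q^{m+1}\nu+\mu q^m+j)^{-s}$. Applying the recurrence replaces the numerator by $\sum_k c_{\mu q^m+j,k}\,x(q^m\nu+k)$, which now involves the ``right'' arguments $q^m\nu+k$ but with the ``wrong'' denominator.

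To correct the denominator, I use the identity $q^{m+1}\nu+\mu q^m+j=q(q^m\nu+k)+(\mu q^m+j-qk)$ and expand via the binomial series
\[
(q^{m+1}\nu+\mu q^m+j)^{-s}=q^{-s}\sum_{n\ge 0}\binom{-s}{n}\Bigl(\frac{\mu q^m+j}{q}-k\Bigr)^{n}(q^m\nu+k)^{-s-n}.
\]
Since $|\mu q^m+j-qk|\le q^{m+1}-1<q^{m+1}\nu+qk$ for every $\nu\ge\eta\ge 1$, the expansion converges absolutely. Isolating the term $n=0$ gives exactly the matrix contribution $q^{-s}(B_0+\cdots+B_{q-1})\mathcal{X}(s)$ (using $(B_\mu)_{j,k}=c_{\mu q^m+j,k}$ from \eqref{eq:prop-recursive-Bs}); the terms with $n\ge 1$, after interchanging the sums over $\nu$ and $n$, produce precisely the double series in the definition \eqref{prop:dirichlet-error} of $\mathcal{Y}_j(s)$. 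Combining with the head $\sum_{\eta\le n<q\eta}x(q^mn+j)(q^mn+j)^{-s}$ yields \eqref{eq:prop-functional-equation}.

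The remaining step and the main obstacle is justifying the interchange of summations and proving that $\mathcal{Y}_j(s)$ is analytic on $\Re s>\log_{q}\rho$. For any $R>\rho$, the hypothesis gives $|x(q^m\nu+k)|=O((q^m\nu+k)^{\log_q R})$, so $\mathcal{X}_k(s+n)$ is absolutely convergent whenever $\Re s+n>\log_q R+1$, hence for all $n\ge 1$ once $\Re s>\log_q R$. The ratio estimate $|\mu q^m+j-qk|/(q(q^m\nu+k))\le (q^{m+1}-1)/(q^{m+1}\eta)<1$ for $\nu\ge\eta\ge 1$ dominates the inner $n$-sum by a convergent geometric series whose common ratio is uniformly less than $1$, which together with the polynomial bound on $\binom{-s}{n}$ produces a summable majorant on any compact subset of $\{\Re s>\log_q\rho\}$. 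This both justifies the interchange of the $n$- and $\nu$-sums and shows, by the Weierstrass majorant test, that $\mathcal{Y}_j(s)$ is holomorphic on $\Re s>\log_q\rho$. Solving \eqref{eq:prop-functional-equation} then gives $\mathcal{X}(s)=(I-q^{-s}(B_0+\cdots+B_{q-1}))^{-1}\mathcal{Y}(s)$, so that $\mathcal{X}(s)$ is meromorphic there with poles only where $\det(I-q^{-s}(B_0+\cdots+B_{q-1}))=0$, i.e., where $q^{s}\in\sigma(B_0+\cdots+B_{q-1})$.
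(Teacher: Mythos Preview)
Your proposal is correct and follows essentially the same route as the paper: split off the head $\eta\le n<q\eta$, reindex the tail via $n=q\nu+\mu$, apply the recurrence, and expand the shifted denominator by the binomial series to extract the $n=0$ term as $q^{-s}(B_0+\cdots+B_{q-1})\mathcal{X}(s)$ and the $n\ge 1$ terms as $\mathcal{Y}_j(s)$. The only cosmetic difference is that the paper outsources the binomial expansion and the analyticity argument to \cite[Lemma~6.3]{Heuberger-Krenn:2018:asy-regular-sequences}, whereas you sketch the required estimates directly; your bound $|\mu q^{m}+j-qk|\le q^{m+1}-1<q^{m+1}\eta\le q(q^{m}\nu+k)$ is exactly the hypothesis that lemma needs.
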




\section{Stern's Diatomic Sequence}
\label{sec:stern-brocot}

\subsection{Introduction of the Sequence}
We start our detailed study of particular sequences
by studying a sequence which has a long history, namely the
so-called\footnote{Note that this sequence has a strong connection to
  Stern--Brocot trees, which were first discovered independently by
  Stern~\cite{Stern:1858:zahlentheoretische-funktion} and
  Brocot~\cite{Brocot:1862:calcul}. This is the reason
  that Stern's diatomic sequence is also known as \emph{Stern--Brocot
    sequence.}}  \emph{Stern's diatomic sequence};
see~\oeis{A002487}. After
its earliest introduction by
Stern~\cite{Stern:1858:zahlentheoretische-funktion} in 1858, the
sequence has been studied thoroughly; see Northshield~\cite{Northshield:stern-diatomic} and the references therein, and also
Coons and Shallit~\cite{Coons-Shallit:2011:pattern-sequence-approach-Stern},
Leroy, Rigo and Stipulanti~\cite{Leroy-Rigo-Stipulanti:2017:digital-sequences-exotic}.

Stern's diatomic sequence $d$ is defined
by\footnote{%
  Strictly speaking, $d(0) = 0$ follows from \eqref{eq:sb-def:odd} for $n = 0$.}
$d(0) = 0$, $d(1) = 1$ and
\begin{subequations}
    \label{eq:sb-def}
  \begin{align}
    d(2n) &= d(n), \label{eq:sb-def:even}\\
    d(2n + 1) &= d(n) + d(n+1) \label{eq:sb-def:odd}
  \end{align}
\end{subequations}
for all $n \geq 0$. The first few terms of~$d$ are given in
Table~\ref{tab:sb}.

\begin{table}[htbp]
  \centering
  \begin{tabular}{c|cccccccccccccccccccc}
    $n$ & $0$ & $1$ & $2$ & $3$ & $4$ & $5$ & $6$ & $7$ & $8$ & $9$ & $10$ & $11$ & $12$ & $13$ & $14$ & $15$\\\hline
    $d(n)$ & $0$ & $1$ & $1$ & $2$ & $1$ & $3$ & $2$ & $3$ & $1$ & $4$ & $3$ & $5$ & $2$ & $5$ & $3$ & $4$
  \end{tabular}
  \caption{First few elements of Stern's diatomic sequence~$d$}
  \label{tab:sb}
\end{table}

\subsection{Combinatorial Interpretations of the Sequence}

There are several combinatorial interpretations of Stern's diatomic
sequence. In the following, we give a short overview of the most
interesting connections to combinatorial settings.
\begin{enumerate}
\item Let us call the word $d_{L-1}\ldots d_{0}$ over the alphabet
  $\set{0,1,2}$ a \emph{hyperbinary representation} of some~$n\in\N_{0}$ if
  $n = \sum_{0\leq i<L}d_{i}2^{i}$ and $d_{L-1}\neq 0$. Then the
  number of different hyperbinary representations of~$n$ is equal
  to~$d(n+1)$ for all $n \geq 0$;
  see Northshield~\cite[Theorem~3.1]{Northshield:stern-diatomic}.
\item Let $\stirling{n}{r}$ denote the \emph{Stirling partition numbers},
  i.e., $\stirling{n}{r}$ is the number of different partitions of the
  set~$\set{1,\dots,n}$ in exactly $r$~non-empty
  subsets. Then~$d(n)$ equals the number of integers $r\in\N_{0}$ such that
  $\stirling{n}{2r}$ is even and non-zero; see Carlitz~\cite{Carlitz:1964:partitions-stirling}.
\item Let $F(n)$ be the $n$th \emph{Fibonacci number}. Then~$d(n)$ is equal
  to the number of different representations of~$n$ as a sum of
  distinct Fibonacci numbers~$F(2k)$ with $k\in\N_{0}$;
  see Bicknell-Johnson~\cite{Bicknell:2003:stern-diatomic-fibonacci}.
\item An \emph{alternating bit set} of some integer~$n\in\N_{0}$ is a subset~$A$ of
  the positions in the binary expansion of~$n$ such that
  \begin{itemize}
  \item the bits of the binary expansion of~$n$ at positions which lie
    in~$A$ are alternating between~$1$ and~$0$,
  \item the most significant bit at a position which lies in~$A$ is
    a~$1$, and
  \item the least significant bit at a position which lies in~$A$ is
    a~$0$.
  \end{itemize}
  In particular, we also allow $A = \emptyset$ to be an alternating
  bit set. Note that this definition implies that every alternating
  bit set has even cardinality.

  Then the number of different alternating bit sets of~$n$ is
  equal to~$d(n+1)$; see Finch~\cite[Section~2.16.3]{Finch:constants:2003}.
\item There is a relation to the well-known \emph{Towers of Hanoi};
  see Hinz, Klav\v{z}ar, Milutinovi\'{c}, Parisse and
  Petr~\cite{Hinz-Klavzar-Milutinovic-Parisse-Petr:metric-properties-hanoi-stern-diatomic}.
\end{enumerate}

Thus, the asymptotic analysis of the summatory function of Stern's
diatomic sequence is indeed well-motivated, also from a combinatorial
point of view.

\subsection{Regularity and a Linear Representation}

In order to be able to apply Theorem~\ref{thm:asymp} for the asymptotic
analysis of the summatory function, the sequence~$d$ has to be
recursive. Due to the definition of the sequence in~\eqref{eq:sb-def},
it is clear that~$d$ is $2$-recursive with exponents~$M=1$ and~$m=0$,
index shift bounds $\ell = 0$ and $u = 1$, and offset $n_0=0$. Thus, it is also $2$-regular by
Theorem~\ref{thm:remark-2.1-general}.
Note that Theorem~\ref{prop:recursive-special-case} is not applicable:
The term $d(n+1)$ appears in~\eqref{eq:sb-def:odd} and therefore, the
upper index shift bound~$u$ needs to be at least $1$, whereas Theorem~\ref{prop:recursive-special-case}
only allows $0$ as an upper index shift bound in the case $m=0$.
So we use
Theorem~\ref{thm:remark-2.1-general} to obtain a $2$-linear
representation~$(A_{0}, A_{1}, v)$ of~$d$: The
vector-valued sequence~$v$ is given by
\begin{equation*}
  v =
  \begin{pmatrix}
    d\\
    d\circ(n\mapsto n+1)\\
    d\circ(n\mapsto n+2)
  \end{pmatrix},
\end{equation*}
and the matrices are given by
\begin{equation*}
  A_{0} =
  \begin{pmatrix}
    1 & 0 & 0\\
    1 & 1 & 0\\
    0 & 1 & 0
  \end{pmatrix}
  \qq{and}
  A_{1} =
  \begin{pmatrix}
    1 & 1 & 0\\
    0 & 1 & 0\\
    0 & 1 & 1
  \end{pmatrix}.
\end{equation*}
The correctness of the recurrence relations $v(2n) = A_{0}v(n)$ and
$v(2n + 1) = A_{1}v(n)$ for all $n \geq 0$ can easily be verified by
using~\eqref{eq:sb-def}.

As in Example~\ref{ex:odd-pascal-after-main-result},
we can see that $d\circ(n\mapsto n+2)$ is actually not needed in the
linear representation, which is due to the fact that the coefficient of
$d(n + 1)$ in the recurrence relation~\eqref{eq:sb-def:even} is
zero. This implies that
$(A_{0}',A_{1}',v')$ with
\begin{equation*}
  v' =
  \begin{pmatrix}
    d\\
    d\circ(n\mapsto n+1)
  \end{pmatrix}
\end{equation*}
as well as
\begin{equation*}
  A_{0}' =
  \begin{pmatrix}
    1 & 0\\
    1 & 1 
  \end{pmatrix}
  \qq{and}
  A_{1}' =
  \begin{pmatrix}
    1 & 1\\
    0 & 1
  \end{pmatrix}
\end{equation*}
is also a $2$-linear representation of~$d$.

By applying the minimization algorithm mentioned in
Remark~\ref{rem:main-thm}~(\ref{item:minimization}), we see that this is the
smallest possible $2$-linear representation of~$d$.

\subsection{Asymptotics}

Let $\mathcal{V}(s)$ denote the Dirichlet series corresponding to~$v'$,
i.e.,
\begin{equation*}
  \mathcal{V}(s) = \sum_{n\geq 1}n^{-s}v'(n),
\end{equation*}
and let
$C = A_{0}' + A_{1}'$. In the following theorem,
we state the main result of this section: We give an asymptotic formula
for the summatory function of~$d$ as well as a functional equation
for~$\mathcal{V}(s)$.

\begin{theorem}[Asymptotics for Stern's Diatomic Sequence]
  \label{thm:asymp-sb}
  The summatory function~$D$ of Stern's diatomic sequence~$d$ satisfies
  \begin{equation}
    \label{eq:asymp-sb}
    D(N) = \smashoperator{\sum_{0\leq n <N}}d(n) = N^{\kappa}\cdot\Phi_{D}(\fractional{\log_{2}N}) + O(N^{\log_{2}\varphi})
  \end{equation}
  as $N \to \infty$, where
  $\kappa = \log_{2}3 = 1.5849625007211\ldots$,
  $\varphi = \frac{1+\sqrt{5}}{2} = 1.6180339887498\ldots$ is the golden ratio,
  $\log_{2}\varphi = 0.69424191363061\ldots$ and
  $\Phi_{D}$ is a $1$-periodic continuous function which is
    Hölder continuous with any exponent smaller
    than~$\kappa-\log_{2}\varphi$. The Fourier coefficients of~$\Phi_{D}$
    can be computed efficiently.

  Furthermore, the Dirichlet series~$\mathcal{V}(s)$ satisfies the
  functional equation
  \begin{equation}
    \label{eq:sb-functional}
    (I - 2^{-s}C)\mathcal{V}(s) = v'(1) + 2^{-s}A_{1}'\sum_{k\geq 1}\frac{1}{2^{k}}\binom{-s}{k}\mathcal{V}(s+k)
  \end{equation}
  for all $\Re s > \log_{2}\varphi$. Both sides of Equation~\eqref{eq:sb-functional}
  are analytic for $\Re s > \log_{2}\varphi$, and, in particular,
  $\mathcal{V}(s)$ is meromorphic for $\Re s>\log_2\varphi$ and can
  only have at most simple poles
  $s = \log_{2}3 + \frac{2i\pi\mu}{\log 2}$ with $\mu\in\Z$.
\end{theorem}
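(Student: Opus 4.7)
The plan is to invoke Theorem~\ref{thm:asymp} with the minimal $2$-linear representation $(A_0', A_1', v')$ of~$d$ exhibited above. What I need are two pieces of spectral data: the spectrum of $C = A_0' + A_1'$, which determines the main terms, together with the joint spectral radius of $\mathcal{A} = \set{A_0', A_1'}$ and the simple growth property, which determine the error.

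A direct computation gives
\[
  C = A_0' + A_1' = \begin{pmatrix} 2 & 1 \\ 1 & 2 \end{pmatrix},
\]
with characteristic polynomial $(\lambda - 3)(\lambda - 1)$. Hence $\sigma(C) = \set{1, 3}$, both eigenvalues are simple, and $m_C(3) = m_C(1) = 1$. Since $\log_2 3 = \kappa$, only the eigenvalue $3$ will contribute a main term.

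The main obstacle is to establish $\rho(\mathcal{A}) = \varphi$ together with the simple growth property, so that we may take $R = \varphi$ in Theorem~\ref{thm:asymp}. For the lower bound, the product $A_1' A_0' = \begin{pmatrix}2 & 1 \\ 1 & 1\end{pmatrix}$ has spectral radius $\varphi^2$, giving $\rho(\mathcal{A}) \geq \varphi$. For the matching upper bound I would exploit the iteration $v'(n) = A_{d_0}' \cdots A_{d_{L-1}}' v'(0)$, valid for $n \geq 1$ with binary expansion $d_{L-1}\ldots d_0$. Applying this identity with the starting vector $v'(1) = (1,1)^{\top}$ in place of $v'(0) = (0,1)^{\top}$ shows that for arbitrary digits $d_0, \ldots, d_{L-1} \in \set{0,1}$, setting $M \coloneqq A_{d_0}' \cdots A_{d_{L-1}}'$, the vector $M v'(1)$ equals $v'(n_0)$ for some $2^L \leq n_0 < 2^{L+1}$. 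The classical Fibonacci bound $\max_{2^L \leq n < 2^{L+1}} d(n) = F(L+1)$ then yields $M v'(1) = O(\varphi^L)$ component-wise. Because the entries of $M$ are non-negative and $v'(1) = e_1 + e_2$, each column of $M$ is component-wise dominated by $M v'(1)$, so every entry of $M$ is $O(\varphi^L)$. This delivers simultaneously $\rho(\mathcal{A}) \leq \varphi$ and the simple growth property.

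With $R = \varphi$ in hand, the unique eigenvalue of $C$ with $\abs{\lambda} > R$ is $\lambda = 3$, and no eigenvalue lies on the circle $\abs{\lambda} = R$. Theorem~\ref{thm:asymp} therefore delivers the asymptotic expansion~\eqref{eq:asymp-sb} with a single $1$-periodic continuous fluctuation $\Phi_D$, Hölder continuous of any exponent smaller than $\log_2(3/\varphi) = \kappa - \log_2 \varphi$; its Fourier coefficients may be extracted from the singular expansion~\eqref{eq:fourier-coeffs-singular-expansion}. Finally, the functional equation~\eqref{eq:sb-functional} is an immediate specialization of~\eqref{eq:dirichlet-equation}: since $(r/q)^k = 0$ for $r = 0$ and $k \geq 1$, only the $A_1'$-contribution survives in the inner sum, the single boundary term is $v'(1)$, and the meromorphy and pole-location statements then follow verbatim from Theorem~\ref{thm:asymp}.
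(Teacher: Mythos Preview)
Your proposal is correct and follows the same global plan as the paper—invoke Theorem~\ref{thm:asymp} after computing $\sigma(C)=\{1,3\}$ and establishing $\rho(\mathcal{A})=\varphi$ with the simple growth property—but your handling of the joint spectral radius is genuinely different.

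The paper exploits the symmetry $A_1'=(A_0')^{\top}$: this forces both matrices to have the same spectral norm, equal to the square root of the top eigenvalue of $A_0'A_1'$, namely~$\varphi$. Since the spectral norm of a single factor already bounds the joint spectral radius from above, and the product $A_0'A_1'$ supplies the matching lower bound, one gets $\rho(\mathcal{A})=\varphi$ together with the finiteness property at $k=1$ (hence simple growth) from pure linear algebra, entirely self-contained. Your route instead imports the classical Fibonacci bound on Stern's sequence over dyadic blocks; combined with non-negativity of the entries and $v'(1)=(1,1)^{\top}$, this controls every entry of every product $A_{d_0}'\cdots A_{d_{L-1}}'$. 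Both arguments are valid. The paper's is cleaner and needs no external input for this particular pair; yours, while leaning on an outside estimate, offers a template that applies to any non-negative linear representation once a pointwise growth bound on the underlying sequence is known.
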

Table~\ref{tab:sb-fourier} shows the first few Fourier coefficients and Figure~\ref{fig:sb-fluc} a plot of the periodic fluctuation of Theorem~\ref{thm:asymp-sb}.

\begin{table}[htbp]
  \centering
  \begin{tabular}{r|l}
    \multicolumn{1}{c|}{$\mu$} & \multicolumn{1}{c}{$\varphi_{\mu}$}\\\hline
    $0$ & $\phantom{-}0.5129922721107177789989881697483$\\
    $1$ & $-0.00572340619479957230984532582323 + 0.00692635056470554871320794780023i$\\
    $2$ & $\phantom{-}0.00024322678681282580951796870908 + 0.00296266191012688412725699259509i$\\
    $3$ & $-0.00145239145783579607592238228126 + 0.00117965322085442917547658711471i$\\
    $4$ & $\phantom{-}0.00111195666191700704424207971541 + 0.00018518355971470343780812186861i$\\
    $5$ & $-0.00046732929957426516792963051204 + 0.00050425058689999021735711128987i$\\
    $6$ & $-0.00044953390461558932213468137492 + 0.00048773649732968174101103217106i$\\
    $7$ & $\phantom{-}0.00036329328164895877338262637843 + 0.00035534416834062145852032394307i$\\
    $8$ & $-0.00016679033186561839463311958967 - 0.00043694014091729453542478927729i$\\
    $9$ & $\phantom{-}0.00030367683575578278808761185183 + 0.00009371153567156005005069054904i$\\
    $10$ & $-0.00009911479960205956796299031716 + 0.00019462735102739460438023334462i$\\
  \end{tabular}
  \caption{First few Fourier Coefficients of~$\Phi_{D}$}
  \label{tab:sb-fourier}
\end{table}

\begin{figure}[htbp]
  \centering
  \includegraphics{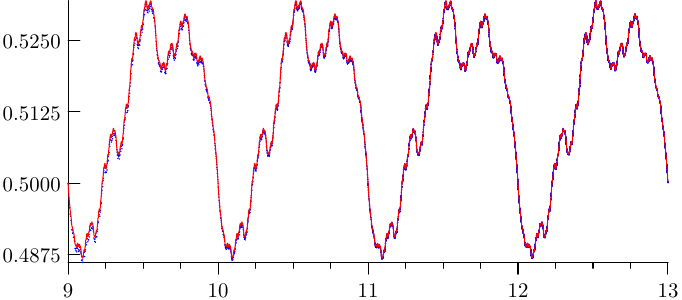}
  \caption{Fluctuation in the main term of the asymptotic expansion of
    the summatory function~$D$. The plot shows the periodic
    fluctuation~$\Phi_{D}(u)$ approximated by its Fourier series of
    degree~$2000$ (red) as well as the
    function~$D(2^{u})/2^{\kappa u}$ (blue).  }
  \label{fig:sb-fluc}
\end{figure}

\begin{proof}[Proof of Theorem~\ref{thm:asymp-sb}]
  We use Theorem~\ref{thm:asymp} with the linear
  representation~$(A_{0}', A_{1}',
  v')$ and work out the parameters needed in the
  theorem. Recall that $C = A_{0}' + A_{1}'$.

  \emph{Joint Spectral Radius.} We determine the joint spectral radius
  of $A_{0}'$ and $A_{1}'$.
  As one matrix is the transpose of the other, the spectral norm
  of each of them equals the square root of the dominant eigenvalue of their product. The maximal
  spectral norm of the matrices is an upper bound for the joint spectral
  radius; the square root of the dominant eigenvalue of their product is a
  lower bound for the joint spectral radius. As both bounds are equal,
  the joint spectral radius equals the spectral norm. It turns out that this
  spectral norm equals~$\varphi = \frac{1 + \sqrt{5}}{2}$.

  \emph{Finiteness Property.} The finiteness
  property for $A_{0}'$ and $A_{1}'$ is
  satisfied with respect to the spectral norm, which can be seen by considering exactly one
  factor~$A_{0}'$ or~$A_{1}'$. Thus, we choose
  $R = \varphi$.
  
  \emph{Eigenvalues.} The spectrum of~$C$ is given by
  $\sigma(C) = \set{1,3}$. Furthermore, it is clear that all
  eigenvalues are simple and thus, $m_{C}(3) = 1$.

  Applying Theorem~\ref{thm:asymp} yields the result.
\end{proof}

It will turn out during the proof of Theorem~\ref{thm:asymp-pascal-gen} that
a slight modification of the summatory function leads to an exact formula:
\begin{corollary}\label{corollary:Stern-Brocot-exact}
  With the notations of Theorem~\ref{thm:asymp-sb}, we have
\begin{equation*}
   \smashoperator{\sum_{0\leq n <N}}d(n) + \frac12 d(N) =
  N^{\kappa}\cdot\Phi_{D}(\fractional{\log_{2}N}).
\end{equation*}
\end{corollary}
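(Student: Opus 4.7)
The plan is to introduce the modified partial sum $T(N) \coloneqq \sum_{0 \le n < N} d(n) + \tfrac{1}{2}d(N)$ and to show that it satisfies the clean self-similarity $T(2M) = 3\,T(M)$ for all $M \ge 1$. Combined with Theorem~\ref{thm:asymp-sb}, the gap between the main-term exponent $\kappa = \log_{2}3$ and the error exponent $\log_{2}\varphi$ will then force the error to vanish identically, upgrading the asymptotic expansion to an exact identity.

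Splitting $D(2M) = \sum_{0\le n<2M} d(n)$ according to the parity of the summation index and using $d(2k) = d(k)$, $d(2k+1) = d(k) + d(k+1)$ and $d(0) = 0$, a direct computation gives $D(2M) = 3\,D(M) + d(M)$. Adding $\tfrac{1}{2}d(2M) = \tfrac{1}{2}d(M)$ to both sides and using $T(M) = D(M) + \tfrac{1}{2}d(M)$ absorbs the residual $d(M)$-term into the recurrence and yields $T(2M) = 3\,T(M)$. The value $\tfrac{1}{2}$ is forced: any constant $c$ for which $D(N) + c\,d(N)$ satisfies a homogeneous recurrence must fulfill $1 + c = 3c$.

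Now set $E(N) \coloneqq T(N) - N^{\kappa}\Phi_{D}(\fractional{\log_{2}N})$. Theorem~\ref{thm:asymp-sb} yields $D(N) - N^{\kappa}\Phi_{D}(\fractional{\log_{2}N}) = O(N^{\log_{2}\varphi})$, and iterating $v'(n) = A_{d_{0}}'\cdots A_{d_{L-1}}' v'(0)$ together with the simple growth property of $\set{A_{0}', A_{1}'}$ at joint spectral radius $\varphi$ (both established in the proof of Theorem~\ref{thm:asymp-sb}) gives $d(N) = O(N^{\log_{2}\varphi})$. Hence $E(N) = O(N^{\log_{2}\varphi})$. Since $\fractional{\log_{2}(2M)} = \fractional{\log_{2}M}$ and $(2M)^{\kappa} = 3\,M^{\kappa}$, the recurrence from Step~1 transfers to $E(2M) = 3\,E(M)$, so by induction $E(2^{a}M) = 3^{a}E(M)$ for all $a \ge 0$ and $M \ge 1$. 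Evaluating the $O$-bound at $N = 2^{a}M$ with $M$ fixed gives $3^{a}\abs{E(M)} = O(\varphi^{a})$ with an implied constant independent of~$a$; dividing by $3^{a}$ and letting $a \to \infty$, the inequality $3 > \varphi$ forces $E(M) = 0$ for every $M \ge 1$.

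No serious obstacle is anticipated. The entire argument rests on the algebraic observation that the specific correction $\tfrac{1}{2}d(N)$ turns the inhomogeneous recurrence $D(2M) = 3D(M) + d(M)$ into a homogeneous one; thereafter the spectral gap $\log_{2}3 > \log_{2}\varphi$ between leading term and error term leaves no room for any residual fluctuation, so the asymptotic identity is in fact exact.
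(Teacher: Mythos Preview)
Your proof is correct, and it takes a genuinely different route from the paper.

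The paper proves the corollary as a byproduct of the analysis of the generalized Pascal's triangle sequence~$z$ in Section~\ref{sec:pascal}: there it is shown, via \cite[Theorem~B]{Heuberger-Krenn:2018:asy-regular-sequences}, that $Z(N)=\sum_{n<N}z(n)$ satisfies the \emph{exact} identity $Z(N)=N^{\kappa}\Phi_{Z}(\fractional{\log_{2}N})$ with $\Phi_{Z}=2\Phi_{D}$, the vanishing of the error term coming from the fact that the choice vector~$e_{1}$ happens to be a left eigenvector of $C=A_{0}+A_{1}$ for the eigenvalue~$3$. The relation $z(n)=d(2n+1)=d(n)+d(n+1)$ then gives $Z(N)=2D(N)+d(N)$, and dividing by~$2$ yields the corollary.

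Your argument bypasses the detour through~$z$ and the external Theorem~B entirely. You observe directly that the correction $\tfrac{1}{2}d(N)$ homogenizes the recurrence $D(2M)=3D(M)+d(M)$ to $T(2M)=3T(M)$, and then use the spectral gap $\kappa=\log_{2}3>\log_{2}\varphi$ together with the error bound of Theorem~\ref{thm:asymp-sb} to force the residual $E(M)$ to vanish. This is more elementary and self-contained: it needs only Theorem~\ref{thm:asymp-sb} and the bound $d(N)=O(N^{\log_{2}\varphi})$, both already available within Section~\ref{sec:stern-brocot}. The paper's approach, on the other hand, illustrates a structural reason (the eigenvector alignment) that generalizes to other sequences via Theorem~B, and it simultaneously establishes the link $\Phi_{Z}=2\Phi_{D}$ between the two fluctuations.
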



\section{Number of Non-Zero Entries in a Generalized Pascal's Triangle}
\label{sec:pascal}

\subsection{Introduction of the Sequence}

The first two authors of this article have studied Pascal's rhombus as one possible
generalization of Pascal's triangle
in~\cite{Heuberger-Krenn:2018:asy-regular-sequences} as well as
in~\cite{Heuberger-Krenn-Prodinger:2018:pascal-rhombus} together with Prodinger. In particular,
they analyzed the asymptotic behavior of the number of odd entries in
the $n$th row of Pascal's rhombus.

Here, we consider a generalization of Pascal's triangle to binomial
coefficients of words. This generalization was first introduced by
Leroy, Rigo and Stipulanti
in~\cite{Leroy-Rigo-Stipulanti:2016:generalized-pascal-triangle}. We
in particular study the sequence counting the number of non-zero
elements in each row (see~\oeis{A007306} except for the initial value), which was investigated in detail by the same
authors
in~\cite{Leroy-Rigo-Stipulanti:2017:non-zero-generalized-pascal-triangle} and~\cite{Leroy-Rigo-Stipulanti:2018:counting-subword-occurences},
and provide an asymptotic result for the summatory function. Our result
coincides with the result
in~\cite{Leroy-Rigo-Stipulanti:2018:counting-subword-occurences}. In
contrast
to~\cite{Leroy-Rigo-Stipulanti:2018:counting-subword-occurences}, we
additionally provide the periodic fluctuation that occurs in the
asymptotics by determining its Fourier coefficients. This completes the full picture
of the summatory function.

We start with the following definition; also
see Lothaire~\cite[Chapter~6]{Lothaire:1997:comb-words} for more details on
binomial coefficients of words.

\begin{definition}[Scattered Subword, Binomial Coefficients of Words]
  Let $u = u_{1}\ldots u_{j}$ and $v = v_{1}\dots v_{k}$ be two words
  over the same alphabet.
  \begin{enumerate}[(a)]
  \item We say that $v$ is a \emph{scattered subword} of $u$ if there
    exists a strictly increasing mapping
    $\pi\colon\set{1,\dots,k}\to\set{1,\dots,j}$ with
    $u_{\pi(i)} = v_{i}$ for all~$1\leq i\leq k$.
    We call~$\pi$ an \emph{occurrence} of~$v$ as a scattered subword of~$u$.
  \item The \emph{binomial coefficient} of $u$ and $v$, denoted
    by~$\binom{u}{v}$, is defined as the number of different
    occurrences of~$v$ as a scattered subword of~$u$.
  \end{enumerate}
\end{definition}

For example, we consider the words
$u = u_{1}u_{2}u_{3}u_{4}u_{5}u_{6} = 110010$ and $v = v_{1}v_{2} = 10$
over the alphabet~$\set{0,1}$. Then we have
\begin{equation}
  \label{eq:example-binom-words}
  \binom{110010}{10} = 7
\end{equation}
because there are exactly seven possibilities to represent~$v$ as a
scattered subword of~$u$, namely
\begin{equation*}
  u_{1}u_{3} = u_{1}u_{4} = u_{1}u_{6} = u_{2}u_{3} = u_{2}u_{4} = u_{2}u_{6} = u_{5}u_{6} = v.
\end{equation*}

Note that the classical binomial coefficient for two
integers~$n$, $k\in\N_{0}$ can be obtained by the identity
\begin{equation*}
  \binom{1^{n}}{1^{k}} = \binom{n}{k},
\end{equation*}
where~$1^{n}$ denotes the word consisting of~$n$ ones.

Next, we define the \emph{generalized Pascal's triangle}~$\mathcal{P}_{2}$
as an infinite matrix as follows: The
entry in the $n$th row and $k$th column of~$\mathcal{P}_{2}$ is
given by $\binom{(n)_{2}}{(k)_{2}}$, where $(n)_{2}$ denotes
the binary expansion of some~$n\in\N_{0}$, i.e.,
\begin{equation*}
  \mathcal{P}_{2} \coloneqq
    \Biggl(\binom{(n)_{2}}{(k)_{2}}\Biggr)_{\substack{n\geq 0\\k\geq 0}}.
\end{equation*}

Observe that
$\binom{(n)_{2}}{(0)_{2}} = 1$ and $\binom{(n)_{2}}{(n)_{2}} = 1$
hold for all $n \geq 0$.  We let~$z$ denote the sequence of
interest and define~$z(n)$ as the number of non-zero elements
in the $n$th row of~$\mathcal{P}_{2}$. The first few values
of~$\mathcal{P}_{2}$ are given in Table~\ref{tab:pascal-gen}, and the
last column shows the first few values of~$z$. Figure~\ref{fig:gen-pascal-triangle-plot} illustrates the non-zero elements in~$\mathcal{P}_{2}$.

\begin{table}[htbp]
  \centering
  \begin{tabular}{c|c|ccccccccc||c}
    & $k$ & $0$ & $1$ & $2$ & $3$ & $4$ & $5$ & $6$ & $7$ & $8$\\\hline
    $n$ & \diagbox{$(n)_{2}$}{$(k)_{2}$} & $\varepsilon$ & $1$ & $10$ & $11$ & $100$ & $101$ & $110$ & $111$ & $1000$ & $z(n)$\\\hline
    $0$ & $\varepsilon$ & \textbf{1} & 0 & 0 & 0 & 0 & 0 & 0 & 0 & 0 & 1\\
    $1$ & $1$ & \textbf{1} & \textbf{1} & 0 & 0 & 0 & 0 & 0 & 0 & 0 & 2\\
    $2$ & $10$ & 1 & 1 & 1 & 0 & 0 & 0 & 0 & 0 & 0 & 3\\
    $3$ & $11$ & \textbf{1} & \textbf{2} & 0 & \textbf{1} & 0 & 0 & 0 & 0 & 0 & 3\\
    $4$ & $100$ & 1 & 1 & 2 & 0 & 1 & 0 & 0 & 0 & 0 & 4\\
    $5$ & $101$ & 1 & 2 & 1 & 1 & 0 & 1 & 0 & 0 & 0 & 5\\
    $6$ & $110$ & 1 & 2 & 2 & 1 & 0 & 0 & 1 & 0 & 0 & 5\\
    $7$ & $111$ & \textbf{1} & \textbf{3} & 0 & \textbf{3} & 0 & 0 & 0 & \textbf{1} & 0 & 4\\
    $8$ & $1000$ & 1 & 1 & 3 & 0 & 3 & 0 & 0 & 0 & 1 & 5\\
  \end{tabular}
  \caption{The first few elements of the generalized Pascal's
    triangle~$\mathcal{P}_2$ as well as the corresponding number of
    non-zero elements in each row. The values of the ordinary Pascal's
    triangle are printed in bold.}
  \label{tab:pascal-gen}
\end{table}

\begin{figure}[htbp]
  \centering
  \includegraphics[width=0.6\textwidth]{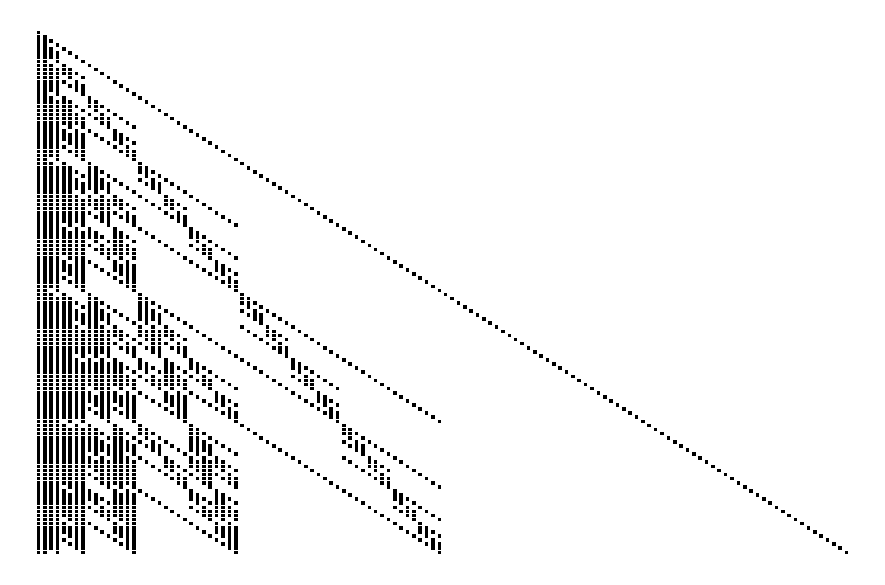}
  \caption{Non-zero elements in the generalized Pascal's triangle~$\mathcal{P}_2$}
  \label{fig:gen-pascal-triangle-plot}
\end{figure}

The following result by Leroy, Rigo and Stipulanti~\cite{Leroy-Rigo-Stipulanti:2017:non-zero-generalized-pascal-triangle} provides a (at least on the first glance) surprising connection between the number of
non-zero elements in~$\mathcal{P}_{2}$ and Stern's diatomic sequence.

\begin{theorem}[Leroy--Rigo--Stipulanti~{\cite[Section~4]{Leroy-Rigo-Stipulanti:2017:non-zero-generalized-pascal-triangle}}]
  \label{thm:connection-pascal-stern}
  The sequence $z$ satisfies the relation
  \begin{equation*}
    z(n) = d(2n + 1)
  \end{equation*}
  for all $n \geq 0$, where $d$ is Stern's diatomic sequence as
  studied in Section~\ref{sec:stern-brocot}.
\end{theorem}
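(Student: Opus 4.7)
The plan is to prove $z(n) = d(n) + d(n+1)$ by strong induction on~$n$, which is equivalent to the theorem in view of the Stern recurrence $d(2n+1) = d(n) + d(n+1)$. The base cases $z(0) = 1 = d(0) + d(1)$ and $z(1) = 2 = d(1) + d(2)$ are immediate from the definitions. For the inductive step, I plan to establish the two auxiliary recurrences
\begin{equation*}
  z(2m) = z(m) + d(m), \qquad z(2m+1) = z(m) + d(m+1), \qquad m \geq 1.
\end{equation*}

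Granting these recurrences together with the induction hypothesis $z(m) = d(m) + d(m+1)$ and the Stern recurrences, the inductive step closes in one line:
\begin{equation*}
  z(2m) = z(m) + d(m) = 2 d(m) + d(m+1) = d(2m) + d(2m+1),
\end{equation*}
and symmetrically $z(2m+1) = d(m) + 2 d(m+1) = d(2m+1) + d(2m+2)$, which are exactly $d(2\cdot 2m + 1)$ and $d(2(2m+1) + 1)$.

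The main obstacle lies in proving the two auxiliary recurrences, which are genuine combinatorial statements about how the number of distinct valid scattered subwords (those equal to $\varepsilon$ or starting with~$1$) of $(m)_{2}$ changes when a binary digit is appended. Writing $u = (m)_{2}$ and $S(u)$ for the set of distinct scattered subwords of~$u$, inclusion--exclusion on the decompositions $S(u0) = S(u) \cup \set{w0 : w \in S(u)}$ and $S(u1) = S(u) \cup \set{w1 : w \in S(u)}$ reveals that both recurrences reduce to the single combinatorial identity
\begin{equation*}
  N\bigl((m)_{2}\bigr) \coloneqq \abs{\set{v \in S((m)_{2}) : v \text{ starts and ends with } 1}} = d(m), \qquad m \geq 1.
\end{equation*}
My proposed approach to this identity is a bijection between the scattered subwords of $(m)_{2}$ starting and ending with~$1$ and one of the combinatorial objects enumerated by $d(m)$ recalled in Section~\ref{sec:stern-brocot}. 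A natural target is the hyperbinary representations of $m-1$: to a scattered subword $v$ one would associate a hyperbinary representation by reading off the ``gaps'' between the positions occupied by the canonical (say, leftmost) occurrence of $v$ in $(m)_{2}$ and encoding each gap's length parity as the digit $0$, $1$, or $2$ of the representation.

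As a second, more structural route consistent with the spirit of this paper, one could bypass the combinatorial identity by showing directly that~$z$ is $2$-recursive (hence $2$-regular by Theorem~\ref{thm:remark-2.1-general}) via self-referential recurrences for $z(2m)$ and $z(2m+1)$ in terms of~$z(m)$ and suitable auxiliary sequences, and then comparing the resulting $2$-linear representation with the one for $n \mapsto d(2n+1)$ derived from the $2$-linear representation of~$d$ in Section~\ref{sec:stern-brocot}; equality of the two sequences would then follow from agreement on finitely many initial values.
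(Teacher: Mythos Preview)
The paper does not prove this theorem; it is quoted from Leroy, Rigo and Stipulanti and used as a black box. So there is no ``paper's own proof'' to compare with, and your proposal must stand on its own.

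There is a genuine gap in the reduction step. Carrying out the inclusion--exclusion you sketch on $S(u0)$ and $S(u1)$ (with $u=(m)_2$) gives
\[
  z(2m) = z(m) + N\bigl((m)_2\bigr),\qquad
  z(2m+1) = 2z(m) - N\bigl((m)_2\bigr),
\]
where $N\bigl((m)_2\bigr)$ is your count of scattered subwords of $(m)_2$ starting and ending with~$1$. The first of these indeed yields your auxiliary recurrence $z(2m)=z(m)+d(m)$ once $N=d$ is known. But the second does \emph{not} reduce to $N=d$ alone: to get $z(2m+1)=z(m)+d(m+1)$ from $z(2m+1)=2z(m)-d(m)$ you must use $z(m)=d(m)+d(m+1)$, which is precisely the induction hypothesis. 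So your claim that ``both recurrences reduce to the single combinatorial identity'' is false as stated.

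More seriously, the identity $N\bigl((m)_2\bigr)=d(m)$ cannot easily be proved in advance of the theorem. Appending a~$0$ gives $N\bigl((2m')_2\bigr)=N\bigl((m')_2\bigr)$, matching $d(2m')=d(m')$; but appending a~$1$ gives (by the same inclusion--exclusion) $N\bigl((2m'+1)_2\bigr)=z(m')$, and to match $d(2m'+1)$ you again need $z(m')=d(2m'+1)$. Thus an independent proof of $N=d$ is essentially equivalent to the theorem itself, and your vaguely described bijection to hyperbinary representations (``reading off gaps \ldots\ encoding each gap's length parity'') is not nearly concrete enough to break this circularity. The clean repair is to run a \emph{simultaneous} strong induction on the two statements $z(m)=d(m)+d(m+1)$ and $N\bigl((m)_2\bigr)=d(m)$; the four implications above then close both at once. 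Your second route via comparing $2$-linear representations is logically sound and closer to the spirit of the paper, but note that the recurrences for~$z$ you would need (those of Theorem~\ref{thm:pascal-recursions}) are themselves nontrivial combinatorial facts quoted from the same source.
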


\subsection{Regularity and a Linear Representation}
\label{sec:pascal-regular}

In principle, Theorem~\ref{thm:connection-pascal-stern} and the results on the
asymptotics of Stern's diatomic sequence given in
Section~\ref{sec:stern-brocot} could be used to determine the asymptotics
of~$z$. However, it turns out that the error term in the asymptotic expansion
of~$z$ vanishes. In order to show this, the results of
Section~\ref{sec:stern-brocot} are not sufficient, and we need to have a closer
look at the linear representation of~$z$. 
Theorem~\ref{thm:connection-pascal-stern} does not suffice for this purpose, so instead
we intend to present three different possibilities for
obtaining a linear representation. The first one will give some more details on
the reduction via Theorem~\ref{thm:connection-pascal-stern}, while the others
will be based on the following result, also by Leroy, Rigo and Stipulanti.

\begin{theorem}[Leroy--Rigo--Stipulanti~{\cite[Theorem~21]{Leroy-Rigo-Stipulanti:2017:non-zero-generalized-pascal-triangle}}]
  \label{thm:pascal-recursions}
  The sequence~$z$ satisfies the recurrence relations
  \begin{subequations}
    \label{eq:pascal-rec}
   \begin{align}
    \label{eq:pascal-rec-1}
    z(2n + 1) &= 3z(n) - z(2n),\\
    \label{eq:pascal-rec-2}
    z(4n) &=  - z(n) + 2z(2n),\\
    \label{eq:pascal-rec-3}
    z(4n + 2) &= 4z(n) - z(2n)
  \end{align}
  \end{subequations}
  for all $n \geq 0$.
\end{theorem}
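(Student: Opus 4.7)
The plan is to reduce everything to Stern's diatomic sequence via Theorem~\ref{thm:connection-pascal-stern}, which states $z(n)=d(2n+1)$, and then verify each of the three identities by a short calculation using only the defining recurrences~\eqref{eq:sb-def:even} and~\eqref{eq:sb-def:odd}. Once $z$ is expressed through $d$, each of the three claimed relations becomes an identity of the form $d(2^{a}n+b) = \text{linear combination of } d$-values at smaller arguments, which one unfolds mechanically by repeatedly applying the rules $d(2k)=d(k)$ and $d(2k+1)=d(k)+d(k+1)$.

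More concretely, I would proceed as follows. For the first relation, rewrite $z(2n+1)=d(4n+3)$, expand via~\eqref{eq:sb-def:odd} as $d(2n+1)+d(2n+2)$, and simplify to $d(2n+1)+d(n+1)$; then expand the right-hand side $3z(n)-z(2n)=3d(2n+1)-d(4n+1)$, use $d(4n+1)=d(2n)+d(2n+1)$ to reduce it to $2d(2n+1)-d(n)$, and observe that equality is exactly the Stern recurrence $d(2n+1)=d(n)+d(n+1)$. For the second relation, expand $z(4n)=d(8n+1)=d(4n)+d(4n+1)=2d(2n)+d(2n+1)$, and separately $-z(n)+2z(2n)=-d(2n+1)+2d(4n+1)=2d(2n)+d(2n+1)$, giving equality at once. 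For the third relation, expand $z(4n+2)=d(8n+5)=d(4n+2)+d(4n+3)=2d(2n+1)+d(n+1)$, and $4z(n)-z(2n)=4d(2n+1)-d(4n+1)=3d(2n+1)-d(2n)$; equality again reduces to $d(2n+1)=d(n)+d(n+1)$.

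There is really no substantive obstacle: the whole argument is a routine unfolding of the Stern recurrence, and the only thing one must be careful about is index bookkeeping (tracking the parity of the argument at each step so that the correct case of~\eqref{eq:sb-def} applies). If one preferred to avoid invoking Theorem~\ref{thm:connection-pascal-stern}, an alternative route would be a direct combinatorial argument on scattered-subword occurrences in binary expansions, splitting the set of positions according to whether the trailing digit is matched or skipped; this would be the harder path, since one needs to track which pairs of words $((n)_2,(k)_2)$ contribute non-zero binomial coefficients, and the bijective accounting between the two sides of each recurrence becomes noticeably more delicate than the three-line Stern calculations outlined above.
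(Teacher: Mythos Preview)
Your computations are correct: each of the three identities unfolds exactly as you describe and reduces to the single relation $d(2n+1)=d(n)+d(n+1)$, valid for all $n\ge 0$.

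Note, however, that the paper does not prove this theorem at all; it is quoted verbatim from Leroy--Rigo--Stipulanti. So there is no ``paper's own proof'' to compare against. Your route---deriving Theorem~\ref{thm:pascal-recursions} from Theorem~\ref{thm:connection-pascal-stern}---is logically sound, but be aware that both results are imported from the same source, and in that source the dependence may run the other way (the recurrences~\eqref{eq:pascal-rec} established directly by a combinatorial argument on scattered subwords, and the identification $z(n)=d(2n+1)$ deduced from them afterwards). If that is the case, your argument, while a perfectly valid verification once $z(n)=d(2n+1)$ is granted, would be circular as an independent proof. Since the present paper treats both as black boxes this is not an issue here, but it is worth flagging.
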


As already mentioned, the previous theorem as well as
Theorem~\ref{thm:connection-pascal-stern} provide several possibilities to find
a linear representation of~$z$, and we discuss three of
them. As a side product of the second approach, it will also be clear why~$z$
is a recursive sequence and therefore fits into our framework.

\paragraph{Approach 1.}

  First of all, it is worth mentioning that we can use
  Theorem~\ref{thm:connection-pascal-stern} to obtain a $2$-linear representation:
  Since Stern's diatomic sequence~$d$ is $2$-regular and $z(n) = d(2n + 1)$
  holds for all $n\in\N_{0}$, the $2$-regularity of~$z$ follows
  by Allouche and Shallit~\cite[Theorem~2.6]{Allouche-Shallit:1992:regular-sequences}.  In the proof
  of~\cite[Theorem~2.6]{Allouche-Shallit:1992:regular-sequences} we also find a
  description for the construction of a $2$-linear representation of~$z$ by using
  the linear representation of~$d$. We do not want to go into detail here.

\paragraph{Approach 2.}

  The recurrence relations in Theorem~\ref{thm:pascal-recursions} are not
  directly in line with the desired relations in the framework of $q$-recursive
  sequences as given in~\eqref{eq:def-q-recursive}. This second
  approach does not only lead to a desired linear representation of~$z$, but it
  also illustrates how recurrence relations as given
  in~\eqref{eq:pascal-rec} can be disentangled in order to obtain appropriate
  recurrence relations for a $q$-recursive sequence. In fact, we will show that
  the sequence~$z$ is $2$-recursive, which directly implies that it is also
  $2$-regular due to Theorem~\ref{prop:recursive-special-case}.

  For this purpose, consider the system of equations
  \begin{equation}\label{eq:pascal-matrix-second-approach-1}
    \begin{pmatrix}
      -3 & 1 & 1 & 0 & 0 & 0 & 0 \\
      1 & -2 & 0 & 1 & 0 & 0 & 0 \\
      -4 & 1 & 0 & 0 & 0 & 1 & 0 \\
      0 & -3 & 0 & 1 & 1 & 0 & 0 \\
      0 & 0 & -3 & 0 & 0 & 1 & 1
    \end{pmatrix}
    \begin{pmatrix}
      z\\
      z\circ (n\mapsto 2n)\\
      z\circ (n\mapsto 2n + 1)\\
      z\circ (n\mapsto 4n)\\
      z\circ (n\mapsto 4n + 1)\\
      z\circ (n\mapsto 4n + 2)\\
      z\circ (n\mapsto 4n + 3)\\
    \end{pmatrix}
    = 0,
  \end{equation}
  where the first three
  rows correspond to the relations given in
  Theorem~\ref{thm:pascal-recursions} and the last two rows arise from~\eqref{eq:pascal-rec-1}
  by replacing~$n$ by $2n$ and by $2n+1$, respectively.

  We want to get a representation of~$z$ as a $2$-recursive sequence.
  It turns out that we can achieve such a sequence with exponents
  $M = 2$ and $m = 1$, so we need explicit expressions for
  $z\circ (n\mapsto 4n)$, $z\circ (n\mapsto 4n+1)$, $z\circ (n\mapsto 4n+2)$
  and $z\circ(n\mapsto 4n+3)$ (corresponding to the last four columns of the matrix). We also
  want these expressions to be free from~$z$ itself (corresponding to the first
  column of the matrix), so we transform the system in such a way that an identity matrix appears in these columns. Indeed, we
  multiply the system from the left with the inverse of the matrix formed by these five columns
  and obtain
  \begingroup
  \renewcommand{\arraystretch}{1.15}
  \begin{equation}\label{eq:pascal-matrix-second-approach-2}
    \begin{pmatrix}
      0 & \tikzmark{l1}-\frac{5}{3} & \frac{1}{3} & 1 & 0 & 0 & 0 \\
      0 & -\frac{4}{3} & -\frac{1}{3}\tikzmark{r1} & 0 & 1 & 0 & 0 \\
      0 & \tikzmark{l2}-\frac{1}{3} & -\frac{4}{3} & 0 & 0 & 1 & 0 \\
      0 & \frac{1}{3} & -\frac{5}{3}\tikzmark{r2} & 0 & 0 & 0 & 1\\
      1 & -\frac{1}{3} & -\frac{1}{3} & 0 & 0 & 0 & 0
    \end{pmatrix}
    \begin{pmatrix}
      z\\
      z\circ (n\mapsto 2n)\\
      z\circ (n\mapsto 2n + 1)\\
      z\circ (n\mapsto 4n)\\
      z\circ (n\mapsto 4n + 1)\\
      z\circ (n\mapsto 4n + 2)\\
      z\circ (n\mapsto 4n + 3)\\
    \end{pmatrix}
    = 0.
    \DrawBox[ForestGreen, thick][0.13em][0][-0.2em][0.2em]{l1}{r1}
    \DrawBox[Maroon, thick][0.13em][0][-0.2em][0.2em]{l2}{r2}
  \end{equation}
  \endgroup
  Here the first four rows give the system
  \begin{align*}
    z(4n) &= \frac{5}{3}z(2n) - \frac{1}{3}z(2n + 1),\\
    z(4n + 1) &= \frac{4}{3}z(2n) + \frac{1}{3}z(2n + 1),\\
    z(4n + 2) &= \frac{1}{3}z(2n) + \frac{4}{3}z(2n + 1),\\
    z(4n + 3) &= -\frac{1}{3}z(2n) + \frac{5}{3}z(2n + 1)
  \end{align*}
  for $n \geq 0$, which is a representation of~$z$ as a $2$-recursive sequence
  with offset~$n_{0} = 0$, exponents
  $M = 2$ and $m = 1$ and index shift bounds $\ell=0$ and $u=1$. The last row of~\eqref{eq:pascal-matrix-second-approach-2}
  can be omitted.

  The matrices~$B_{0}$ and~$B_{1}$ as introduced
  in~\eqref{eq:prop-recursive-Bs} are given by
  \begin{equation*}
    B_{0} = \frac{1}{3}
    \begin{pmatrix}
      \tikzmark{l1}5 & -1\\
      4 & 1\tikzmark{r1}
    \end{pmatrix}
    \qq{as well as}
    B_{1} = \frac{1}{3}
    \begin{pmatrix}
      \tikzmark{l2}1 & 4\\
      -1 & 5\tikzmark{r2}
    \end{pmatrix};
    \DrawBox[ForestGreen, thick][0][0.37em][0][0]{l1}{r1}
    \DrawBox[Maroon, thick][0][0][0][-0.4em]{l2}{r2}
  \end{equation*}
  see also the marked submatrices
  in~\eqref{eq:pascal-matrix-second-approach-2}. We can now apply
  Theorem~\ref{prop:recursive-special-case} and obtain a $2$-linear
  representation~$(A_{0}, A_{1}, v)$ of~$z$ with dimension~$3$: The
  vector-valued sequence~$v$ is given by
  \begin{equation*}
    v =
    \begin{pmatrix}
      z\\
      z\circ(n\mapsto 2n)\\
      z\circ(n\mapsto 2n + 1)
    \end{pmatrix},
  \end{equation*}
  and due to~\eqref{eq:prop-recursive-matrices}, the 
  matrices~$A_{0}$ and $A_{1}$ have the form
  \begin{equation*}
    A_{0} = \frac{1}{3}
    \begin{pmatrix}
      0 & 3 & 0\\
      0 & \tikzmark{l1}5 & -1\\
      0 & 4 & 1\tikzmark{r1}
    \end{pmatrix}
    \qq{and}
    A_{1} = \frac{1}{3}
    \begin{pmatrix}
      0 & 0 & 3\\
      0 & \tikzmark{l2}1 & 4\\
      0 & -1 & 5\tikzmark{r2}
    \end{pmatrix}.
    \DrawBox[ForestGreen, thick][0][0.37em][0][0]{l1}{r1}
    \DrawBox[Maroon, thick][0][0][0][-0.4em]{l2}{r2}
  \end{equation*}
  Note that by~\eqref{eq:pascal-rec-1} we know that the three sequences
  contained in $v$ are linearly dependent. This means that we could replace
  $z\circ(n\mapsto 2n + 1)$ by a linear combination of the other two sequences
  and obtain a linear representation of dimension~2.  However, we prefer to
  derive a linear representation with this vector directly as Approach~3 below.

\paragraph{Approach 3.}
  
  Finally, we can also derive a linear representation of~$z$
  directly from the recurrence relations given in
  Theorem~\ref{thm:pascal-recursions}. By setting
  \begin{equation}
    \label{eq:pascal-gen-v}
    v =
    \begin{pmatrix}
      z\\
      z\circ(n\mapsto 2n)
    \end{pmatrix},
  \end{equation}
  we obtain
  \begin{subequations}
    \label{eq:pascal-A0-A1}
    \begin{equation}
      \label{eq:pascal-A0}
    v(2n) =
    \begin{pmatrix}
      z(2n)\\
      z(4n)
    \end{pmatrix}
    =
    \begin{pmatrix}
      z(2n)\\
      - z(n) + 2z(2n)
    \end{pmatrix}
    = \underbrace{
      \begin{pmatrix}
        0 & 1\\
        -1 & 2
      \end{pmatrix}
    }_{\eqqcolon A_{0}}v(n)
  \end{equation}
  as well as
  \begin{equation}
    \label{eq:pascal-A1}
    v(2n + 1) =
    \begin{pmatrix}
      z(2n + 1)\\
      z(4n + 2)
    \end{pmatrix}
    =
    \begin{pmatrix}
      3z(n) - z(2n)\\
      4z(n) - z(2n)
    \end{pmatrix}
    = \underbrace{
      \begin{pmatrix}
        3 & -1\\
        4 & -1
      \end{pmatrix}
    }_{\eqqcolon A_{1}}v(n)
  \end{equation}
  \end{subequations}
  for all $n\in\N_{0}$. See
  also~\cite[Corollary~22]{Leroy-Rigo-Stipulanti:2017:non-zero-generalized-pascal-triangle}.

  By applying the minimization algorithm mentioned in
  Remark~\ref{rem:main-thm}~(\ref{item:minimization}), we see that this is the
  smallest possible $2$-linear representation of~$z$.

\subsection{Full Asymptotics}

We now come to the main theorem of this section: We provide
an explicit formula for the summatory function
$Z(N) = \sum_{0\leq n < N}z(n)$.

\begin{theorem}[Full Asymptotics for the Number of Non-Zero Elements in
  the generalized Pascal's triangle~$\mathcal{P}_{2}$]
  \label{thm:asymp-pascal-gen}
  The summatory function~$Z$ of the sequence~$z$ equals
  \begin{equation}
    \label{eq:asymp-pascal-gen}
    Z(N) = \smashoperator{\sum_{0\leq n< N}}z(n) = N^{\kappa}\cdot\Phi_{Z}(\fractional{\log_{2}N})
  \end{equation}
  for $N\ge 1$
  with $\kappa = \log_{2}3$ and $\Phi_{Z} = 2\Phi_{D}$, where $\Phi_{D}$ is the
  periodic fluctuation which occurs in the asymptotic expansion of Stern's
  diatomic sequence in Theorem~\ref{thm:asymp-sb}.
\end{theorem}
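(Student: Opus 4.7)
The plan is to combine two elementary identities for $Z(N)$ with the asymptotic statement of Theorem~\ref{thm:asymp-sb}, and then upgrade the resulting asymptotic equality into an exact one via a short self-similarity argument.

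First, by Theorem~\ref{thm:connection-pascal-stern} we have $Z(N)=\sum_{0\le n<N}d(2n+1)$. Splitting $D(2N)=\sum_{0\le k<2N}d(k)$ according to the parity of $k$ and applying~\eqref{eq:sb-def:even} yields $D(2N)=D(N)+Z(N)$, so
\begin{equation*}
Z(N)=D(2N)-D(N).
\end{equation*}
Alternatively, summing \eqref{eq:sb-def:odd} from $n=0$ to $N-1$ and using $d(0)=0$ gives $Z(N)=D(N)+D(N+1)=2D(N)+d(N)$. Combining these two identities, I obtain the scaling relation $Z(2N)=3Z(N)$: the second identity applied at $2N$ reads $Z(2N)=2D(2N)+d(2N)=2D(2N)+d(N)$ by~\eqref{eq:sb-def:even}, and substituting $D(2N)=D(N)+Z(N)$ together with $Z(N)=2D(N)+d(N)$ gives the result.

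Second, Theorem~\ref{thm:asymp-sb} together with $Z(N)=D(2N)-D(N)$ yields
\begin{equation*}
Z(N)=(2^{\kappa}-1)N^{\kappa}\Phi_{D}(\fractional{\log_{2}N})+O(N^{\log_{2}\varphi})=2N^{\kappa}\Phi_{D}(\fractional{\log_{2}N})+O(N^{\log_{2}\varphi}),
\end{equation*}
where I use $\fractional{\log_{2}(2N)}=\fractional{\log_{2}N}$ and $2^{\kappa}=3$.

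Third, I would set $h(N)\coloneqq Z(N)-2N^{\kappa}\Phi_{D}(\fractional{\log_{2}N})$. The scaling relation above, together with the $1$-periodicity of $\Phi_{D}$ and the identity $2^{\kappa}=3$, yields $h(2N)=3h(N)$. Iterating this and using the asymptotic bound $h(N)=O(N^{\log_{2}\varphi})$ gives
\begin{equation*}
3^{k}\abs{h(N)}=\abs{h(2^{k}N)}=O\bigl((2^{k}N)^{\log_{2}\varphi}\bigr)=O(\varphi^{k}N^{\log_{2}\varphi})
\end{equation*}
for every fixed $N\ge 1$ and every $k\ge 0$. Dividing by $3^{k}$ and letting $k\to\infty$ forces $h\equiv 0$ because $\varphi<3$. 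This establishes $Z(N)=2N^{\kappa}\Phi_{D}(\fractional{\log_{2}N})=N^{\kappa}\Phi_{Z}(\fractional{\log_{2}N})$ as required, and Corollary~\ref{corollary:Stern-Brocot-exact} drops out for free via $Z(N)=2D(N)+d(N)=2\bigl(D(N)+\tfrac{1}{2}d(N)\bigr)$.

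The only real obstacle is the last step; it rests on the gap $\varphi<2^{\kappa}=3$ between the growth rate of the error term and the contraction factor coming from the scaling relation $Z(2N)=3Z(N)$, which is exactly what causes the infinite iteration to collapse the error to zero. Everything else is elementary bookkeeping with the recurrences~\eqref{eq:sb-def}, so no appeal to the linear representation of $z$ constructed in Section~\ref{sec:pascal-regular} is needed here.
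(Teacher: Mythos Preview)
Your proof is correct and takes a genuinely different, more elementary route than the paper. The paper first establishes the \emph{exact} formula $Z(N)=N^{\kappa}\Phi_{Z}(\fractional{\log_{2}N})$ by applying the refined machinery of \cite[Theorem~B]{Heuberger-Krenn:2018:asy-regular-sequences} to the two-dimensional linear representation of~$z$: it computes the Jordan decomposition of~$C$, the auxiliary matrices~$D$, $C'$ and~$K$, and verifies $Kv(0)=0$ to see that no error term survives. Only afterwards does it identify $\Phi_{Z}=2\Phi_{D}$, via the relation $Z(N)=2D(N)+d(N)$ together with a density-and-continuity argument. You bypass Theorem~B entirely: the scaling identity $Z(2N)=3Z(N)$ (which follows in two lines from~\eqref{eq:sb-def} and Theorem~\ref{thm:connection-pascal-stern}) lets you iterate the error term $h(N)=O(N^{\log_{2}\varphi})$ from Theorem~\ref{thm:asymp-sb} down to zero, because $\varphi<3$. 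Your approach is shorter, needs no matrix computations, and delivers $\Phi_{Z}=2\Phi_{D}$ automatically rather than as a separate step; the paper's approach, on the other hand, is an instance of a general technique that would still work for sequences lacking such a clean exact scaling relation.
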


For a plot of~$\Phi_{Z}$ see Figure~\ref{fig:pascal-fluc}.

\begin{figure}[htbp]
  \centering
  \includegraphics{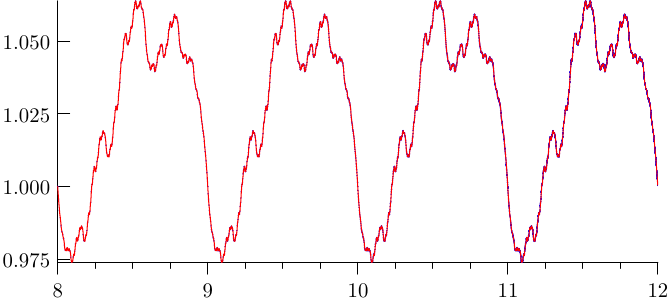}
  \caption{Fluctuation in the asymptotic expansion of
    the summatory function~$Z$. The plot shows the periodic
    fluctuation~$\Phi_{Z}(u) = 2\Phi_{D}(u)$ approximated by its Fourier polynomial of
    degree~$2000$ (red) as well as the
    function~$Z(2^{u})/2^{\kappa u}$ (blue).  }
  \label{fig:pascal-fluc}
\end{figure}

\begin{proof}[Proof of Theorem~\ref{thm:asymp-pascal-gen}]
  We use the $2$-dimensional linear representation $(A_{0}, A_{1}, v)$ given
  in~\eqref{eq:pascal-gen-v} and~\eqref{eq:pascal-A0-A1}, and we want to
  apply~\cite[Theorem~B]{Heuberger-Krenn:2018:asy-regular-sequences} instead of
  Theorem~\ref{thm:asymp} in order to get rid of the error term coming from
  Theorem~\ref{thm:asymp}.

  The left eigenvectors of the matrix
  \begin{equation*}
    C = A_{0} + A_{1} =
    \begin{pmatrix}
      3 & 0\\
      3 & 1
    \end{pmatrix}
  \end{equation*}
  are given by
  $v_{1} = (1, -2/3)$, which corresponds to the
    eigenvalue~$\lambda = 1$, and
  $v_{3} = (1, 0)$, which corresponds to the
    eigenvalue~$\lambda = 3$.
  Since the vector~$v_{3}$ is equal to the left ``choice
  vector''~$e_{1}$ of the linear representation (see~\eqref{eq:xn-lin-rep}), it
  follows from~\cite[Section~6.3]{Heuberger-Krenn:2018:asy-regular-sequences}
  that the eigenvalue~$\lambda = 3$ is the only one which contributes to
  the summatory function.

  By the above observations, the matrix~$C$ has the Jordan decomposition $C = T^{-1}JT$ with
  \begin{equation*}
    J =
    \begin{pmatrix}
      3 & 0\\
      0 & 1
    \end{pmatrix}
    \qq{and}
    T =
    \begin{pmatrix}
      1 & 0\\
      1 & -2/3
    \end{pmatrix}.
  \end{equation*}
  With the notation
  introduced in~\cite[Section~6.2]{Heuberger-Krenn:2018:asy-regular-sequences}, we
  moreover have
  \begin{equation*}
    D =
    \begin{pmatrix}
      1 & 0\\
      0 & 0
    \end{pmatrix} \qq{and}
    C' = T^{-1}DJT =
    \begin{pmatrix}
      3 & 0\\
      9/2 & 0
    \end{pmatrix}
  \end{equation*}
  as well as
  \begin{equation*}
    K = T^{-1}DT(I_{2} - C')^{-1}(I_{2} - A_{0}) =
    \begin{pmatrix}
      -1/2 & 1/2\\
      -3/4 & 3/4
    \end{pmatrix},
  \end{equation*}
  where $I_{2}$ denotes the $2\times 2$ identity matrix. Due to
  Theorem~\ref{thm:connection-pascal-stern} and the second paragraph of the proof of Theorem~\ref{thm:asymp-sb}, we can choose
  $R = \varphi = \frac{1 + \sqrt{5}}{2}$.  Now we are ready to
  apply~\cite[Theorem~B]{Heuberger-Krenn:2018:asy-regular-sequences} for the
  eigenvalue~$\lambda = 3$. By noting $Kv(0) = 0$, we obtain
  \begin{equation}
    \label{eq:formula-for-Z-proof}
    Z(N) = N^{\kappa}\cdot\Phi_{Z}(\fractional{\log_{2}N}),
  \end{equation}
  where~$\Phi_{Z}$ is a $1$-periodic function which is Hölder continuous with any
  exponent smaller than $\kappa-\log_{2}\varphi$.

  Finally, we argue that $\Phi_{Z} = 2\Phi_{D}$. Due to
  Theorem~\ref{thm:connection-pascal-stern} and \eqref{eq:sb-def:odd}, we have
  \begin{align}
    Z(N) &= \smashoperator{\sum_{0\leq n < N}}z(n)\notag\\
    &= \smashoperator{\sum_{0\leq n < N}} d(2n + 1)\notag\\
         &= \smashoperator{\sum_{0\leq n < N}}d(n) + \smashoperator{\sum_{0\leq n < N}}d(n+1)\notag\\
         &= 2D(N) + d(N)\label{eq:pascal-stern-exact-formula}\\
    &= 2D(N) + O(N^{\log_{2}\varphi})\notag
  \end{align}
  as $N \to \infty$. Applying Theorem~\ref{thm:asymp-sb} on $D(N)$ and combining the result
  with~\eqref{eq:formula-for-Z-proof} yields
  \begin{equation*}
    N^{\kappa}\cdot\Phi_{Z}(\fractional{\log_{2}N}) = N^{\kappa}\cdot 2\Phi_{D}(\fractional{\log_{2}N}) + O(N^{\log_{2}\varphi}).
  \end{equation*}
  Since the set $\setm{\fractional{\log_{2}N}}{N\in\N}$ is dense in the
  interval~$[0,1]$ and both $\Phi_{Z}$ and $2\Phi_{D}$ are continuous, they
  have to be equal. This concludes the proof.
\end{proof}

We are now able to prove Corollary~\ref{corollary:Stern-Brocot-exact}.
\begin{proof}[Proof of Corollary~\ref{corollary:Stern-Brocot-exact}]
  The statement of the corollary follows from
  combining~\eqref{eq:pascal-stern-exact-formula}
  with Theorem~\ref{thm:asymp-pascal-gen}.
\end{proof}



\section{Number of Unbordered Factors in the Thue--Morse Sequence}
\label{sec:ub}

\subsection{Introduction of the Sequence}

The Thue--Morse sequence~$t$ is the parity sequence of the
binary sum of digits; see \oeis{A010060}.
In this section, we study the number of
unbordered factors in the Thue--Morse sequence, which are a special
kind of subsequences of~$t$.  Unbordered factors of the Thue--Morse
sequence were first investigated by Currie and Saari
in~\cite{Currie-Saari:2009:least-periods-factors}. Some more
interesting insights concerning this special type of factors can, for
example, be found
in Charlier, Rampersad and Shallit~\cite{Charlier-Rampersad-Shallit:2012:enumeration-properties-automatic-seq}
and Go\v{c}, Henshall and Shallit~\cite{Goc-Henshall-Shallit:2013:automatic-theorem-proving}; see also
Go\v{c}, Rampersad, Rigo and Salimov~\cite{Goc-Rampersad-Rigo-Salimov:2014:abelian-bordered-words}.

Before we formally define the sequence of study, we give the following
definitions from combinatorics on words, which can, for example, also
be found in Lothaire~\cite{Lothaire:2002:algeb}.

\begin{definition}[Unbordered Factor]
  Let $\mathcal{A}$ be an alphabet and $x\colon\N_{0}\to\mathcal{A}$ a
  sequence.
  \begin{enumerate}
  \item For integers $0\leq i \leq j$, we let $x[i\twoldots j]$ denote
    the subword $x(i)x(i+1)\ldots x(j-1)x(j)$ of the infinite word
    $x(0)x(1)x(2)x(3)\dots$.
  \item We say that a word $w\in\mathcal{A}^{\star}$ is a
    \emph{factor of $x$} if there exist integers $0\leq i\leq j$
    such that $w = x[i\twoldots j]$.
  \item A word~$w\in\mathcal{A}^{\star}$ is said to be \emph{bordered}
    if there exists a non-empty word $v\neq w$ which is both a prefix
    and a suffix of~$w$. In this case we call~$v$ a \emph{border}
    of~$w$. If~$w$ is not bordered, then it is said to be
    \emph{unbordered}.
  \end{enumerate}
\end{definition}
In particular, this implies that the empty word~$\varepsilon$ as well
as every word of length~$1$ is unbordered. Moreover, a word $w = ab$
of length~$2$ with $a$, $b\in\mathcal{A}$ is unbordered if and only if
$a\neq b$.

We illustrate the previous definitions by the following example.

\begin{example}[Some (Un)Bordered Factors of the Thue--Morse Sequence]
  If we write the Thue--Morse sequence as an infinite word
  $t = t(0)t(1)t(2)\dots$, then it starts with
  \begin{equation*}
    t = 01101001\,10010110\,10010110\,01101001\dots.
  \end{equation*}
  We visually structured the sequence into blocks of eight letters
  in order to emphasize some of its properties.
  Some bordered factors of~$t$ are given in Table~\ref{tab:b-in-t},
  whereas the factors in Table~\ref{tab:ub-in-t} are unbordered.
  
  \begin{table}[htbp]
    \centering
    \begin{tabular}{l|c|c}
      bordered factor & border & length\\\hline
      $t[5\twoldots 6] = 00$ & $0$ & $2$\\
      $t[1\twoldots 2] = 11$ & $1$ & $2$\\
      $t[3\twoldots 5] = 010$ & $0$ & $3$\\
      $t[2\twoldots 4] = 101$ & $1$ & $3$\\
      $t[2\twoldots 5] = 1010$ & $10$ & $4$\\
      $t[0\twoldots 9] = 0110100110$ & $0110$ & $10$
    \end{tabular}
    \caption{Some bordered factors of the Thue--Morse sequence~$t$}
    \label{tab:b-in-t}
  \end{table}

  \begin{table}[htbp]
    \centering
    \begin{tabular}{l|c}
      unbordered factor & length\\\hline
      $\varepsilon$ & $0$\\
      $t[0\twoldots 0] = 0$ & $1$\\
      $t[1\twoldots 1] = 1$ & $1$\\
      $t[0\twoldots 1] = 01$ & $2$\\
      $t[2\twoldots 3] = 10$ & $2$\\
      $t[0\twoldots 2] = 011$ & $3$\\
      $t[1\twoldots 3] = 110$ & $3$\\
      $t[4\twoldots 6] = 100$ & $3$\\
      $t[5\twoldots 7] = 001$ & $3$\\
    \end{tabular}
    \caption{All unbordered factors of the Thue--Morse sequence~$t$ up to length~$3$}
    \label{tab:ub-in-t}
  \end{table}
  It is easy to check that Table~\ref{tab:ub-in-t} contains a complete
  list of the unbordered factors of the Thue--Morse sequence with
  length smaller than~$4$.
\end{example}

Now the following question may arise: Is there an unbordered factor
of~$t$ with length~$n$ for every $n \geq 0$? Currie and Saarie
showed in~\cite{Currie-Saari:2009:least-periods-factors} that~$t$ has
an unbordered factor of length~$n$ if~$n\not\equiv 1
\tpmod{6}$. However, since
\begin{equation*}
  t[39\twoldots 69] = 0011010010110100110010110100101
\end{equation*}
is an unbordered factor of length~$31$ in~$t$, the given condition is
not necessary. Go\v{c}, Henshall and Shallit finally proved the following
characterization.

\begin{theorem}[Go\v{c}, Henshall and Shallit~{\cite[Theorem~4]{Goc-Henshall-Shallit:2013:automatic-theorem-proving}}]
  \label{thm:f(n)=0}
  There is an unbordered factor of length~$n$ in~$t$ if and only if
  $(n)_{2}\notin 1(01^{*}0)^{*}10^{*}1$, where~$(n)_{2}$ denotes the
  binary digit expansion of~$n\in\N_{0}$ and $1(01^{*}0)^{*}10^{*}1$
  has to be considered as a regular expression.
\end{theorem}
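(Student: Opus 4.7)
The plan is to leverage the fact that the Thue--Morse sequence $t$ is $2$-automatic, which by B\"uchi's theorem makes the first-order theory of $(\N_{0}, +, V_{2}, t)$ decidable, where $V_{2}(x)$ denotes the largest power of $2$ dividing $x$; this is the approach pioneered by Go\v{c}, Henshall and Shallit. The property ``$t$ has an unbordered factor of length $n$'' can be expressed as a first-order sentence $\Phi(n)$ in this structure, and decidability then produces a DFA reading binary expansions of $n$ for the set of those $n$ satisfying $\Phi$. Comparing this DFA with one for the complement (within binary expansions of positive integers) of $1(01^{*}0)^{*}10^{*}1$ then settles the equivalence.

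More concretely, I would first introduce the auxiliary formula
\begin{equation*}
  \mathrm{Eq}(i,j,n)\colon\ \forall k\ \bigl(k < n \implies t(i+k) = t(j+k)\bigr),
\end{equation*}
expressing that the two length-$n$ factors of $t$ starting at positions $i$ and $j$ agree. A length-$n$ factor beginning at position $i$ is bordered exactly when there exists $v$ with $0 < v < n$ such that $\mathrm{Eq}(i, i+n-v, v)$ holds; the negation of this condition, existentially quantified over $i$, yields the desired sentence $\Phi(n)$. The atomic predicates $t(x) = 0$ and $t(x) = 1$ are definable in $(\N_{0}, +, V_{2})$ because $t$ is $2$-automatic, so $\Phi$ is indeed a legal formula in the B\"uchi arithmetic.

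The second step is to convert $\Phi$ into a DFA via the standard automata-theoretic decision procedure (product and projection constructions for conjunctions and existential quantifiers, complementation via subset construction for negations and universal quantifiers), to minimize the resulting automaton, and to check that it accepts precisely those binary expansions that do not lie in $1(01^{*}0)^{*}10^{*}1$. The main obstacle I expect is the exponential blow-up arising from the alternation of quantifiers, in particular from the universal quantifier in $\mathrm{Eq}$ together with the complementation required to express ``unbordered'', which renders a hand computation infeasible; in practice one delegates this to a tool such as Walnut. Once both DFAs are available, verifying that one accepts exactly the complement of the other within $1\{0,1\}^{*}$ is a purely mechanical final step.
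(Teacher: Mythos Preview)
The paper does not prove this statement at all; it merely quotes it from Go\v{c}, Henshall and Shallit~\cite{Goc-Henshall-Shallit:2013:automatic-theorem-proving} as background for the sequence~$f$ and then moves on. So there is no ``paper's own proof'' to compare against. Your sketch is, however, an accurate high-level description of the method used in the cited source: express ``$t$ has an unbordered factor of length~$n$'' as a first-order formula over $(\N_0,+,V_2,t)$, invoke the automata-theoretic decision procedure for this theory, and verify mechanically that the resulting automaton recognizes the complement of $1(01^{*}0)^{*}10^{*}1$. The formula you wrote and the remark about quantifier alternation and the need for a tool such as Walnut are in line with how this is actually carried out.
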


In this article, we are interested in the number of unbordered factors of
length~$n$ in~$t$; cf.~\oeis{A309894}. We let $f$ denote this sequence.
The first few elements of this sequence are given
in Table~\ref{tab:ub}.

\begin{table}[htbp]
  \label{tab:ub-initial-values}
  \centering
  \begin{tabular}{c|cccccccccccccccccccccc}
    $n$ & $0$ & $1$ & $2$ & $3$ & $4$ & $5$ & $6$ & $7$ & $8$ & $9$ & $10$ & $11$ & $12$ & $13$ & $14$ & $15$ & \ldots & $23$\\\hline
    $f(n)$ & $1$ & $2$ & $2$ & $4$ & $2$ & $4$ & $6$ & $0$ & $4$ & $4$ & $4$ & $4$ & $12$ & $0$ & $4$ & $4$ & \ldots & $8$
  \end{tabular}
  \caption{First few elements of the sequence~$f$ which counts the number of unbordered factors in the Thue--Morse sequence of length~$n$}
  \label{tab:ub}
\end{table}

Theorem~\ref{thm:f(n)=0} characterizes the numbers~$n$ with $f(n) = 0$.
However, for our purpose, this is not satisfying
since we are interested in the numbers~$f(n)$ themselves. In particular, we
hope that~$f$ is regular and that we can determine a linear
representation for it, which consequently can be used to derive an asymptotic result for
the summatory function of~$f$.

Finally, before we come to the determination of a linear
representation of~$f$, the authors in~\cite{Goc-Mousavi-Shallit:2013}
also investigated the growth rate of~$f$, which is given as follows.

\begin{theorem}[Go\v{c}, Mousavi and Shallit~{\cite[Theorem~2]{Goc-Mousavi-Shallit:2013}}]
  \label{prop:ub-growth-f}
  The inequality $f(n) \leq n$ holds for all $n\geq 4$, with $f(n) = n$
  infinitely often. This implies
  \begin{equation*}
    \limsup_{n\geq 1}\frac{f(n)}{n} = 1.
  \end{equation*}
\end{theorem}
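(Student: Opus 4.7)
My approach is to leverage the $2$-recursive (and hence $2$-regular) structure of~$f$ that will presumably be established in the remainder of Section~\ref{sec:ub}, together with Theorem~\ref{thm:f(n)=0} which already pins down the zeros of~$f$. Granted a system of recurrences expressing each $f(2^{M}n+s)$ as a $\C$-linear combination of values $f(2^{m}n+k)$ for $\ell\le k\le u$, both statements of the theorem should follow by strong induction on~$n$.

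For the upper bound $f(n)\le n$ when $n\ge 4$, I would proceed by strong induction. The base cases for all $n$ below a threshold~$N_{0}$ (depending on the depth $M$ and the index shift bounds of the recurrence) can be verified directly from Table~\ref{tab:ub} once extended; when $f(n)=0$ the bound is trivial by Theorem~\ref{thm:f(n)=0}. For the inductive step I would write $n = 2^{M}n'+s$ with $0\le s<2^{M}$, apply the recurrence to bound $f(n)$ by $\sum_{k}|c_{s,k}|\,f(2^{m}n'+k)$, and then substitute the inductive estimate $f(2^{m}n'+k)\le 2^{m}n'+k\le n/2^{M-m}+O(1)$. This reduces the inequality to a finite check on the recurrence coefficients, possibly after splitting into residue classes of~$s$ and invoking Theorem~\ref{thm:f(n)=0} for the classes that force $f$ to vanish.

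For the attainment $f(n)=n$ infinitely often, Table~\ref{tab:ub} already exhibits $f(6)=6$. Guided by the self-similar structure of the Thue--Morse word under the morphism $0\mapsto 01$, $1\mapsto 10$, I would conjecture and prove that $f(3\cdot 2^{k})=3\cdot 2^{k}$ for every $k\ge 1$. The inductive step should follow from a doubling relation of the form $f(2n)=2f(n)$ specialised to arguments $n = 3\cdot 2^{k-1}$ whose binary expansions $11\,0^{k-1}$ are of a particularly clean shape; such an identity should be a direct consequence of the recurrences at these arguments. The $\limsup$ statement is then immediate from the first two assertions.

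\textbf{Main obstacle.} The principal difficulty is that the recurrences for~$f$ very likely involve coefficients that are not all non-negative, so $f(n)\le n$ is not a direct consequence of monotonicity or positivity; genuine cancellations must be exploited. This is exactly the combinatorial information that the Walnut-based argument in~\cite{Goc-Mousavi-Shallit:2013} locates automatically by exploring the underlying automaton, and reproducing it by hand will require a case analysis on the binary expansion of~$n$ that mirrors the automaton's state structure. A secondary concern is organising the induction so that the $O(1)$ slack picked up at each level of the recurrence does not accumulate and spoil the sharp bound $f(n)\le n$; this may necessitate proving a slightly stronger inequality with explicit correction terms as the actual induction hypothesis.
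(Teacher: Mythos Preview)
The paper does not contain a proof of this theorem; it is quoted verbatim from Go\v{c}, Mousavi and Shallit~\cite{Goc-Mousavi-Shallit:2013} and used only as background motivation before the linear representation of~$f$ is set up. There is therefore no ``paper proof'' to compare your proposal against.

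As for the proposal itself: your plan for the attainment part is sound and in fact works immediately from Theorem~\ref{prop:ub-recs-goc}. The relation $f(4n)=2f(2n)$ for $n\ge 2$ gives $f(3\cdot 2^{k})=2f(3\cdot 2^{k-1})$ for all $k\ge 2$, so together with the base case $f(6)=6$ one gets $f(3\cdot 2^{k})=3\cdot 2^{k}$ for every $k\ge 1$, exactly as you guessed.

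Your plan for the upper bound, however, runs into a real obstruction that you have already diagnosed but perhaps underestimate. Passing to absolute values of the coefficients in Corollary~\ref{cor:ub-q-recursive} fails: for instance, $f(8n+7)=2f(4n+1)+f(4n+3)$ would yield, under $f(m)\le m$, only $f(8n+7)\le 12n+5$, and $f(8n+2)=f(4n+1)+f(4n+3)\le 8n+4$. The slack is not $O(1)$ but linear, so no strengthened induction hypothesis with an additive correction will absorb it. One would need to track several linear functionals of the vector $(f(4n),f(4n+1),f(4n+2),f(4n+3))$ simultaneously and find an invariant polytope, which is precisely the kind of search the automated approach in~\cite{Goc-Mousavi-Shallit:2013} carries out. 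So your outline is the right shape, but the inductive step as written does not close; reproducing the original argument by hand would require finding those auxiliary inequalities explicitly.
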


\subsection{Regularity and a Linear Representation}
\label{sec:ub-lin-rep}

Go\v{c}, Mousavi and Shallit came up with the following recurrence relations,
which will help us to conclude that~$f$ is $2$-regular and to
construct a $2$-linear representation of~$f$.

\begin{theorem}[Go\v{c}, Mousavi and Shallit~{\cite[Proof of Theorem~2]{Goc-Mousavi-Shallit:2013}}]
  \label{prop:ub-recs-goc}
  For the number~$f(n)$ of unbordered factors of length~$n$ in
  the Thue--Morse sequence, we have
  \begin{align*}
    f(4n) &= 2f(2n), &(n\geq 2)\\
    f(4n + 1) &= f(2n + 1), &(n\geq 0)\\
    f(8n + 2) &= f(2n + 1) + f(4n + 3), &(n\geq 1)\\
    f(8n + 3) &= -f(2n + 1) + f(4n + 2), &(n\geq 2)\\
    f(8n + 6) &= -f(2n + 1) + f(4n + 2) + f(4n + 3), &(n\geq 2)\\
    f(8n + 7) &= 2f(2n + 1) + f(4n + 3). &(n\geq 3)
  \end{align*}
\end{theorem}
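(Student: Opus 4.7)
My plan is to exploit the $2$-automaticity of the Thue--Morse sequence~$t$. The structure $\langle \N, +, V_{2}, T\rangle$, where $T(i) \Leftrightarrow t(i)=1$, admits a decision procedure by B\"uchi's theorem on automatic sequences, and the property ``$t[i\twoldots i+n-1]$ is an unbordered factor of $t$ that has no earlier occurrence in $t$'' is first-order expressible in this structure (the unbordered predicate is $\forall\, 0<k<n,\ \exists\, 0\leq j<k,\ T(i+j)\not\leftrightarrow T(i+n-k+j)$, and uniqueness of occurrence is similar). Hence $f(n)$ is the cardinality of a first-order-definable set of starting positions, and each of the six recurrences is a linear identity between such cardinalities that can be verified mechanically by constructing the associated synchronised automata and comparing accepting paths. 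This is precisely the strategy implemented in the Walnut theorem prover and used by Go\v{c}, Mousavi and Shallit~\cite{Goc-Mousavi-Shallit:2013}. The offset conditions $n \geq 1, 2, 3$ on the individual recurrences are inherent to the combinatorial statement and must be cross-checked against the initial data in Table~\ref{tab:ub}.

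For a more conceptual combinatorial derivation, I would work with the Thue--Morse morphism $\mu \colon 0 \mapsto 01,\ 1 \mapsto 10$, whose infinite fixed point is~$t$. Every factor of~$t$ of length at least~$2$ admits a unique ``desubstitution'' $s\,\mu(w)\,s'$ with $s, s' \in \{\varepsilon, 0, 1\}$ and $w$ a factor of~$t$, determined by the parity of the starting position. The positive-coefficient recurrences $f(4n) = 2f(2n)$ and $f(4n+1) = f(2n+1)$, together with the positive parts of $f(8n+2)$ and $f(8n+7)$, should follow from explicit bijections between the unbordered factors on the left and the corresponding shorter unbordered factors of $t$, after distinguishing cases by the parity of the starting position and by the boundary letters $s, s'$. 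The factor of two in $f(4n) = 2f(2n)$ reflects precisely the two admissible parities for the starting position of a length-$4n$ factor that arises as $\mu(w)$ possibly shifted.

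The main obstacle will be the signed recurrences $f(8n+3) = -f(2n+1) + f(4n+2)$ and $f(8n+6) = -f(2n+1) + f(4n+2) + f(4n+3)$. Since negative coefficients preclude a direct bijection, an inclusion--exclusion argument is needed: one must identify a natural superset of the unbordered length-$(8n+3)$ factors whose cardinality equals $f(4n+2)$, and then show that the surplus (factors in this superset that turn out to be bordered after the desubstitution-based extension) is in bijection with the unbordered factors of length $2n+1$. Pinning down this surplus bijection cleanly, and verifying that it respects the offset $n \geq 2$, is the technical crux. The automatic-theorem-proving route bypasses this analysis entirely, at the cost of insight into why the signs appear; for our purposes it suffices to cite~\cite{Goc-Mousavi-Shallit:2013}, but any hand proof would necessarily wrestle with this inclusion--exclusion step.
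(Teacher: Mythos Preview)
The paper does not prove this statement at all; it is quoted verbatim as a result of Go\v{c}, Mousavi and Shallit and attributed to~\cite[Proof of Theorem~2]{Goc-Mousavi-Shallit:2013}. Your identification of the automatic-theorem-proving route (first-order definability over the automatic structure of~$t$, implemented in Walnut) is exactly the method used in that cited work, so your proposal matches the intended approach; the additional combinatorial sketch via desubstitution is extra and not pursued in either paper.
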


This directly yields the following corollary.

\begin{corollary}
  \label{cor:ub-q-recursive}
  For the number~$f(n)$ of unbordered factors of length~$n$ in
  the Thue--Morse sequence, we have
  \begin{align*}
    f(8n) &= 2f(4n), &(n\geq 1)\\
    f(8n + 1) &= f(4n + 1),&(n\geq 0)\\
    f(8n + 2) &= f(4n + 1) + f(4n + 3), &(n\geq 1)\\
    f(8n + 3) &= -f(4n + 1) + f(4n + 2), &(n\geq 2)\\
    f(8n + 4) &= 2f(4n + 2), &(n\geq 1)\\
    f(8n + 5) &= f(4n + 3), &(n\geq 0)\\
    f(8n + 6) &= -f(4n + 1) + f(4n + 2) + f(4n + 3), &(n\geq 2)\\
    f(8n + 7) &= 2f(4n + 1) + f(4n + 3). &(n\geq 3)
  \end{align*}
  In particular, $f$ is $2$-recursive with offset~$n_{0}=3$, exponents~$M=3$
  and $m = 2$ and index shift bounds~$\ell = 0$ and $u=3$.
\end{corollary}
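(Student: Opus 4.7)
The plan is to derive each of the eight recurrences in Corollary~\ref{cor:ub-q-recursive} from the six recurrences supplied by Theorem~\ref{prop:ub-recs-goc} using only routine substitutions, and then to read off the parameters~$M$, $m$, $\ell$, $u$, $n_{0}$ for the definition of a $q$-recursive sequence.

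First, I would handle the four indices $8n$, $8n+1$, $8n+4$, $8n+5$, which are not listed in Theorem~\ref{prop:ub-recs-goc} at all. These come directly from the first two recurrences of that theorem by choosing the substitution variable appropriately: $f(8n)=f(4(2n))=2f(2\cdot 2n)=2f(4n)$ (valid for $2n\ge 2$, i.e.\ $n\ge 1$), $f(8n+4)=f(4(2n+1))=2f(4n+2)$ (valid for $2n+1\ge 2$, i.e.\ $n\ge 1$), $f(8n+1)=f(4(2n)+1)=f(4n+1)$ and $f(8n+5)=f(4(2n+1)+1)=f(4n+3)$ (both valid for $n\ge 0$). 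The remaining four relations, namely those for $8n+2$, $8n+3$, $8n+6$, $8n+7$, already appear in Theorem~\ref{prop:ub-recs-goc} with right-hand sides that contain the term $f(2n+1)$. To bring them into the required shape (only arguments of the form $4n+k$), I would replace~$f(2n+1)$ by~$f(4n+1)$, which is legitimate by the second recurrence of Theorem~\ref{prop:ub-recs-goc} applied with substitution variable~$n$ and already holds for all~$n\ge 0$; this substitution never worsens the offset condition.

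The only genuine bookkeeping is to track the offset of each individual equation and take their maximum: the listed lower bounds on~$n$ are $1$, $0$, $1$, $2$, $1$, $0$, $2$, $3$, whose maximum is $3$. This gives the global offset $n_{0}=3$. The right-hand sides involve only $f(4n)$, $f(4n+1)$, $f(4n+2)$, $f(4n+3)$, so $\ell=0$ and $u=3$; the left-hand sides range over all residues modulo $2^{3}$, hence $M=3$ and $m=2$. The admissibility condition $n_{0}\ge\max\{-\ell/q^{m},0\}=0$ from Definition~\ref{definition:q-reqursive-sequence} is satisfied trivially.

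I expect no serious obstacle in this argument; the only thing to be careful with is the direction of the substitution (the theorem supplies $f(4m+1)=f(2m+1)$, which we use as an identity that rewrites $f(2n+1)$ in terms of $f(4n+1)$, rather than the other way round) and the aggregation of offsets. In particular, one should verify that applying this substitution does not lift the offset of any of the four relations above their already-stated lower bounds, which is immediate because the substituted identity holds from $n=0$.
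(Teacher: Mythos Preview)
Your proof is correct and is exactly the routine derivation the paper has in mind; the paper itself gives no explicit proof beyond the phrase ``This directly yields the following corollary,'' and your substitutions (applying $f(4m)=2f(2m)$ with $m=2n,2n+1$, applying $f(4m+1)=f(2m+1)$ with $m=2n,2n+1$, and rewriting $f(2n+1)$ as $f(4n+1)$ in the remaining four relations) together with the offset bookkeeping fill in precisely the intended details.
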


Together with the initial values given in
Table~\ref{tab:ub-initial-values}, the recurrence relations stated in
the previous corollary completely describe the sequence~$f$.

In order to obtain a $2$-linear representation of~$f$, we first of all
use Theorem~\ref{prop:recursive-special-case} (instead of
Theorem~\ref{thm:remark-2.1-general} which would lead to a linear
representation of larger dimension). This will give us
matrices~$A_{0}$ and~$A_{1}$ as well as a vector-valued sequence~$v$
such that the relations $v(2n) = A_{0}v(n)$ and $v(2n + 1) = A_{1}v(n)$
hold for all~$n\geq 3$, i.e., a $2$-linear representation with offset~$3$ of~$f$.

With the notation of Theorem~\ref{prop:recursive-special-case}, the matrices~$B_{0}$
and~$B_{1}$ are given by
\begin{equation}
  \label{eq:ub-system-matrices}
  B_{0} =
  \begin{pmatrix}
    \tikzmark{l}2 & 0 & 0 & 0\\
    0 & 1 & 0 & 0\\
    0 & 1 & 0 & 1\\
    0 & -1 & 1 & 0\tikzmark{r}
  \end{pmatrix}
  \qq{and} B_{1} =
  \begin{pmatrix}
    \tikzmark{l2}0 & 0 & 2 & 0\\
    0 & 0 & 0 & 1\\
    0 & -1 & 1 & 1\\
    0 & 2 & 0 & 1\tikzmark{r2}
  \end{pmatrix},
  \DrawBox[ForestGreen, thick]{l}{r}
  \DrawBox[Maroon, thick]{l2}{r2}
\end{equation}
where the relevant spectrum---relevant with respect to
Proposition~\ref{cor:sprectrum-C-Bs}---is
\begin{equation*}
  \sigma(B_{0}+B_{1}) = \set[\big]{-\sqrt{3} + 1, 1,2,\sqrt{3}+1},
\end{equation*}
and all eigenvalues are simple.

We now apply Theorem~\ref{prop:recursive-special-case} and obtain
\begin{equation*}
  v =
  \begin{pmatrix}
    f\\
    f\circ (n\mapsto 2n)\\
    f\circ (n\mapsto 2n+1)\\
    f\circ (n\mapsto 4n)\\
    f\circ (n\mapsto 4n+1)\\
    f\circ (n\mapsto 4n+2)\\
    f\circ (n\mapsto 4n+3)
  \end{pmatrix}
\end{equation*}
as well as the matrices
\setcounter{MaxMatrixCols}{20}
\begin{equation}
  \label{eq:ub-As}
  A_{0} =
  \begin{pmatrix}
    \tikzmark{l}0 & 1 & 0 & \tikzmark{l2}0 & 0 & 0 & 0\\
    0 & 0 & 0 & 1 & 0 & 0 & 0\\
    0 & 0 & 0\tikzmark{r} & 0 & 1 & 0 & 0\tikzmark{r2}\\
    0 & 0 & 0 & \tikzmark{left}2 & 0 & 0 & 0\\
    0 & 0 & 0 & 0 & 1 & 0 & 0\\
    0 & 0 & 0 & 0 & 1 & 0 & 1\\
    0 & 0 & 0 & 0 & -1 & 1 & 0\tikzmark{right}\\
  \end{pmatrix}
  \DrawBox[ForestGreen, thick]{left}{right}
  \DrawBox[gray, thick]{l}{r}
  \DrawBox[gray, thick]{l2}{r2}
  \text{\ \ and\ \ }
   A_{1} =
  \begin{pmatrix}
    \tikzmark{l}0 & 0 & 1 & \tikzmark{l2}0 & 0 & 0 & 0 \\
    0 & 0 & 0 & 0 & 0 & 1 & 0\\
    0 & 0 & 0\tikzmark{r} & 0 & 0 & 0 & 1\tikzmark{r2} \\
    0 & 0 & 0 & \tikzmark{left}0 & 0 & 2 & 0 \\
    0 & 0 & 0 & 0 & 0 & 0 & 1\\
    0 & 0 & 0 & 0 & -1 & 1 & 1\\
    0 & 0 & 0 & 0 & 2 & 0 & 1\tikzmark{right}
  \end{pmatrix}.
  \DrawBox[Maroon, thick]{left}{right}
  \DrawBox[gray, thick]{l}{r}
  \DrawBox[gray, thick]{l2}{r2}
\end{equation}

Next, we use Theorem~\ref{thm:offset-correction} in order to ``adjust'' the initial values
of the recurrence relations of~$f$ given by
Corollary~\ref{cor:ub-q-recursive} for $0\leq n \leq 2$.

For this purpose we recall the notation $\delta_{k}\colon\N_{0}\to\C$ with
$\delta_{k}(n)\coloneqq \iverson{n=k}$ for $0\leq k \leq 2$ as introduced in Theorem~\ref{thm:offset-correction} and set
\begin{equation*}
  w_{r,k} \coloneqq v(2k+r) - A_{r}v(k)
\end{equation*}
for $0\leq k \leq 2$ and $0 \le r < 2$. Then the matrices
$W_{0} = (w_{0,0},w_{0,1},w_{0,2})$ and
$W_{1} = (w_{1,0},w_{1,1},w_{1,2})$ are given by
\begin{equation*}
  W_{0} =
  \begin{pmatrix}
    0 & 0 & 0\\
    0 & 0 & 0\\
    0 & 0 & 0\\
    -1 & 0 & 0\\
    0 & 0 & 0\\
    -4 & 0 & 0\\
    4 & 2 & 0
  \end{pmatrix}
  \qq{and}
  W_{1} =
  \begin{pmatrix}
    0 & 0 & 0\\
    0 & 0 & 0\\
    0 & 0 & 0\\
    -2 & 0 & 0\\
    0 & 0 & 0\\
    2 & 2 & 0\\
    -8 & -4 & -4
  \end{pmatrix}.
\end{equation*}
Moreover, recall that we have introduced matrices $J_{0}$ and $J_{1}$
in~\eqref{eq:matrix-J-corr-n0} by
\begin{equation*}
  J_{r} \coloneqq \bigl(\iverson{2j = k - r}\bigr)_{\substack{0\leq k < n_{0}\\ 0\leq j < n_{0}}},
\end{equation*}
which in this example gives
\begin{equation*}
  J_{0} =
  \begin{pmatrix}
    1 & 0 & 0\\
    0 & 0 & 0\\
    0 & 1 & 0\\
  \end{pmatrix}
  \qq{and}
  J_{1} =
  \begin{pmatrix}
    0 & 0 & 0\\
    1 & 0 & 0\\
    0 & 0 & 0\\
  \end{pmatrix}.
\end{equation*}

Then by Theorem~\ref{thm:offset-correction} we obtain a $2$-linear
representation $(\widetilde{A}_{0}, \widetilde{A}_{1}, \widetilde{v})$
of~$f$ by the vector~$\widetilde{v}$ given in block form as
\begin{equation*}
  \widetilde{v} =
  \begin{pmatrix}
    v\\
    \delta_{0}\\
    \delta_{1}\\
    \delta_{2}
  \end{pmatrix}
\end{equation*}
as well as the block matrices
\begin{equation}
  \label{eq:ub-Atilde}
  \widetilde{A}_{0} =
  \begin{pmatrix}
    A_{0} & W_{0}\\
    0 & J_{0}
  \end{pmatrix}
  \qq{and}
  \widetilde{A}_{1} =
  \begin{pmatrix}
    A_{1} & W_{1}\\
    0 & J_{1}
  \end{pmatrix}.
\end{equation}

This final $2$-linear representation has dimension~$10$. By applying the
minimization algorithm mentioned in
Remark~\ref{rem:main-thm}~(\ref{item:minimization}), we see that the
smallest possible $2$-linear representation of~$f$ has dimension~$8$.




\subsection{Joint Spectral Radius}

Next, we determine the joint spectral radius of~$B_{0}$
and $B_{1}$, which we need to determine the asymptotics of
the summatory function of~$f$ in the next section.

\begin{lemma}
  \label{lem:jsr-ub}
  The joint spectral radius of $\mathcal{B}=\{B_{0}, B_{1}\}$ is~$2$ and
  $\mathcal{B}$ has the simple growth property.
\end{lemma}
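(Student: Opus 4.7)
My plan is to prove both claims (value of the joint spectral radius, and simple growth) in one shot by exhibiting an explicit weighted maximum norm on $\C^4$ with respect to which both $B_0$ and $B_1$ have operator norm exactly $2$; the matching lower bound then comes for free from a single eigenvector of $B_0$.

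For the lower bound, the first column of $B_0$ is $(2, 0, 0, 0)^{\top}$, so $e_1$ is an eigenvector of $B_0$ with eigenvalue~$2$. Since $\rho(\mathcal{B})\ge\limsup_{k\to\infty}\norm{B_0^k}^{1/k}=\rho(B_0)\ge 2$, the joint spectral radius is at least~$2$.

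For the upper bound I will pick the weight vector $w=(3,1,3,2)^\top$ and consider the weighted maximum norm $\norm{v}_w\coloneqq \max_{1\le i\le 4}\abs{v_i}/w_i$ on $\C^4$. Letting $\abs{B_r}$ denote the entrywise absolute value matrix, the induced operator norm satisfies $\norm{B_r}_w=\max_{1\le i\le 4}(\abs{B_r}w)_i/w_i$, and it suffices to check that $\abs{B_r}w\le 2w$ componentwise for $r\in\{0,1\}$. This is a finite calculation: each of the eight inequalities comes out to an identity such as $1\cdot 1+1\cdot 3=4=2\cdot 2$ (fourth row of $\abs{B_0}w$) or $1\cdot 1+1\cdot 3+1\cdot 2=6=2\cdot 3$ (third row of $\abs{B_1}w$), and I will display the four inequalities for each matrix explicitly. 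Consequently $\norm{B_0}_w=\norm{B_1}_w\le 2$, so for every finite product $\norm{B_{i_1}\cdots B_{i_k}}_w\le 2^k$, which gives $\rho(\mathcal{B})\le 2$ and, combined with the lower bound, establishes $\rho(\mathcal{B})=2$.

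The simple growth property is then immediate: the same product bound $\norm{B_{i_1}\cdots B_{i_k}}_w\le 2^k=\rho(\mathcal{B})^k$ says that $\mathcal{B}$ has the finiteness property with respect to $\norm{\,\cdot\,}_w$ already at $k=1$, and by Remark~\ref{remark:joint-spectral-radius-etc}(3) the finiteness property implies the simple growth property; since the latter is norm-independent (Remark~\ref{remark:joint-spectral-radius-etc}(2)), this completes the proof. The only nontrivial step is discovering the weight vector $w=(3,1,3,2)^\top$; I would motivate it briefly by noting that it is obtained by solving the linear system $\abs{B_r}w\le 2w$, $r\in\{0,1\}$, where the binding constraints turn out to come from the third and fourth row of $\abs{B_1}$ together with the fourth row of $\abs{B_0}$, forcing $w_3\ge w_2+w_4$ and $w_4\ge 2w_2$, etc.
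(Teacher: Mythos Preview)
Your proof is correct. The overall strategy is the same as the paper's---show the finiteness property with respect to a weighted $\ell^\infty$ norm, i.e., conjugate by a diagonal matrix and bound the row-sum norm---but your execution is slightly cleaner. The paper uses the diagonal weights $T=\diag(2,1/2,1,1)$; with those weights one gets $\norm{B_1}_T=5/2>2$, so the paper has to pass to all four products $B_{i}B_{j}$ and check that each has $\norm{\,\cdot\,}_T$-norm at most $4$, establishing the finiteness property at $k=2$. Your weight vector $w=(3,1,3,2)^\top$ already gives $\norm{B_0}_w=\norm{B_1}_w=2$, so the finiteness property holds at $k=1$ and no products need to be computed. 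Both approaches give the lower bound via $\rho(B_0)=2$; your eigenvector observation for $e_1$ is the same content as the paper's remark that $\norm{B_0^k}_T=2^k$.
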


\begin{proof}
  As usual, $\norm{\,\cdot\,}_\infty$ denotes the maximum norm of a vector and the
  row sum norm of a matrix.
  Let
  \begin{equation*}
    T\coloneqq\diag(2, 1/2, 1, 1)
  \end{equation*}
  and consider the vector norm $\norm{\,\cdot\,}_T$ defined by
  $\norm{v}_T\coloneqq \norm{T^{-1}v}_\infty$.
  This vector norm
  induces the matrix norm $\norm{\,\cdot\,}_T$ defined by $\norm{G}_T\coloneqq
  \norm{T^{-1}GT}_\infty$.

  We consider all products of two matrices in $\mathcal{B}$ and get
  \begin{align*}
    T^{-1}B_{0}^{2}T &=
    \begin{pmatrix}
      4 & 0 & 0 & 0 \\
      0 & 1 & 0 & 0 \\
      0 & 0 & 1 & 0 \\
      0 & 0 & 0 & 1
    \end{pmatrix},&
    T^{-1}B_{0}B_{1}T &=
    \begin{pmatrix}
      0 & 0 & 2 & 0 \\
      0 & 0 & 0 & 2 \\
      0 & 1 & 0 & 2 \\
      0 & -\frac{1}{2} & 1 & 0
    \end{pmatrix},\\
    T^{-1}B_{1}B_{0}T &=
    \begin{pmatrix}
      0 & \frac{1}{2} & 0 & 1 \\
      0 & -1 & 2 & 0 \\
      0 & -\frac{1}{2} & 1 & 1 \\
      0 & \frac{1}{2} & 1 & 0
    \end{pmatrix},&
    T^{-1}B_{1}^{2}T &=
    \begin{pmatrix}
      0 & -\frac{1}{2} & 1 & 1 \\
      0 & 2 & 0 & 2 \\
      0 & \frac{1}{2} & 1 & 1 \\
      0 & 1 & 0 & 3
    \end{pmatrix}.
  \end{align*}
  We observe that all these matrices have row sum norm at most $4$,
  which implies that
  \[\max\{\norm{G_1G_2}_T\mid G_1, G_2\in \mathcal{B}\}=4\]
  and therefore $\rho(\mathcal{B})\leq2$.
  Furthermore, we observe that $\norm{B_0^k}_T=2^k$ holds for all even
  positive integers~$k$. We conclude that $\rho(\mathcal{B})=2$ and that
  $\mathcal{B}$ has the finiteness property and thus the simple growth property.
\end{proof}

\begin{remark}\label{remark:unbordered-factors-finiteness-property}
  As announced in Remark~\ref{remark:joint-spectral-radius-etc}, the finiteness
  property may depend on the chosen norm. Indeed,
  the set
  $\mathcal{B}$ in Lemma~\ref{lem:jsr-ub} has the finiteness property with
  respect to the norm considered in the proof of Lemma~\ref{lem:jsr-ub}, but
  does not have the finiteness property
  with respect to the row sum norm: By computing the eigendecomposition of $B_1$
  or by induction on $k$, we obtain that
  the last row of $B_1^{k}$ is
  \[
    \Bigl(
      0,\ \frac{2}{3}(2^k-(-1)^k),\ 0,\ \frac13(2\cdot 2^k+(-1)^k)
    \Bigr)
  \]
  for $k\ge 0$. In particular,
  \[
    \norm{B_1^k}_\infty\ge \frac{4}{3}2^k - \frac{1}{3}(-1)^k > 2^k
  \]
  holds for all $k\ge 1$.
\end{remark}

\subsection{Asymptotics}

Let $\eta \geq 1$ be an integer. We define the Dirichlet series
\begin{equation*}
  \mathcal{F}_{j}(s) \coloneqq \sum_{n\geq \eta}\frac{f(4n + j)}{(4n + j)^{s}}
\end{equation*}
for $0\leq j\leq 3$ and set
\begin{equation*}
  \mathcal{F}(s) \coloneqq
  \begin{pmatrix}
    \mathcal{F}_{0}(s)\\
    \mathcal{F}_{1}(s)\\
    \mathcal{F}_{2}(s)\\
    \mathcal{F}_{3}(s)
  \end{pmatrix}.
\end{equation*}

In the following theorem, we state the main result of this section: We
give an asymptotic formula for $F(N) \coloneqq \sum_{0\leq n < N}f(n)$.

\begin{theorem}[Asymptotics for the Number of Unbordered Factors]
  \label{thm:ub-asymp}
  The summatory function~$F$ of the number~$f(n)$ of unbordered
  factors of length~$n$ in the Thue--Morse sequence satisfies
  \begin{equation*}
    F(N) = \smashoperator{\sum_{0\leq n < N}} f(n) = N^{\kappa}\cdot\Phi_{F}(\fractional{\log_{2}N}) + O(N\log N)
  \end{equation*}
  as $N \to \infty$, where $\kappa = \log_{2}(1+\sqrt{3}) = 1.44998431347650\ldots$ and~$\Phi_{F}$ is a
  $1$-periodic continuous function which is Hölder continuous
  with any exponent smaller than $\kappa - 1$. The Fourier
  coefficients of~$\Phi_{F}$ can be computed efficiently.

  Furthermore, the Dirichlet series $\mathcal{F}$ satisfies the
  functional equation
  \begin{equation}
    \label{ub-functional-equation}
    \bigl(I - 2^{-s}(B_{0} + B_{1})\bigr)\mathcal{F}(s) =
    \begin{pmatrix}
      \mathcal{G}_{0}(s)\\
      \mathcal{G}_{1}(s)\\
      \mathcal{G}_{2}(s)\\
      \mathcal{G}_{3}(s)
    \end{pmatrix}
  \end{equation}
  for $\Re s > 1$, where $B_{0}$ and $B_{1}$ are the matrices
  given in~\eqref{eq:ub-system-matrices}, and
  \begin{equation*}
    \mathcal{G}_{j}(s) = 2^{-s}\sum_{k=0}^{3}\Biggl(\sum_{n\geq 1}
    \binom{-s}{n}\biggl(c_{j,k}\Bigl(\frac{j}{2} - k\Bigr)^{n}
    + c_{j+4,k}\Bigl(\frac{j+4}{2} - k\Bigr)^{n}\biggr)\mathcal{F}_{k}(s+n) \Biggr)
    + \ \smashoperator[l]{\sum_{\eta\leq n < 2\eta}}\frac{f(4n + j)}{(4n + j)^{s}},
  \end{equation*}
  where $(c_{j,k})_{0\le j<8, 0\le k\le 3}$ are the coefficients of the
  $2$-recursive sequence in Corollary~\ref{cor:ub-q-recursive}.
  Moreover, $\mathcal{G}_{j}(s)$ is analytic for $\Re s > 1$ and $0\leq j \leq 3$, and, in particular, $\mathcal{F}(s)$ is meromorphic for $\Re s > 1$ and
  can only have poles $s = \log_{2}(\sqrt{3}+1) + \frac{2\pi i\mu}{\log 2}$
  with $\mu\in\Z$.
\end{theorem}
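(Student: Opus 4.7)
The plan is to feed the linear representation of $f$ constructed in Section~\ref{sec:ub-lin-rep} into the general asymptotic machinery of Section~\ref{sec:asymp}, and to derive the functional equation via Proposition~\ref{prop:dirichlet}. I will split the argument into an asymptotic statement for $F(N)$ and a Dirichlet-series statement for $\mathcal{F}(s)$.

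For the asymptotic statement, I would proceed in three steps. First, I would transfer the conclusion of Lemma~\ref{lem:jsr-ub}---which gives $\rho(\mathcal{B})=2$ together with the simple growth property---to the matrices actually appearing in the final linear representation. Proposition~\ref{cor:jsr-special-case} (using $\rho(\mathcal{B})=2>0$) yields $\rho(\mathcal{A})=2>1$ and the simple growth property for $\mathcal{A}=\{A_{0},A_{1}\}$ defined in~\eqref{eq:ub-As}; a further application of Proposition~\ref{prop:jsr-A} (using $\rho(\mathcal{A})=2>1$) promotes these to $\widetilde{\mathcal{A}}=\{\widetilde{A}_{0},\widetilde{A}_{1}\}$ from~\eqref{eq:ub-Atilde}, so that we may take $R=\rho(\widetilde{\mathcal{A}})=2$. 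Second, I would determine the relevant part of $\sigma(\widetilde{C})$: Proposition~\ref{cor:sprectrum-C-Bs} gives $\sigma(C)=\sigma(B_{0}+B_{1})\cup\set{0}$, and Proposition~\ref{cor:n0-same-eigenvalues}, applicable since $n_{0}=3\geq 2$, gives $\sigma(\widetilde{C})=\sigma(C)\cup\set{0,1}$ with multiplicities preserved outside $\set{0,1}$. Since the four eigenvalues $-\sqrt{3}+1,\ 1,\ 2,\ \sqrt{3}+1$ of $B_{0}+B_{1}$ are simple, the only $\lambda\in\sigma(\widetilde{C})$ with $\abs{\lambda}>R=2$ is $\sqrt{3}+1$, also simple, and the only eigenvalue with $\abs{\lambda}=R$ is the simple eigenvalue $2$. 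Third, Theorem~\ref{thm:asymp} yields
\begin{equation*}
  F(N) = N^{\kappa}\,\Phi_{F}(\fractional{\log_{2}N}) + O\bigl(N^{\log_{2}R}(\log N)^{m_{\widetilde{C}}(2)}\bigr) = N^{\kappa}\,\Phi_{F}(\fractional{\log_{2}N}) + O(N\log N)
\end{equation*}
with $\kappa=\log_{2}(\sqrt{3}+1)$, the Hölder exponent bound $\kappa-1=\log_{2}(\abs{\sqrt{3}+1}/R)$, and the efficient computability of the Fourier coefficients of $\Phi_{F}$.

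For the functional equation, I would apply Proposition~\ref{prop:dirichlet} with $m=2$ and some integer $\eta\ge 3$ chosen so that the recurrences of Corollary~\ref{cor:ub-q-recursive} hold for all $n\geq\eta$; this circumvents the nonzero offset $n_{0}=3$ by restricting the Dirichlet series to sufficiently large indices. The growth bound $f(n)=O(n)$ from Theorem~\ref{prop:ub-growth-f} shows that any $\rho>2$ works in Proposition~\ref{prop:dirichlet}, and in fact that the right half-plane $\Re s>1$ is a domain of absolute convergence for each $\mathcal{F}_{j}$. Specializing~\eqref{eq:prop-functional-equation} and~\eqref{prop:dirichlet-error} to the coefficients $(c_{s,k})_{0\le s<8,\,0\le k\le 3}$ read off from Corollary~\ref{cor:ub-q-recursive} produces precisely~\eqref{ub-functional-equation} and the stated formula for $\mathcal{G}_{j}(s)$. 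Analyticity of the $\mathcal{G}_{j}$ for $\Re s>1$ and meromorphicity of $\mathcal{F}$ with poles confined to $2^{s}\in\sigma(B_{0}+B_{1})$ are immediate from Proposition~\ref{prop:dirichlet}.

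The main obstacle in the chain is already resolved by Lemma~\ref{lem:jsr-ub}, namely the determination of $\rho(\mathcal{B})$ by the carefully chosen diagonally rescaled norm; once $\rho(\mathcal{B})=2$ and simple growth are in hand, every remaining step is a straight application of the framework. A minor technical point, worth flagging but easily dispatched, is the interplay between the offset $n_{0}=3$ of the recursion and the hypotheses of Proposition~\ref{prop:dirichlet} (which are phrased for offset zero); selecting $\eta\ge 3$ in the Dirichlet series resolves this since only large $n$ enter the recurrence manipulation.
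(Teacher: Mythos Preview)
Your proposal is correct and follows essentially the same route as the paper's proof: the chain Lemma~\ref{lem:jsr-ub} $\to$ Proposition~\ref{cor:jsr-special-case} $\to$ Proposition~\ref{prop:jsr-A} for the joint spectral radius and simple growth, Proposition~\ref{cor:sprectrum-C-Bs} $\to$ Proposition~\ref{cor:n0-same-eigenvalues} for the eigenvalues, then Theorem~\ref{thm:asymp} for the asymptotics and Proposition~\ref{prop:dirichlet} for the functional equation. Your explicit remark that one should take $\eta\ge 3$ to accommodate the offset, and that in the half-plane $\Re s>1$ only the eigenvalue $\sqrt{3}+1$ of $B_0+B_1$ can produce poles, makes the argument slightly more explicit than the paper's, but the substance is identical.
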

Table~\ref{tab:ub-fourier} and Figure~\ref{fig:ub-fluc} show the first few Fourier
coefficients and a plot of the periodic
fluctuation of Theorem~\ref{thm:ub-asymp}, respectively.

\begin{table}[htbp]
  \centering
  \begin{tabular}{r|l}
    \multicolumn{1}{c|}{$\mu$} & \multicolumn{1}{c}{$\varphi_{\mu}$}\\\hline
    $0$ & $\phantom{-}1.081200224751780$\\
    $1$ & $-0.0012296808157996 + 0.0157152473714320i$\\
    $2$ & $-0.0013742386970566 - 0.0110033266904103i$\\
    $3$ & $\phantom{-}0.0083338522036749 + 0.0034850861320328i$\\
    $4$ & $-0.0042230458157050 - 0.0017461310727764i$\\
    $5$ & $\phantom{-}0.0064055951023042 - 0.0018152583716649i$\\
    $6$ & $\phantom{-}0.0004060978191922 + 0.0003312870598610i$\\
    $7$ & $\phantom{-}0.0000421719576760 - 0.0039180258068148i$\\
    $8$ & $-0.0001216226961034 + 0.0017930566948364i$\\
    $9$ & $-0.0010960361908015 - 0.0012530549651823i$\\
    $10$ & $\phantom{-}0.0024882524350784 - 0.0009385940552233i$\\
  \end{tabular}
  \caption{First few Fourier coefficients of~$\Phi_{F}$}
  \label{tab:ub-fourier}
\end{table}

\begin{figure}[htbp]
  \centering
  \includegraphics{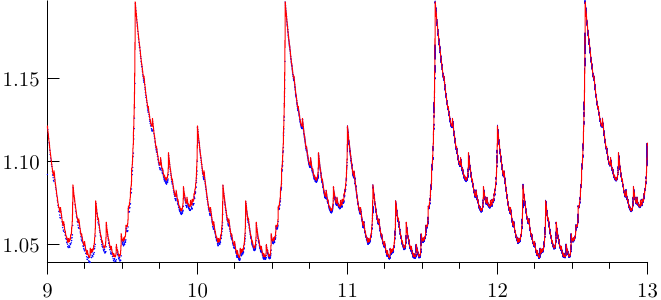}
  \caption{Fluctuation in the main term of the asymptotic expansion of
    the summatory function~$F$. The plot shows the periodic
    fluctuation~$\Phi_{F}(u)$ approximated by its Fourier series of
    degree~$2000$ (red) as well as the
    function~$F(2^{u})/2^{\kappa u}$ (blue).}
  \label{fig:ub-fluc}
\end{figure}

\begin{proof}[Proof of Theorem~\ref{thm:ub-asymp}]
  We use Theorem~\ref{thm:asymp} with the linear
  representation~$(\widetilde{A}_{0},\widetilde{A}_{1},\widetilde{v})$
  of~$f$ as obtained in Section~\ref{sec:ub-lin-rep}. For this
  purpose, we work out the parameters needed in the theorem.

  \emph{Joint Spectral Radius.} Due to
  Lemma~\ref{lem:jsr-ub},
  Proposition~\ref{cor:jsr-special-case} and
  Proposition~\ref{prop:jsr-A}, the
  joint spectral radius of~$\widetilde{A}_{0}$ and~$\widetilde{A}_{1}$
  is~$2$. Moreover, the simple growth property holds.
  So we set $R = 2$.

  \emph{Eigenvalues.} Let
  $\widetilde{C} = \widetilde{A}_{0} + \widetilde{A}_{1}$. We could of
  course directly calculate the eigenvalues
  of~$\widetilde{C}$. However, we want to emphasize the relation to
  the matrices~$B_{0}$ and~$B_{1}$ as given in~\eqref{eq:ub-system-matrices} and thus, to the recurrence
  relations given by the property that the sequence~$f$ is
  $2$-recursive.  Let~$A_{0}$ and~$A_{1}$ be the matrices in~\eqref{eq:ub-As}
  and set $C \coloneqq A_{0} + A_{1}$. Then applying
  Proposition~\ref{cor:sprectrum-C-Bs} yields
  \begin{equation*}
    \sigma(C) = \sigma(B_{0} + B_{1})\cup\set{0}
  \end{equation*}
  as well as $m_{C}(\lambda) = m_{B_{0} + B_{1}}(\lambda)$ for
  $\lambda\in\sigma(B_{0}+B_{1})$, and we recall that
  \begin{equation*}
    \sigma(B_{0}+B_{1}) = \set[\big]{-\sqrt{3} + 1, 1,2,\sqrt{3}+1};
  \end{equation*}
  see Section~\ref{sec:ub-lin-rep}. Moreover, all eigenvalues of
  $B_{0} + B_{1}$ are simple and thus, we have $m_{C}(\lambda) = 1$
  for all $\lambda\in\sigma(B_{0}+B_{1})$.

  Due to Proposition~\ref{cor:n0-same-eigenvalues}, we have
  \begin{equation*}
    \sigma(\widetilde{C}) = \sigma(C) \cup\set{0,1}
  \end{equation*}
  and $m_{\widetilde{C}}(\lambda) = m_{C}(\lambda)$ for $\lambda\in\sigma(B_{0}+B_{1})\setminus\set{1}$.
  All in all, this means for the spectrum of $\widetilde{C}$ that
  \begin{equation*}
    \sigma(\widetilde{C}) = \sigma(C) \cup\set{0,1} = \sigma(B_{0} + B_{1})\cup\set{0,1} = \set{-\sqrt{3} +1, 0, 1, 2, \sqrt{3}+1}.
  \end{equation*}
  So the only relevant eigenvalues of $\widetilde{C}$, i.e., the only eigenvalues~$\lambda$ with
  $\abs{\lambda} \geq R$, are~$\sqrt{3}+1$ with
  $m_{\widetilde{C}}(\sqrt{3}+1) = 1$ and $2$ with $m_{\widetilde{C}}(2) = 1$.

  Now applying Theorem~\ref{thm:asymp} yields the result for the
  asymptotic expansion. Finally, Proposition~\ref{prop:dirichlet}
  implies the correctness of the functional equation as stated
  in~\eqref{ub-functional-equation}.
\end{proof}




\section{Proofs}
\label{chap:proofs}

\subsection{Proofs of the Reductions to $q$-Regular Sequences in the General Case}
\label{sec:proof-main-result}

For the proof of Theorem~\ref{thm:remark-2.1-general}, we need the following lemma.

\begin{lemma}
  \label{lem:bounds}
  Let $q\geq 2$, $M > m \geq 0$ and $\ell\leq u$ be integers,
  and let $\ell'$ and $u'$ be as defined in~\eqref{eq:ell-and-u}.
  Furthermore, let~$d$ be an integer with
  $\ell'\leq d\leq q^{M-1}-q^{m}+u'$ and write $d = d'q^{M} + r'$ with
  $0\leq r' < q^{M}$ and $d'\in\Z$. Then we have
  \begin{equation}
    \label{eq:lemma-bounds-case-1}
    \ell' \leq q^{m}d' + \ell \qq{and} q^{m}d' + u \leq u'.
  \end{equation}
  If additionally $r' \geq q^{M-1}$ holds, then
  \begin{equation}
    \label{eq:lemma-bounds-case-2}
    q^{m}d' + q^{m} + u \leq u'.
  \end{equation}
\end{lemma}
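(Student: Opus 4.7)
The plan is to verify the three inequalities by case analysis on the signs of $\ell$ and $u$, working throughout with $k = q^{M-m}$ and $d' = \lfloor d/q^M \rfloor$. The trivial cases come first: if $\ell \geq 0$, then $\ell' = 0$ and $d \geq 0$ gives $d' \geq 0$, so $q^m d' + \ell \geq \ell'$. If $u \leq 0$, then $u' = q^m - 1$, the bound $d \leq q^{M-1} - 1 < q^M$ forces $d' \leq 0$, and $q^m d' + u \leq u'$; under the extra hypothesis $r' \geq q^{M-1}$, the chain $d' q^M + q^{M-1} \leq d \leq q^{M-1} - 1$ further forces $d' \leq -1$ and thus $q^m(d' + 1) + u \leq u'$.

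For the nontrivial part of the first inequality, where $\ell < 0$, the defining inequality $\ell'(k-1) \leq k(\ell + 1 - q^m)$ rearranges to $\ell'/k - q^m \geq \ell' - \ell - 1$. Combined with the strict real bound $\lfloor \ell'/q^M \rfloor > \ell'/q^M - 1$ (multiplied by $q^m$), this gives $q^m \lfloor \ell'/q^M \rfloor + \ell > \ell' - 1$, and since both sides are integers this upgrades to $q^m \lfloor \ell'/q^M \rfloor + \ell \geq \ell'$, which is the first inequality at the worst case $d = \ell'$ (the general case then follows by monotonicity in $d$).

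For $u > 0$, set $c = \lceil uk/(k-1) \rceil$ so that $u' = q^m - 1 + c$. For the third inequality, the hypothesis $r' \geq q^{M-1}$ combined with $d \leq q^{M-1} - q^m + u'$ yields $d' q^M \leq u' - q^m$; writing $u' - q^m = a q^M + b$ with $0 \leq b < q^M$, the desired $q^m(d' + 1) + u \leq u'$ reduces to $a q^m \leq c - u - 1$. The real bound $a q^m \leq (c-1)/k$ (from $a q^M \leq c - 1$) combined with $(c - 1)/k < c - u$ (a consequence of $c(k-1) \geq uk$) yields $a q^m < c - u$, and integrality then gives the target. The second inequality is handled by an analogous argument with $E = q^{M-1} - q^m + u'$ in place of $u' - q^m$; for $m \geq 1$ the same real-plus-integrality argument closes using $q^{M-1}(q-1) \geq k - 1$, and the edge case $m = 0$ is checked directly by inspecting $E$ modulo $q^M$.

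The main obstacle is that purely real-valued estimates fall just short: the naive inequality $(u' - q^m)/q^M \leq (u' - u - q^m)/q^m$ does not hold in general. The arguments only close by exploiting the integrality of $d'$ and $a$ together with the strict bound $\lfloor x \rfloor > x - 1$ and the slack $c(k-1) \geq uk$ provided by the ceiling in the definition of $u'$.
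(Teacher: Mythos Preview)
Your proof is correct and follows essentially the same strategy as the paper's: case analysis on the signs of $\ell$ and $u$, combined with the defining inequalities of floor and ceiling. The difference is purely in execution. The paper appeals to the nested-floor identity $\lfloor(\lfloor x\rfloor + a)/b\rfloor = \lfloor (x+a)/b\rfloor$ (Graham--Knuth--Patashnik) and runs a single chain of inequalities starting from $q^{m}d' + \ell = (d - r')/q^{M-m} + \ell$; you instead isolate the extremal value of $d$ (namely $d=\ell'$, respectively $d=E$), prove the bound there via a real estimate sharpened by integrality, and then invoke monotonicity of $d'=\lfloor d/q^{M}\rfloor$ in $d$. Your route avoids the nested-floor identity entirely, at the cost of slightly more bookkeeping; in particular, the separate treatment of $m=0$ for the second inequality is not actually needed---the bound $(q^{M-1}-1+c)/k < q^{m}+c-u$ already follows from $c(k-1)\ge uk$ together with $q^{M-1}(q-1)+1>0$ for all $m\ge 0$, so the split can be dropped.
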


\begin{proof}
  First of all, we note that the equalities
  \begin{equation}
    \label{eq:graham-floor}
    \floor[\Big]{\frac{\floor{x} + a}{b}} = \floor[\Big]{\frac{x + a}{b}}
    \qq{and}
    \ceil[\Big]{\frac{\ceil{x} + a}{b}} = \ceil[\Big]{\frac{x + a}{b}}
  \end{equation}
  hold for all $x\in\R$ and $a$, $b\in\Z$ with $b > 0$; see Graham, Knuth and Patashnik~\cite[p.~72]{Graham-Knuth-Patashnik:1994}.

  We now show that the left inequality of~\eqref{eq:lemma-bounds-case-1} is true. If
  $\ell \ge 0$, then $\ell' = 0$ and the inequality follows. If $\ell < 0$,
  then we have
  \begin{align*}
    q^{m}d' + \ell
    &= \frac{d - r'}{q^{M-m}} + \ell\\
    &> \frac{\ell' - q^{M} + \ell q^{M-m}}{q^{M-m}}\\
    &= \frac{ \floor[\big]{\frac{(\ell+1) q^{M-m} - q^{M}}{q^{M-m}-1}} - q^{M} + \ell q^{M-m}}{q^{M-m}}\\
    &\geq \floor[\Bigg]{ \frac{\floor[\big]{\frac{(\ell+1) q^{M-m} - q^{M}}{q^{M-m}-1}} - q^{M} + \ell q^{M-m}}{q^{M-m}}}\\
    &\downtoeq{\eqref{eq:graham-floor}} \floor[\Bigg]{\frac{\frac{(\ell+1) q^{M-m} - q^{M}}{q^{M-m}-1} - q^{M} + (\ell+1) q^{M-m}}{q^{M-m}}-1}\\
    &= \floor[\bigg]{\frac{(\ell + 1)q^{M-m} - q^{M}}{q^{M-m} - 1}} - 1 = \ell' - 1,
  \end{align*}
  which is equivalent to $\ell' \leq q^{m}d' + \ell$.

  For the remaining two inequalities, we distinguish between negative and
  non-negative upper bounds~$u$.

  \begin{description}
  \item[Case 1:] $u \leq -1$. This implies $u' = q^{m} - 1$ and as a
    consequence, we have $d \leq q^{M-1} - 1$ as well as $d' \leq 0$. The right
    inequality of~\eqref{eq:lemma-bounds-case-1} as well as~\eqref{eq:lemma-bounds-case-2} follow directly.

  \item[Case 2:] $u \geq 0$. This implies $u' = q^{m} - 1 + \ceil[\big]{\frac{uq^{M-m}}{q^{M-m}-1}}$, and we have
  \begin{align*}
    q^{m}d' + u
    &= \frac{d - r'}{q^{M-m}} + u \leq \frac{u' + q^{M-1} - q^{m} + uq^{M-m}}{q^{M-m}}\\
    &< \frac{u' + q^{M} - q^{m} + uq^{M-m}}{q^{M-m}} & \refstepcounter{equation}\tag{\theequation}\label{eq:stern}\\
    &= \frac{q^{m} - 1 + \ceil[\big]{\frac{uq^{M-m}}{q^{M-m}-1}} + q^{M} - q^{m} + uq^{M-m}}{q^{M-m}}\\
    &< q^{m} + \frac{\ceil[\big]{\frac{uq^{M-m}}{q^{M-m}-1}} + uq^{M-m}}{q^{M-m}}\\
    &\leq q^{m} + \ceil[\Bigg]{\frac{\ceil[\big]{\frac{uq^{M-m}}{q^{M-m}-1}} + uq^{M-m}}{q^{M-m}}}\\
    &\downtoeq{\eqref{eq:graham-floor}} q^{m} + \ceil[\Bigg]{\frac{\frac{uq^{M-m}}{q^{M-m}-1} + uq^{M-m}}{q^{M-m}}}\\
    &= q^{m} + \ceil[\bigg]{\frac{uq^{M-m}}{q^{M-m} - 1}} = u' + 1.
  \end{align*}
  This is equivalent to $q^{m}d' + u \leq u'$,
  and~\eqref{eq:lemma-bounds-case-1} is shown. Now let $r'\geq q^{M-1}$,
  then it follows that
  \begin{align*}
    q^{m}d' + q^{m} + u &= \frac{d - r'}{q^{M-m}} + q^{m} + u\\
    &\leq \frac{u' + q^{M - 1} - q^{m} - q^{M-1} + q^{M} + uq^{M-m}}{q^{M-m}}\\
    &= \frac{u' + q^{M} - q^{m} + uq^{M-m}}{q^{M-m}}.
  \end{align*}
  We can now use the same steps as above (from \eqref{eq:stern} on)
  and obtain $q^{m}d' + q^{m} + u \leq u'$, which
  shows that~\eqref{eq:lemma-bounds-case-2} holds. This completes the proof.\qedhere
\end{description}
\end{proof}

\begin{proof}[Proof of Theorem~\ref{thm:remark-2.1-general}]
  Let~$v$ be
  as stated in Theorem~\ref{thm:remark-2.1-general} and let $(c_{r,k})_{0\le
    r<q^M, \ell\le k\le u}$ be the coefficients of the $q$-recursive sequence~$x$. We have to show that there are
  matrices $A_{0}$, $\ldots$, $A_{q-1}$ such that
  $v(qn + r) = A_{r}v(n)$ holds for all $n\geq n_{1}$ and
  $0\leq r < q$, as required in the definition of regular sequences.
  This is equivalent to the property that
  each component of $v(qn + r)$ is a linear combination of the entries
  in~$v(n)$, where all sequences are restricted to~$n\geq n_{1}$.

  We fix $0\leq r < q$ and split the proof into three parts depending
  on the indices of the blocks~$v_{j}$ of~$v$ as introduced
  in~\eqref{eq:recursive-blocks-of-v}. In each part,
  the choice of the components of~$v$ as defined
  in~\eqref{eq:recursive-components-of-v-1} and~\eqref{eq:recursive-components-of-v-2}
  allows to represent $v_{j}(qn+r)$ as a linear combination of the entries
  of~$v$.

  Furthermore, we let $\ind_{v}(x\circ(n\mapsto q^{j}n+d))$ denote the
  absolute position of some sequence~$x\circ(n\mapsto q^{j}n+d)$
  in the vector~$v$, absolute in the sense that we disregard the block
  form of~$v$.\footnote{We assume that the operator $\ind_{v}$ is
    ``intelligent'' enough to recognize the
    sequence~$x\circ(n\mapsto q^{j}n+d)$ in~$v$ by the parameters~$j$ and~$d$, even if there are
    other sequences in~$v$ that are equal
    to~$x\circ(n\mapsto q^{j}n+d)$ (in the sense that all elements of the sequences are equal). For reasons of simplicity and
    readability, we refrain from develop this more explicitly at this
    point.} Moreover, we write~$v_{j,d}$ for the sequence
  $x\circ(n\mapsto q^{j}n+d)$ where this notation is convenient.

  \paragraph{Part 1.}

    At first, we consider blocks~$v_{j}$ of~$v$ with
    $0\leq j < m$.

    \emph{Components of~$v_{j}$:} Let $0\leq d < q^{j}$ and consider the component
    $v_{j,d} = x\circ(n\mapsto q^{j}n + d)$ of~$v$. Then we get
    \begin{align*}
      v_{j,d}(qn+r) &= x\bigl(q^{j}(qn + r) + d\bigr)\\
      &= x(q^{j+1}n + q^{j}r + d).
    \end{align*}

    We claim that $x\circ(n\mapsto q^{j+1}n+q^{j}r + d)$ is a component
    of $v_{j+1}$. To show this, we first observe that
    \begin{equation}\label{eq:proof-main-theorem-part-1}
      0\leq q^{j}r + d \leq q^{j}(q-1) + q^{j} - 1 = q^{j + 1} - 1.
    \end{equation}
    For $j \leq m-2$, this immediately proves the claim.
    If $j=m-1$, then the estimates $\ell'\le 0$ (see Remark~\ref{rem:main-thm}) and $q^{j+1}-1 = q^{m}-1 \leq u'$ together
    with~\eqref{eq:proof-main-theorem-part-1} ensure the validity of the claim.

    \emph{Corresponding Rows of~$A_{r}$:} The previous
    considerations imply that the row~$\ind_{v}(v_{j,d})$ of~$A_{r}$
    has zeros in every entry except one. Specifically, the entry in
    column~$\ind_{v}\bigl(v_{j+1, q^{j}r + d}\bigr)$ is~$1$.

  \paragraph{Part 2.}

    Secondly, we consider blocks~$v_{j}$ of~$v$ with
    $m\leq j \leq M-2$.

    \emph{Components of~$v_{j}$:} Let
    $\ell'\leq d \leq q^{j} - q^{m} + u'$. Then the sequence
    $x\circ(n\mapsto q^{j}n + d)$ is a component of~$v_{j}$ and it
    holds that
    \begin{equation*}
      x\bigl(q^{j}(qn+r) + d\bigr) = x(q^{j+1}n + q^{j}r + d)
    \end{equation*}
    with
    \begin{equation*}
      \ell'\leq d \leq q^{j}r + d \leq q^{j}(q - 1) + q^{j} - q^{m} + u'
      = q^{j+1} - q^{m} + u'.
    \end{equation*}
    This implies that the sequence $x\circ(n\mapsto q^{j}(qn + r) + d)$ is
    a component of~$v$, namely in the block~$v_{j+1}$.

    \emph{Corresponding Rows of~$A_{r}$:} Also in this part,
    the row~$\ind_{v}(v_{j,d})$ of~$A_{r}$ consists of zeros except for
    position~$\ind_{v}(v_{j+1,q^{j}r + d})$ where we have a~$1$.

  \paragraph{Part 3.}

    Finally, we have a look at the last block~$v_{M-1}$
    of~$v$.

    \emph{Components of~$v_{M-1}$:} Let
    $\ell' \leq d \leq q^{M-1} - q^{m} + u'$ and consider the
    component $v_{M-1,d} = x\circ(n\mapsto q^{M-1}n + d)$
    of~$v_{M-1}$. Write $d=d'q^{M} + r'$ with $0\leq r' \leq
    q^{M}-1$ and $d'\in\Z$.
    The component of $v(qn+r)$ which corresponds to~$v_{M-1,d}$
    is given by
    \begin{equation*}
      x\bigl(q^{M-1}(qn+r) + d\bigr) = x\bigl(q^{M}(n+d') + q^{M-1}r + r'\bigr) = x\bigl(q^{M}(n + d') + \tilde{r}\bigr)
    \end{equation*}
    with $\tilde{r} \coloneqq q^{M-1}r + r'$. Note that
    $\tilde{r}\le q^{M-1}(q-1)+q^{M}-1=2q^M-q^{M-1}-1$.
    We distinguish the
    following two cases with respect to the parameter~$r$ for determining
    the rows of~$A_r$ corresponding to the block~$v_{M-1}$.

    \begin{description}
    \item[Case 1:] $0\leq \tilde{r} < q^{M}$. We have
      \begin{equation}\label{eq:proof-of-thm-a-part-3-1}
        \begin{split}
        x\bigl(q^{M}(n + d') + \tilde{r}\bigr)
        &= \smashoperator{\sum_{\ell\leq k\leq u}}c_{\tilde{r}, k}\,x\bigl(q^{m}(n + d') + k\bigr)\\
        &= \smashoperator{\sum_{\ell\leq k\leq u}}c_{\tilde{r}, k}\,x\bigl(q^{m}n + q^{m}d' + k)
        \end{split}
      \end{equation}
      for $n + d'\geq n_{0}$ due to~\eqref{eq:def-q-recursive}.
      As $d'=\floor{d/q^{M}}\ge \floor{\ell'/q^{M}}$ holds, the condition $n + d'\geq n_{0}$ is fulfilled
      for $n\ge n_{1}$ by definition of $n_{1}$.
      We need to show that
      $\ell' \leq q^{m}d' + k \leq u'$ holds for all
      $\ell\leq k\leq u$. We apply Lemma~\ref{lem:bounds} and directly
      obtain both inequalities, i.e., $\ell' \leq q^{m}d' + k$ and $q^{m}d' + k \leq u'$,
      by~\eqref{eq:lemma-bounds-case-1}. This implies that the
      sequence $x\circ (n\mapsto q^{M-1}(qn + r) + d)$ is a linear
      combination of sequences in~$v$, where all sequences are restricted to $n\geq n_{1}$.

      \emph{Corresponding rows of~$A_{r}$:} We have a look at
      row~$\ind_{v}(v_{M-1,d})$ of~$A_{r}$. The first couple of entries are
      zero, up to position~$\ind_{v}(v_{m,q^{m}d'+\ell}) - 1$. On
      position~$\ind_{v}(v_{m,q^{m}d'+\ell})$ up to
      position~$\ind_{v}(v_{m,q^{m}d'+u})$ we have the entries
      $c_{\tilde{r},\ell}$, \dots, $c_{\tilde{r},u}$, followed
      by zeros up to the end of the row.

    \item[Case 2:] $q^{M} \leq \tilde{r} < 2q^{M} - q^{M-1}
      -1$. This implies $r' \geq q^{M-1}$ and $0\leq \tilde{r} - q^M < q^M$.
      We obtain
      \begin{equation}\label{eq:proof-of-thm-a-part-3-2}
        \begin{split}
        x\bigl(q^{M}(n + d') + \tilde{r}\bigr) &=
        x\bigl(q^{M}(n + d' + 1) + \tilde{r} - q^{M}\bigr)\\
        &= \smashoperator{\sum_{\ell\leq k\leq u}}c_{\tilde{r}-q^{M}, k}\,x\bigl(q^{m}(n + d' + 1) + k\bigr)\\
        &= \smashoperator{\sum_{\ell\leq k\leq u}}c_{\tilde{r}-q^{M}, k}\,x\bigl(q^{m}n + q^{m}d' + q^{m} + k)
        \end{split}
      \end{equation}
      for $n \geq n_{1} - 1$, again due to~\eqref{eq:def-q-recursive}. Thus, we need to argue
      that $\ell'\leq q^{m}d' + q^{m} + k \leq u'$ holds for all
      $\ell\leq k\leq u$. For this purpose, we again use
      Lemma~\ref{lem:bounds}. The inequality
      $q^{m}d' + q^{m} + k \leq u'$ directly follows
      from~\eqref{eq:lemma-bounds-case-2}. For the lower bound we
      obtain
      \begin{equation*}
        q^{m}d' + q^{m} + k \geq q^{m}d' + q^{m} + \ell > q^{m}d' + \ell
        \downtosymb{\eqref{eq:lemma-bounds-case-1}}{\geq} \ell'.
      \end{equation*}
      This shows that also in this case, the sequence
      $x\circ (n\mapsto q^{M-1}(qn + r) + d)$ is a linear combination
      of sequences in~$v$, where again all sequences are restricted to~$n\geq n_{1}$.

      \emph{Corresponding rows of~$A_{r}$:} We again have a
      look at row~$\ind_{v}(v_{M-1,d})$ of~$A_{r}$. The first couple of entries
      are zero, up to
      position~$\ind_{v}(v_{m,q^{m}d'+q^{m}+\ell}) - 1$. On
      position~$\ind_{v}(v_{m,q^{m}d'+q^{m}+\ell})$ up to
      position~$\ind_{v}(v_{m,q^{m}d'+q^{m}+u})$ we have the
      entries~$c_{\tilde{r}-q^M,\ell}$, \dots,
      $c_{\tilde{r}-q^M,u}$, followed by zeros up to the end of the
      row.
    \end{description}

  \paragraph{Finalizing the Proof.}

  At this point, we have completed constructing matrices $A_r$ for $0\leq r < q$ row by row
  and have shown that $(A_0, \ldots, A_{r-1}, v)$ is a linear representation of $x$.
  We now gather the information on $A_r$ which was discussed in the proof. We let $D$ denote the dimension of $A_r$. If $a_{i}$ is the $i$th row of~$A_{r}$,
  define $j\in\N_0$ and $d\in\Z$ by $i=\ind_{v}(v_{j, d})$. Then we have
  \begin{equation}
    \label{eq:Ar-explicitly}
    a_{i}^{\top} =
    \begin{cases}
      \bigl(\iverson[\big]{k = \ind_{v}(v_{j+1,q^{j}r+d})}\bigr)_{1\leq k \leq
          D} & \text{if } 0\leq j \leq M-2,\\
      \bigl(c_{\tilde{r},k + k_{1}}\cdot\iverson{\ell \leq k + k_{1} \leq u}\bigr)_{1\leq k \leq D} & \text{if } j = M-1 \text{ and } \tilde{r} < q^{M},\\
            \bigl(c_{\tilde{r}-q^M,k + k_{2}}\cdot\iverson{\ell \leq k + k_{2} \leq u}\bigr)_{1\leq k \leq D} & \text{if } j = M-1 \text{ and } \tilde{r} \geq q^{M},
    \end{cases}
  \end{equation}
  where
  \begin{itemize}
  \item $\tilde{r} \coloneqq q^{M-1}r + r'$
    with $d = d'q^{M} + r'$, $d'\in\Z$ and $0\leq r' < q^{M}$,
  \item $k_{1} \coloneqq \ell -\ind_{v}(v_{m,q^{m}d'+\ell})$ and
  \item $k_{2} \coloneqq \ell -\ind_{v}(v_{m,q^{m}d'+q^{m}+\ell})$.
  \end{itemize}
  Note that all these parameters depend on~$i$.
\end{proof}

\begin{proof}[Proof of Theorem~\ref{thm:offset-correction}]
  The definitions of $\delta_{k}$ and $w_{r,k}$ in Theorem~\ref{thm:offset-correction} imply that
  \begin{equation*}
    v(qn + r) = A_{r}v(n) + \smashoperator{\sum_{0\leq k < n_{0}}}w_{r,k}\,\delta_{k}(n)
  \end{equation*}
  holds for all $0\leq r < q$ and $n \geq 0$. We further have
  \begin{equation*}
    \delta_{k}(qn + r) = \iverson{k \geq r \text{ and } k\equiv r \tpmod{q}}
    \cdot \delta_{(k-r)/q}(n)
  \end{equation*}
  for all $0\leq k < n_{0}$, $0\leq r < q$ and $n\geq 0$. With $\widetilde{v}$
  and $\widetilde{A}_{r}$ as defined in~\eqref{eq:offset-v}
  and~\eqref{eq:matrices-A-tilde}, it follows that
  $\widetilde{v}(qn + r) = \widetilde{A}_{r}\widetilde{v}(n)$ holds
  for all $0\leq r < q$ and $n\ge 0$. Consequently,
  $(\widetilde{A}_{0}, \ldots, \widetilde{A}_{q-1},\widetilde{v})$ is a
  $q$-linear representation of~$x$ and thus,~$x$ is $q$-regular. The statement
  about the shape of $J_r$ follows from the definition of $J_r$. This
  completes the proof of the theorem.
\end{proof}

Corollary~\ref{corollary:main:is-q-regular} directly follows by
combining Theorem~\ref{thm:remark-2.1-general} and
Theorem~\ref{thm:offset-correction}.

\begin{proof}[Proof of Corollary~\ref{cor:inhomogeneities}]
  First of all, note that for a $q$-regular sequence~$g$, its
  shifted version $g\circ(n\mapsto n + d)$ is $q$-regular
  for all integers $d\in\Z$; see Allouche and
  Shallit~\cite[Theorem~2.6 and the subsequent
  remark]{Allouche-Shallit:1992:regular-sequences}.

  We use the notation introduced in
  Theorem~\ref{thm:remark-2.1-general}. In order to prove the
  corollary, we construct a $q$-linear representation with
  offset~$n_{1}$ of~$x$ along the lines of the proof of
  Theorem~\ref{thm:remark-2.1-general}: Parts~1 and~2 of the proof can
  be adopted unchanged, but we have to carefully adapt Part~3:
  In~\eqref{eq:proof-of-thm-a-part-3-1}
  and~\eqref{eq:proof-of-thm-a-part-3-2}, we have to
  apply~\eqref{eq:recursions-with-inhomogeneities} instead
  of~\eqref{eq:def-q-recursive} and obtain
  \begin{equation}\label{eq:proof-corr-d-1}
    x\bigl(q^{M}(n + d') + \tilde{r}\bigr)
    = \smashoperator{\sum_{\ell\leq k\leq u}}c_{\tilde{r}, k}\,x\bigl(q^{m}n + q^{m}d' + k) + g_{\tilde{r}}(n+d')
  \end{equation}
  in the first case and
  \begin{equation}\label{eq:proof-corr-d-2}
    x\bigl(q^{M}(n + d' + 1) + \tilde{r} - q^{M}\bigr) =
    \smashoperator{\sum_{\ell\leq k\leq u}}c_{\tilde{r}-q^{M}, k}\,x\bigl(q^{m}n + q^{m}d' + q^{m} + k) +
    g_{\tilde{r}-q^{M}}(n+d'+1)
  \end{equation}
  in the second case, where $n\geq n_{1}$ and
  $\floor{\ell'/q^{M}}\leq d' \leq \floor{(q^{M-1} - q^{m} +
    u')/q^{M}}$. Thus, if we append the vectors of the $q$-linear
  representations of the $q$-regular sequences
  $g_{s}\circ(n\mapsto n + d'')$ for $0\leq s < q^{M}$ and
  $\floor{\ell'/q^{M}}\leq d'' \leq \floor{(q^{M-1} - q^{m} + u')/q^{M}}
  + 1$ to the vector~$v$ as given by~\eqref{eq:recursive-blocks-of-v}
  and analogously join the corresponding matrices together---with some
  additional entries~$1$ caused by $g_{\tilde{r}}(n+d')$ and
  $g_{\tilde{r}-q^{M}}(n+d'+1)$ in~\eqref{eq:proof-corr-d-1}
  and~\eqref{eq:proof-corr-d-2}, respectively---then we obtain a
  $q$-linear representation with offset~$n_{1}$ of~$x$.

  Finally, we correct the offset of the $q$-linear representation by
  applying Theorem~\ref{thm:offset-correction}. Consequently, $x$ is
  $q$-regular.
\end{proof}

\subsection{Proof of the Reduction to $q$-Regular Sequences in the Special Case}

\begin{proof}[Proof of Theorem~\ref{prop:recursive-special-case}]
  We split the proof into two parts depending on the indices of the
  blocks of~$v$.

  \paragraph{Part 1.}
    At first, we consider blocks $v_{j}$ with
    $0\leq j<m$. These blocks coincide with~\eqref{eq:recursive-components-of-v-1} in
    Theorem~\ref{thm:remark-2.1-general}.
    Moreover, these blocks correspond to the first
    $\sum_{0\leq j <m}q^{j} = (q^{m} - 1)/(q-1)$ rows of the matrices~$A_{0}$, \dots, $A_{q-1}$ and thus, also to the rows of~$J_{r0}$ and~$J_{r1}$ for $0\leq r < q$. By the
    proof of Theorem~\ref{thm:remark-2.1-general}, each of these rows
    consists of zeros and exactly one~$1$, which in particular means
    that the only entries of $J_{r0}$ and $J_{r1}$
    are zeros and ones as stated in the theorem. It remains to
    show that the matrices~$J_{r0}$ are upper triangular with zeros on
    the diagonal. This follows from the fact that
    \begin{equation*}
      \ind_{v}\bigl(x\circ(n\mapsto q^{j}(qn + r) + d)\bigr) > \ind_{v}\bigl(x\circ(n\mapsto q^{j}n + d)\bigr)
    \end{equation*}
    holds for all $0\leq j < m$, $0\leq r < q$ and $0\leq d < q^{j}$.
  \paragraph{Part 2.}
    We take a look at the block~$v_{m}$. The definition of~$B_{r}$ in~\eqref{eq:prop-recursive-Bs} implies that
    \begin{equation*}
      v_{m}(qn + r) = B_{r}v_{m}(n)
    \end{equation*}
    holds for all $0\leq r < q$, which, together with Part~1, directly
    implies that  $(A_{0}, \dots, A_{q-1}, v)$ as given in
    \eqref{eq:prop-recursive-v} and~\eqref{eq:prop-recursive-matrices} is indeed a linear representation of~$x$.
\end{proof}

\subsection{Proofs of the Spectral Results}

We start with the proof of Proposition~\ref{cor:n0-same-eigenvalues}.

\begin{proof}[Proof of Proposition~\ref{cor:n0-same-eigenvalues}]
  Due to~\eqref{eq:matrices-A-tilde}, we have
  \begin{equation*}
    \widetilde{C} =
    \begin{pmatrix}
      C & W\\
      0 & J
    \end{pmatrix}
  \end{equation*}
  with the $n_{0}\times n_{0}$ matrix $J = \sum_{0\leq r < q}J_{r}$ and
  $W = \sum_{0\leq r < q}W_{r}$, and thus,
  $\sigma(\widetilde{C}) = \sigma(C)\cup\sigma(J)$. By the properties of $J_r$
  noted in Theorem~\ref{thm:offset-correction}, $J$ is a lower triangular matrix with
  $\diag(J) = (1,0,\dots,0)$. This yields
  $\sigma(J)\subseteq\set{0,1}$. Furthermore, we can say the
  following. If $n_{0} = 1$, then $J=(1)$ and thus,
  $\sigma(J) = \set{1}$.  If $n_{0} \geq 2$, then
  $\diag(J) = (1,0,\dots,0)$ holds with $n_{0}- 1 \geq 1$ zeros which
  implies $\sigma(J) = \set{0,1}$. This yields the result as stated.
\end{proof}

Before proving
Lemma~\ref{lemma:simple-growth-property-block-triangular-matrices},
we need another lemma.

\begin{lemma}\label{lemma:growth-block-triangular-matrices}
  Let $\calG$ be a finite set of $(D_1+D_2)\times (D_1+D_2)$ block upper
  triangular matrices. For $G\in\calG$ write
  \begin{equation*}
    G =
    \begin{pmatrix}
      G^{(11)}& G^{(12)}\\0&G^{(22)}
    \end{pmatrix}
  \end{equation*}
  where the block~$G^{(ij)}$ is a $D_i\times D_j$ matrix for $1\le i\le j\le 2$.

  Let $R_1$ and $R_2$ be non-negative constants such that
  \begin{equation*}
    \norm{G^{(ii)}_1\ldots G^{(ii)}_k}=O(R_i^k)
  \end{equation*}
  holds for all $G_1$, \ldots, $G_k\in\calG$, $i\in\{1, 2\}$ and
  $k\to\infty$.

  Assume that $R_1\neq R_2$ and set $R\coloneqq \max\{R_1, R_2\}$. Then
  \begin{equation*}
    \norm{G_1\ldots G_k}=O(R^k)
  \end{equation*}
  holds for all $G_1$, \ldots, $G_k\in\calG$ and
  $k\to\infty$.
\end{lemma}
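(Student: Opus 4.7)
My plan is to exploit the block upper triangular structure directly: a product of block upper triangular matrices is again block upper triangular, and its blocks can be computed explicitly. Writing $P_k \coloneqq G_1 \cdots G_k$, the diagonal blocks satisfy $P_k^{(ii)} = G_1^{(ii)} \cdots G_k^{(ii)}$ for $i \in \{1,2\}$, while a straightforward induction gives
\begin{equation*}
  P_k^{(12)} = \sum_{j=1}^{k} G_1^{(11)} \cdots G_{j-1}^{(11)} \, G_j^{(12)} \, G_{j+1}^{(22)} \cdots G_k^{(22)}.
\end{equation*}
The diagonal blocks are immediately bounded by $O(R_i^k) = O(R^k)$ using the hypothesis.

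The core of the argument will be bounding the off-diagonal block. Applying the submultiplicativity of the induced matrix norm and the triangle inequality yields
\begin{equation*}
  \norm{P_k^{(12)}} \le \sum_{j=1}^{k} \norm[\big]{G_1^{(11)} \cdots G_{j-1}^{(11)}} \cdot \norm[\big]{G_j^{(12)}} \cdot \norm[\big]{G_{j+1}^{(22)} \cdots G_k^{(22)}}.
\end{equation*}
Since $\calG$ is finite, the norms $\norm{G^{(12)}}$ are uniformly bounded by some constant $M$, and by the hypothesis we have $\norm{G_1^{(11)} \cdots G_{j-1}^{(11)}} \le C_1 R_1^{j-1}$ and $\norm{G_{j+1}^{(22)} \cdots G_k^{(22)}} \le C_2 R_2^{k-j}$ with constants $C_1$, $C_2$ independent of the particular choice of matrices. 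Thus
\begin{equation*}
  \norm{P_k^{(12)}} \le C_1 C_2 M \sum_{j=1}^{k} R_1^{j-1} R_2^{k-j}.
\end{equation*}

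The main obstacle --- and the reason the hypothesis $R_1 \neq R_2$ is essential --- lies in evaluating this geometric-like sum. Setting $R = \max\{R_1, R_2\}$, I would factor out $R^{k-1}$ and observe that the remaining sum is a geometric series with ratio $\min\{R_1, R_2\}/R < 1$, hence bounded by $1/(1 - \min\{R_1,R_2\}/R)$ independently of $k$. This yields $\norm{P_k^{(12)}} = O(R^k)$. (If instead $R_1 = R_2$, the sum would evaluate to $k R^{k-1}$, producing an extra logarithmic factor in the exponent and breaking the simple growth property, which explains the hypothesis.) Combining the estimates for all three blocks gives $\norm{P_k} = O(R^k)$ with constants uniform over the choice of $G_1, \dots, G_k \in \calG$, completing the proof. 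Minor care is needed if one of $R_i$ equals $0$, but this case is trivial since the corresponding diagonal products vanish for sufficiently large $k$.
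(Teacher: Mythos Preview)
Your proposal is correct and follows essentially the same approach as the paper: both compute the explicit block form of the product (with the same formula for the $(1,2)$-block), bound the off-diagonal sum $\sum_{j=1}^k R_1^{j-1}R_2^{k-j}$ via a geometric series using $R_1\neq R_2$, and treat the degenerate case $R_i=0$ separately. Your remark explaining why $R_1=R_2$ would cost an extra factor of~$k$ is a nice addition.
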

\begin{proof}
  As both the assumption and the statement of the lemma do not depend on the
  particular norm by the norm equivalence theorem, it suffices to consider
  the maximum norm on vectors and thus the row sum norm on matrices.

  It is easily shown by induction on $k$ that
  \begin{equation*}
    G_1\ldots G_k =
    \begin{pmatrix}
      G^{(11)}_1\ldots G^{(11)}_k &
      \sum_{j=1}^{k}\bigl(G^{(11)}_1\ldots G^{(11)}_{j-1}\bigr) G^{(12)}_j \bigl(G^{(22)}_{j+1}\ldots G^{(22)}_k\bigr)\\
      0 &
      G^{(22)}_1\ldots G^{(22)}_k
    \end{pmatrix}.
  \end{equation*}
  If one of $R_1$, $R_2$ equals zero, all except at most one of the upper right
  summands vanish and the result follows easily.

  Otherwise, assume without loss of generality that $R_1<R_2$. Then
  \begin{align*}
    \norm[\bigg]{\sum_{j=1}^{k}\bigl(G^{(11)}_1\ldots G^{(11)}_{j-1}\bigr)
      G^{(12)}_j \bigl(G^{(22)}_{j+1}\ldots G^{(22)}_k\bigr)}&=
    O\biggl(\sum_{j=1}^{k} R_1^{j-1}R_2^{k-j}\biggr)\\&=
    O\biggl(R_2^{k} \sum_{j=1}^{k} \Bigl(\frac{R_1}{R_2}\Bigr)^j\biggr)
    =O(R_2^{k})
    \end{align*}
  as $k \to \infty$, because the last sum can be estimated from above by a bounded geometric
  series.

  Adding the contributions of the rows leads to the result.
\end{proof}

We are now able to prove
Lemma~\ref{lemma:simple-growth-property-block-triangular-matrices}.

\begin{proof}[Proof of Lemma~\ref{lemma:simple-growth-property-block-triangular-matrices}]
  As the joint spectral radius as well as the simple growth property do not
  depend on the choice of the norm, we only consider the row sum norm in this
  proof.

  By definition of the joint spectral radius, for every choice of
  constants $R_i>\rho(\calG^{(i)})$,
  \begin{equation*}
    \norm{G^{(ii)}_1\ldots G^{(ii)}_k}=O(R_i^k)
  \end{equation*}
  holds for $G_1$, \ldots, $G_k\in\calG$, $1\le i\le s$ and $k\to\infty$ (where
  the implicit constant depends on the $R_i$).
  If $\calG^{(i)}$ has the simple growth property, we may also choose $R_i=\rho(\calG^{(i)})$.
  Without loss of generality, we can choose the $R_i$ to be pairwise distinct.
  Then repeated application of
  Lemma~\ref{lemma:growth-block-triangular-matrices} shows that
  \begin{equation}\label{eq:proof:simple-growth-property-block-triangular-matrices:norm-of-products-bounded}
    \norm{G_1\ldots G_k} = O((\max_{1\le i\le s}R_i)^k)
  \end{equation}
  holds for all $G_1$, \dots, $G_k\in\mathcal{G}$ and $k\to\infty$.
  Consequently, we obtain $\rho(\calG)\le
  \max_{1\le i\le s}R_i$. Taking the infimum over all possible choices of $R_i$
  shows that $\rho(\calG)\le
  \max_{1\le i\le s}\rho(\calG^{(i)})$. As the diagonal blocks of the product
  of block triangular matrices are the products of the corresponding diagonal
  blocks, it follows that $\rho(\calG)\ge
  \max_{1\le i\le s}\rho(\calG^{(i)})$.

  If there is a unique $i_0\in\{1, \ldots, s\}$ such that
  $\rho(\calG^{(i_0)})=\rho(\calG)$ and $\calG^{(i_0)}$ has the simple growth
  property, we can choose $R_{i_0}=\rho(\calG^{(i_0)})$ and $R_i<\rho(\calG^{(i_0)})$ for $i\neq i_0$
  and the simple growth property of $\calG$ follows
  from~\eqref{eq:proof:simple-growth-property-block-triangular-matrices:norm-of-products-bounded}.
\end{proof}

\begin{proof}[Proof of Proposition~\ref{prop:jsr-A}]
  As the matrices $J_r$ are lower triangular matrices with diagonal elements
  $0$ and $1$,
  Lemma~\ref{lemma:simple-growth-property-block-triangular-matrices} (or
  Jungers~\cite[Proposition~1.5]{Jungers:2009:joint-spectral-radius}) implies
  that $\rho(\mathcal{J})\le 1$. As $J_0$ has a one on the diagonal, we
  actually have $\rho(\mathcal{J})=1$.

  Since all matrices $\widetilde{A}_{0}$, \dots, $\widetilde{A}_{q-1}$
  are upper triangular block matrices, we can
  apply Lemma~\ref{lemma:simple-growth-property-block-triangular-matrices} (or Jungers~\cite[Proposition~1.5]{Jungers:2009:joint-spectral-radius})
  once more to obtain the first part
  of~\eqref{eq:prop-jsr-A-1}.
  The ``in particular''-statement follows directly
  from~\eqref{eq:prop-jsr-A-1}.

  Finally, Lemma~\ref{lemma:simple-growth-property-block-triangular-matrices}
  shows that $\widetilde{\mathcal{A}}$ has the simple growth property under the
  assumptions stated in the last part of Proposition~\ref{prop:jsr-A}.
\end{proof}

\begin{proof}[Proof of Proposition~\ref{cor:sprectrum-C-Bs}]
  Observe that we have
  \begin{equation*}
    C =
    \begin{pmatrix}
      J_{00} + \cdots + J_{(q-1)0} & J_{01} + \cdots + J_{(q-1)1}\\
      0 & B_{0} + \cdots + B_{q-1}
    \end{pmatrix}.
  \end{equation*}
  So $C$ is an upper triangular block matrix, which implies
  \begin{equation*}
    \sigma(C) = \sigma(J_{00} + \cdots + J_{(q-1)0}) \cup\sigma(B_{0} + \cdots
    + B_{q-1}).
  \end{equation*}
  By Theorem~\ref{prop:recursive-special-case},
  $J_{00} + \cdots + J_{(q-1)0}$ is a triangular matrix with zero diagonal,
  so its only eigenvalue is~$0$, which yields the result.
\end{proof}

\begin{proof}[Proof of Proposition~\ref{cor:jsr-special-case}]
  The statement $\rho(\mathcal{J}) = 0$ follows by
  Lemma~\ref{lemma:simple-growth-property-block-triangular-matrices} and the
  fact that every matrix in~$\mathcal{J}$ is an upper triangular matrix with zeros on the diagonal. The
  rest of the proposition can be proven in analogy to
  Proposition~\ref{prop:jsr-A}.
\end{proof}

\subsection{Proof of the Functional Equation in the Special Case}

\begin{proof}[Proof of Proposition~\ref{prop:dirichlet}]
  For $0\leq j < q^{m}$ and $\Re s>\log_q \rho +1$, replacing $n$ by $qn+\mu$ yields
  \begin{align*}
    \mathcal{X}_{j}(s) &= \sum_{n\geq \eta}\frac{x(q^{m}n + j)}{(q^{m}n + j)^{s}}\\
                       &= \sum_{\mu =0}^{q-1}\;\sum_{n\geq \eta}\frac{x(q^{m+1}n +
                         \mu q^{m} + j)}{(q^{m+1}n + \mu q^{m} + j)^{s}} + \overbrace{\sum_{\eta\leq n < q\eta}\frac{x(q^{m}n + j)}{(q^{m}n + j)^{s}}}^{\eqqcolon \sigma}.\\
  \intertext{We now use \eqref{eq:def-q-recursive} and obtain}
    \mathcal{X}_{j}(s)&= \sum_{\mu =0}^{q-1}\;\sum_{n\geq \eta}\frac{\sum_{k=0}^{q^{m}-1}c_{\mu q^{m} +
      j,k}\,x(q^{m}n + k)}{(q^{m+1}n + \mu q^{m} + j)^{s}} + \sigma\\
                       &= q^{-s}\sum_{\mu =0}^{q-1}\;\sum_{k=0}^{q^{m}-1}c_{\mu q^{m} +
                         j,k}\sum_{n\geq \eta}\frac{x(q^{m}n +
                         k)}{\bigl(q^{m}n + \frac{\mu q^{m} + j}{q}\bigr)^{s}} + \sigma\\
                       &= q^{-s}\sum_{k=0}^{q^{m}-1}\Bigl(\sum_{\mu =0}^{q-1}c_{\mu q^{m} +
                         j,k}\Bigr)\mathcal{X}_{k}(s) + \mathcal{Y}_{j}(s),
  \end{align*}
  where $\mathcal{Y}_{j}(s)$ is given by
  \begin{equation}
    \label{eq:ub-functional-eqation-rho}
    \mathcal{Y}_{j}(s) =
    q^{-s}\sum_{k=0}^{q^{m}-1}\;\sum_{\mu =0}^{q-1}c_{\mu q^{m}+j,k}\biggl(\sum_{n\geq
      \eta}\frac{x(q^{m}n + k)}{\bigl(q^{m}n + \frac{\mu q^{m} + j}{q}\bigr)^{s}} - \mathcal{X}_{k}(s)\biggr) + \sigma.
  \end{equation}
  Note that $\sum_{k=0}^{q^{m}-1}\sum_{\mu =0}^{q-1}c_{\mu q^{m} +
                         j,k}\mathcal{X}_{k}(s)$ is the $j$th row of $(B_0+\cdots +B_{q-1})\mathcal{X}(s)$.
  It is easy to see that
  \begin{equation*}
    \abs[\Big]{\frac{\mu q^{m} + j}{q} - k} < q^{m} \leq q^{m}\eta + k
  \end{equation*}
  holds for all $0\leq \mu < q$, $0\leq j < q^{m}$ and $0\leq k < q^{m}$. Consequently, we
  apply~\cite[Lemma~6.3]{Heuberger-Krenn:2018:asy-regular-sequences}
  with $n_{0} = q^{m}\eta + k$ (see~\eqref{eq:dirichlet-special-case}) as well as $\beta = \frac{\mu q^{m}+j}{q} - k$ and obtain
  \begin{equation}
    \label{eq:proof-prop-dirichlet:rhs-analytic}
    \sum_{n\geq \eta}\frac{x(q^{m}n + k)}{\bigl(q^{m}n + \frac{\mu q^{m} + j}{q}\bigr)^{s}} - \mathcal{X}_{k}(s)
    = \sum_{n \geq 1}\binom{-s}{n}\Bigl(\frac{\mu q^{m} + j}{q} - k\Bigr)^{n}\mathcal{X}_{k}(s + n),
  \end{equation}
  again for all $0\leq \mu < q$, $0\leq j < q^{m}$ and $0\leq k < q^{m}$. Furthermore, the
  right-hand side of~\eqref{eq:proof-prop-dirichlet:rhs-analytic}
  is analytic for $\Re s > \log_{q}\rho$. Plugging \eqref{eq:proof-prop-dirichlet:rhs-analytic}
  into~\eqref{eq:ub-functional-eqation-rho} and reordering
  terms yields the result.
  The functional equation~\eqref{eq:prop-functional-equation} implies that $\mathcal{X}_j(s)$
  is meromorphic for $\Re s>\log_q\rho$ and can only have poles where
  $q^{s}\in\sigma(B_{0} + \cdots + B_{q-1})$.
\end{proof}



\newcommand{\MR}[1]{}
\bibliography{cheub-bib/cheub}
\bibliographystyle{bibstyle/amsplainurl}

\end{document}
